\documentclass[12pt]{article}

\usepackage{graphicx, amsmath, amssymb, amsthm, multicol, bbold, float, subcaption,verbatim}
\usepackage[utf8x]{inputenc}
\usepackage{epstopdf}

\usepackage{cite}

\usepackage{color}   
\usepackage{hyperref}
\hypersetup{
	hidelinks,
    linktoc=all     
}

\newtheorem{definition}{Definition}[section]
\newtheorem{theorem}[definition]{Theorem}
\newtheorem{lemma}[definition]{Lemma}
\newtheorem{corollary}[definition]{Corollary}
\newtheorem{proposition}[definition]{Proposition}
\theoremstyle{definition}

\newtheorem{example}[definition]{Example}

\numberwithin{equation}{section}


\addtolength{\oddsidemargin}{-.875in}
\addtolength{\evensidemargin}{-.875in}
\addtolength{\textwidth}{1.75in}

\addtolength{\topmargin}{-.875in}
\addtolength{\textheight}{1.75in}

\newcommand{\cl}{\operatorname{cl}}
\newcommand{\interior}{\operatorname{int}}
\newcommand{\conv}{\operatorname{conv}}
\newcommand{\cone}{\operatorname{cone}}
\newcommand{\proj}{\operatorname{proj}}

\DeclareMathOperator*{\argmin}{arg\,min}
\newcommand{\trans}[1]{#1^{\mathsf{T}}}

\begin{document}
\title{Convex Projection and Convex Multi-Objective Optimization}
\author{Gabriela Kov\'a\v{c}ov\'a\thanks{Vienna University of Economics and Business, Institute for Statistics and Mathematics, Vienna A-1020, AUT, gabriela.kovacova@wu.ac.at and birgit.rudloff@wu.ac.at.} \and Birgit Rudloff\footnotemark[1] 
}
\maketitle
\abstract{
In this paper we consider a problem, called convex projection, of projecting a convex set onto a subspace. We will show that to a convex projection one can assign a particular multi-objective convex optimization problem, such that the solution to that problem also solves the convex projection (and vice versa), which is analogous to the result in the polyhedral convex case considered in~\cite{LW16}. In practice, however, one can only compute approximate solutions in the (bounded or self-bounded) convex case, which solve the problem up to a given error tolerance. We will show that for approximate solutions a similar connection can be proven, but the tolerance level needs to be adjusted. That is, an approximate solution of the convex projection solves the multi-objective problem only with an increased error. Similarly, an approximate solution of the multi-objective problem solves the convex projection with an increased error. In both cases the tolerance is increased proportionally to a multiplier. These multipliers are deduced and shown to be sharp.
These results allow to compute approximate solutions to a convex projection problem by computing approximate solutions to the corresponding multi-objective convex optimization problem, for which algorithms exist in the bounded case.

For completeness, we will also investigate the potential generalization of the following result to the convex case. In~\cite{LW16}, it has been shown for the polyhedral case, how to construct a polyhedral projection associated to any given vector linear program and how to relate their solutions. This in turn yields an equivalence between polyhedral projection, multi-objective linear programming and vector linear programming. We will show that only some parts of this result can be generalized to the convex case, and discuss the limitations.
\\[.2cm]
{\bf Keywords and phrases:} convex projection, convex vector optimization, convex multi-objective optimization
\\[.2cm]
{\bf Mathematics Subject Classification (2020):} 52A20, 90C29, 90C25
}

\section{Introduction}
Let us start with a short motivation, describing a field of research where convex projection problems, the main object of this paper, arise.
A dynamic programming principle, called a set-valued Bellman principle, for a particular vector optimization problem has been proposed in~\cite{KR21}, and has also been applied in~\cite{KRC20,FRZ21} to other multivariate problems. Often, dynamic optimization problems depend on some parameter, in the above examples the initial capital. Then, the dynamic programming principle leads to a sequence of parametrized vector optimization problems, that need to be solved recursively backwards in time. A parametrized vector optimization problem can equivalently be written as a projection problem. If the problem happens to be polyhedral, the theory of \cite{LW16} can be applied, which allows to rewrite the polyhedral projection as a particular multi-objective linear program, and thus, known solvers can be used to solve the original parametrized vector optimization problem for all values of the parameter. This method has been used e.g. in~\cite{KRC20}. However, in general the problem is convex, and not necessarily polyhedral, and thus leads to a convex projection problem.

The aim of this paper is to develop the needed theory to treat these problems. That is, to define the problem of convex projection, find appropriate solution concepts and, in analogy to the polyhedral case, relate those to a multi-objective convex problem, for which, at least in the bounded case, solvers are already available that compute approximate solutions, see~\cite{ESS11, LRU14, DLSW21}. It is left for future research to develop algorithms to solve also unbounded multi-objective convex problems, which would, by the results of this paper, also allow to solve certain unbounded convex projection problems. This, in turn, would then enable to solve the above mentioned problems arising from multivariate dynamic programming as they are by construction unbounded. Thus, the present research can be seen as a first step into that direction, but might also be of independent interest.

Let us start with a short description of the known results in the polyhedral case.
Polyhedral projection is a problem of projecting a polyhedral convex set, given by a finite collection of linear inequalities, onto a subspace. This problem is considered in~\cite{LW16}, where an equivalence between polyhedral projection, multi-objective linear programming and vector linear programming is proven. 

The following problem is called a \textit{polyhedral projection:} 
\begin{align*}
\text{compute } Y = \{ y \in \mathbb{R}^m \; \vert \; \exists x \in \mathbb{R}^n: \; Gx + Hy \geq h \}, 
\end{align*}
where the matrices $G \in \mathbb{R}^{k \times n}, H \in \mathbb{R}^{k \times m}$ and a vector $h \in \mathbb{R}^k$ define a polyhedral feasible set $S = \{ (x,y) \in \mathbb{R}^n \times \mathbb{R}^m \; \vert \; Gx + Hy \geq h \}$ to be projected. To the above polyhedral projection corresponds a multi-objective linear program
\begin{align*}
\min \begin{pmatrix}
y \\ -\trans{\mathbf{1}} y
\end{pmatrix} \text{ with respect to } \leq_{\mathbb{R}^{m+1}_+} \text{ subject to } Gx + Hy \geq h.
\end{align*}
In~\cite{LW16} it is proven that every solution of the associated multi-objective linear program is also a solution of the polyhedral projection 
(and vice versa)
and, moreover, a solution exists whenever the polyhedral projection is feasible. This enables to solve polyhedral projection problems using the existing solvers for multi-objective (vector) linear programs such as~\cite{LW17}. Furthermore,~\cite{LW16} shows how to construct a polyhedral projection associated to any given vector linear program. A solution of the given vector linear program can then, under some assumptions, be recovered from a solution of the associated polyhedral projection. This yields the aforementioned equivalence between polyhedral projection, multi-objective linear programming and vector linear programming.

In this paper we are interested in a projection problem, where the feasible set $S \subseteq \mathbb{R}^n \times \mathbb{R}^m$ is convex, but not necessarily polyhedral. Our problem, which we call a \textit{convex projection}, is of the form 
\begin{align}
\label{CP}
\tag{CP}
\text{compute } Y  = \{ y \in \mathbb{R}^m \; \vert \; \exists x \in \mathbb{R}^n: \; (x, y) \in S  \}.
\end{align}
Under computing the set $Y$ we understand computing a collection of feasible points that (exactly or approximately) span the set $Y$.
The question we want to answer is whether there exists a similar connection between convex projection, multi-objective convex optimization and convex vector optimization as there is in the linear (polyhedral) case. Inspired by~\cite{LW16}, we consider the multi-objective convex problem 
\begin{align}
\label{MOCP-1}
\min 
\begin{pmatrix}
y \\
- \trans{\mathbf{1}} y
\end{pmatrix} \text{ with respect to } \leq_{\mathbb{R}^{m+1}_+} \text{ subject to } (x, y) \in S.
\end{align}
Just as in the polyhedral case, there is a close link between the problems~\eqref{CP} and~\eqref{MOCP-1}. Relating solutions of the two problems, however, becomes somehow more involved. Namely, we need to make clear with which solution concept we work and also consider the boundedness of the problem. The notion of an exact solution does exist, but it is mostly a theoretical concept as it usually does not consist of finitely many points. In practice, instead, one obtains approximate solutions (i.e.~finite $\epsilon$-solutions), which solve the problem up to a given error tolerance. We consider exact solutions, as well as approximate solutions of bounded and self-bounded problems. Precise definitions will be given in the following sections. The results for exact solutions (which does not assume (self-)boundedness) are parallel to those for the linear case considered in~\cite{LW16}, we obtain an equivalence between exact solutions of the convex projection~\eqref{CP} and the associated multi-objective problem~\eqref{MOCP-1}. For approximate solutions (of bounded or self-bounded problems) a connection also exists, but the tolerance needs to be adjusted. That is, an approximate solution of the convex projection solves the multi-objective problem only with an increased error. Similarly, an approximate solution of the multi-objective problem solves the convex projection with an increased error. In both cases the tolerance is increased proportionally to a multiplier that is roughly equal to the dimension of the problem.

A problem similar to~\eqref{CP} was considered in~\cite{SZC18}, where an outer approximation Benson algorithm was adapted to solve it under some compactness assumptions. Here, we take a more theoretical approach to relate the different problem classes. 
For this reason we also consider a general convex vector optimization problem. There does exist an associated convex projection, which spans (up to a closure) the upper image of the vector problem. However, instead of having a correspondence between solutions of the two problems, we can only connect (exact or approximate) solutions of the associated projection to (exact or approximate) infimizers of the vector optimization.

The paper is organized as follows: Section~\ref{sec_notation} contains preliminaries and introduces the notation, Section~\ref{sec_CVOP} summarizes the solution concepts and properties of a convex vector optimization problem. In Section~\ref{sec_convexprojection}, we define a convex projection and introduce the corresponding solution concepts. Section~\ref{sec_CP_MOCP} contains the main results, the connection between the convex projection and an associated multi-objective problem is established. Section~\ref{sec_CVOP_CP} formulates a convex projection corresponding to a given convex vector optimization problem. Finally, in Section~\ref{appendix_DLSW21} the derived results are considered under a slightly different solution concept based on the Hausdorff distance, which was recently proposed in~\cite{DLSW21}.

\section{Preliminaries}
\subsection{Notation}

\label{sec_notation}

For a set $A \subseteq \mathbb{R}^m$ we denote by $\conv A, \cone A, \cl A$ and $\interior A$ the convex hull, the conical hull, the closure and the interior of $A$, respectively. The recession cone of the set $A \subseteq \mathbb{R}^m$ is  $A_{\infty} = \{ y \in \mathbb{R}^m \; \vert \; \forall x \in A, \lambda \geq 0 : \; x + \lambda y \in A \}$.  
A cone $C \subseteq \mathbb{R}^m$ is \textit{solid} if it has a non-empty interior, it is \textit{pointed} if $C \cap -C = \{0\}$ and it is \textit{non-trivial} if $\{0\} \subsetneq C \subsetneq \mathbb{R}^m$. A non-trivial pointed convex cone $C\subseteq \mathbb{R}^m$ defines a partial ordering on $\mathbb{R}^m$ via $q^{(1)} \leq_C q^{(2)}$ if and only if $q^{(2)} - q^{(1)} \in C$. Given a non-trivial pointed convex cone $C\subseteq \mathbb{R}^m$ and a convex set $\mathcal{X} \subseteq \mathbb{R}^n$, a function $\Gamma: \mathcal{X} \rightarrow \mathbb{R}^m$ is $C$-convex, if for all $x^{(1)}, x^{(2)} \in \mathcal{X}$ and $\lambda \in [0,1]$ it holds $\Gamma(\lambda x^{(1)} + (1-\lambda) x^{(2)}) \leq_C \lambda \Gamma (x^{(1)}) + (1-\lambda) \Gamma(x^{(2)})$. The Minkowski sum of two sets $A, B \subseteq \mathbb{R}^m$ is, as usual, denoted by $A + B = \{ a + b \; \vert \; a \in A, b \in B\}$. As a short hand notation we will denote the Minkowski sum of a set $A \subseteq \mathbb{R}^m$ and the negative of a cone $C \subseteq \mathbb{R}^m$ by $A - C = A + (-C)$.

Throughout the paper we fix $p \in [1, \infty]$ and denote by $\Vert \cdot \Vert$ the $p$-norm and by $B_{\epsilon} = \{ z \in \mathbb{R}^m \; \vert \; \Vert z \Vert \leq \epsilon \}$ the closed $\epsilon$-ball around the origin in this norm. The standard basis vectors are denoted by $e^{(1)}, \dots, e^{(m)}$. In the following, we work with Euclidean spaces of various dimensions, mainly with $\mathbb{R}^m$ and $\mathbb{R}^{m+1}$. In order to keep the notation as simple as possible we do not explicitly denote the dimensions of most vectors (e.g.~$e^{(1)}$) or sets (e.g.~$B_{\epsilon}$), as they should be clear from the context. One exception is the vector of ones, where we denote $\mathbf{1} \in \mathbb{R}^{m}$ and $\mathbb{1} \in \mathbb{R}^{m+1}$ to avoid any confusion.

We use three projection mappings, $\proj_y: \mathbb{R}^{n} \times \mathbb{R}^m \rightarrow \mathbb{R}^{m}, \proj_x: \mathbb{R}^{n} \times \mathbb{R}^m \rightarrow \mathbb{R}^{n}$ and $\proj_{-1}: \mathbb{R}^{m+1} \rightarrow \mathbb{R}^{m}$. The mapping $\proj_y$, given by the matrix $\proj_y = \begin{pmatrix}
0 & I
\end{pmatrix},$ projects a vector $(x, y) \in \mathbb{R}^{n} \times \mathbb{R}^m$ onto $y \in \mathbb{R}^m$. The mapping $\proj_x$, given by the matrix $\proj_x = \begin{pmatrix}
I & 0
\end{pmatrix},$ projects a vector $(x, y) \in \mathbb{R}^{n} \times \mathbb{R}^m$ onto $x \in \mathbb{R}^n$. The mapping $\proj_{-1}$ drops the last element of a vector from $\mathbb{R}^{m+1}$.

\subsection{Convex Vector Optimization Problem}
\label{sec_CVOP}

In this section we recall the definition of a convex vector optimization problem, its properties and solution concepts from~\cite{LRU14, U18}, which are adopted within this work, and which are based on the lattice approach to vector optimization~\cite{L11}. 

A convex vector optimization problem is
\begin{align}
\label{CVOP}
\tag{CVOP}
\min \Gamma (x) \text{ with respect to } \leq_C \text{ subject to } x \in \mathcal{X},
\end{align}
where $C \subseteq \mathbb{R}^m$ is a non-trivial, pointed, solid, convex ordering cone, $\mathcal{X} \subseteq \mathbb{R}^n$ is a convex set and the objective function $\Gamma: \mathcal{X} \rightarrow \mathbb{R}^m$ is $C$-convex. The convex feasible set $\mathcal{X}$ is usually specified via a collection of convex inequalities. 
A convex vector optimization problem is called a multi-objective convex problem if the ordering cone is the natural ordering cone, i.e. if $C=\mathbb{R}^m_+$. A particular multi-objective convex problem that helps in solving a convex projection problem will be considered in Section~\ref{sec_CP_MOCP}.
The general convex vector optimization problem and its connection to convex projections will be treated in Section~\ref{sec_CVOP_CP}.

The image of the feasible set $\mathcal{X}$ is defined as $\Gamma[\mathcal{X}] = \{ \Gamma(x) \; \vert \; x \in\mathcal{X} \}$. The closed convex set 
\begin{align*}
\mathcal{G} = \cl ( \Gamma[\mathcal{X}] + C )
\end{align*}
is called the \textbf{upper image} of~\eqref{CVOP}.
A feasible point $\bar{x} \in \mathcal{X}$ is a \textbf{minimizer} of~\eqref{CVOP} if it holds $(\{\Gamma (\bar{x})\} - C \backslash \{0\}) \cap \Gamma[\mathcal{X}] = \emptyset$. It is a \textbf{weak minimizer} if $(\{\Gamma (\bar{x})\} - \interior C) \cap \Gamma[\mathcal{X}] = \emptyset$.

\begin{definition}[see~\cite{U18}]
\label{def_CVOP_bounded}
The problem~\eqref{CVOP} is \textbf{bounded} if for some $q \in \mathbb{R}^m$ it holds $\mathcal{G} \subseteq \{q\} + C$. If~\eqref{CVOP} is not bounded, it is called \textbf{unbounded}.  The problem \eqref{CVOP} is \textbf{self-bounded} if $\mathcal{G} \neq \mathbb{R}^m$ and for some $q \in \mathbb{R}^m$ it holds $\mathcal{G} \subseteq \{q\} + \mathcal{G}_{\infty}$. 
\end{definition}
Self-boundedness is related to the tractability of the problem, i.e., to the existence of polyhedral inner and outer approximations to the Pareto frontier of~\eqref{CVOP} such that the Hausdorff distance between the two is finite, see~\cite{U18}. Self-boundedness allows to turn an unbounded problem into a bounded one by replacing the ordering cone $C$ by $\mathcal{G}_{\infty}$.
One can notice that a bounded problem is a special case of a self-bounded one. The following relationship holds.
\begin{lemma}
\label{lemma_self-bd}
A self-bounded~\eqref{CVOP} is bounded if and only if $\mathcal{G}_{\infty} = \cl C$.
\end{lemma}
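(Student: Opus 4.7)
The plan is to prove the two inclusions for $\mathcal{G}_\infty$ versus $\cl C$ separately and then handle the shift from $\cl C$ to $C$ in the ``only if'' direction using solidity of $C$.

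First I would record a fact that holds independently of (self-)boundedness: $\cl C \subseteq \mathcal{G}_\infty$. Since $\mathcal{G} = \cl(\Gamma[\mathcal X] + C)$ and $C + C \subseteq C$, a short limit argument shows $\mathcal{G} + C \subseteq \mathcal{G}$; as $\mathcal{G}$ is nonempty, closed, and convex, $C$ is contained in its recession cone, and therefore so is $\cl C$. This half is needed in both directions of the equivalence.

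For the implication $\mathcal{G}_\infty = \cl C \Rightarrow$ bounded, I would start from the self-boundedness assumption $\mathcal{G} \subseteq \{q\} + \mathcal{G}_\infty = \{q\} + \cl C$. Because the ordering cone $C$ in \eqref{CVOP} is solid, I can pick $c \in \interior C$. Using the standard fact that $\cl C + \interior C \subseteq \interior C \subseteq C$ for a convex cone, I obtain $\{q\} + \cl C \subseteq \{q - c\} + C$, so $\mathcal{G} \subseteq \{q-c\} + C$, which is exactly boundedness with the shifted reference point.

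For the implication bounded $\Rightarrow \mathcal{G}_\infty = \cl C$, I first note that bounded trivially implies self-bounded (pointedness of $C$ gives $\{q\} + C \neq \mathbb{R}^m$, so $\mathcal{G} \neq \mathbb{R}^m$, and $C \subseteq \mathcal{G}_\infty$ gives $\mathcal{G} \subseteq \{q\} + C \subseteq \{q\} + \mathcal{G}_\infty$). It remains to show the reverse inclusion $\mathcal{G}_\infty \subseteq \cl C$. Fix any $g_0 \in \mathcal{G}$ and any $y \in \mathcal{G}_\infty$. Then $g_0 + \lambda y \in \mathcal{G} \subseteq \{q\} + C$ for all $\lambda \geq 0$, so $g_0 - q + \lambda y \in C$. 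Dividing by $\lambda > 0$ and using that $C$ is a cone yields $(g_0 - q)/\lambda + y \in C$; letting $\lambda \to \infty$ gives $y \in \cl C$.

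The argument is mostly bookkeeping with the definitions; the one step that needs a moment of care is the shift in the ``if'' direction, which is the only place solidity of $C$ enters and is the reason a self-bounded problem with $\mathcal{G}_\infty = \cl C$ really is bounded in the strict sense of $\{q\} + C$ and not just in the weaker sense of $\{q\} + \cl C$.
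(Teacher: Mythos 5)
Your proposal is correct and follows essentially the same route as the paper's proof: both directions reduce to the two inclusions $\cl C \subseteq \mathcal{G}_\infty$ and (under boundedness) $\mathcal{G}_\infty \subseteq \cl C$, and the ``if'' direction is completed by the same shift $\cl C \subseteq -\delta c + C$ for $c \in \interior C$, which you phrase as $\cl C + \interior C \subseteq \interior C$. You merely spell out the limit arguments that the paper asserts without detail.
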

\begin{proof}
Boundedness of~\eqref{CVOP} implies $\mathcal{G}_{\infty} \subseteq \cl C$, together with $ \cl C\subseteq\mathcal{G}_{\infty}$ one obtains $\mathcal{G}_{\infty} = \cl C$.
For the reverse, note that self-boundedness and $\mathcal{G}_{\infty} = \cl C$ imply $\mathcal{G} \subseteq \{q\} + \cl C$. To prove the boundedness of~\eqref{CVOP}, one needs to show that also $\mathcal{G} \subseteq \{\bar{q}\} + C$ for some $\bar{q} \in \mathbb{R}^m$. This follows from $\cl C \subseteq \{-\delta c\} + C$ for arbitrary $\delta >0$ and $c \in \interior C$, which holds as $C$ is a solid convex cone. Thus one can set $\bar{q} = q-\delta c$.
  \end{proof}
Boundedness of the problem~\eqref{CVOP} is defined as boundedness of the upper image $\mathcal{G}$ with respect to the ordering cone $C$. According to the above proof, for a solid cone $C$ boundedness of $\mathcal{G}$ with respect to $C$, with respect to $\cl C$ and with respect to $\interior C$ are equivalent.

(Self-)boundedness is important for the definition of approximate solutions.
In the following, a normalized direction $c \in \interior C$, i.e.~$\Vert c \Vert = 1$, and a tolerance $\epsilon > 0$ are fixed.
\begin{definition}[see~\cite{LRU14, U18}]
\label{def_CVOP_sol}
A set $\bar{X} \subseteq \mathcal{X}$ is an \textbf{infimizer} of~\eqref{CVOP} if it satisfies 
\begin{align*}
\mathcal{G} = \cl \conv (\Gamma [\bar{X}] + C ).
\end{align*}
An infimizer $\bar{X}$ of~\eqref{CVOP} is called a \textbf{(weak) solution} if it consists of (weak) minimizers only.
A nonempty finite set $\bar{X} \subseteq \mathcal{X}$ is a \textbf{finite $\epsilon$-infimizer} of a bounded problem~\eqref{CVOP} if
\begin{align}
\label{eq16}
\mathcal{G} \subseteq \conv \Gamma [\bar{X}] + C -\epsilon \{c\}.
\end{align}
A nonempty finite set $\bar{X} \subseteq \mathcal{X}$ is a \textbf{finite $\epsilon$-infimizer} of a self-bounded problem~\eqref{CVOP} if 
\begin{align}
\label{eq17}
\mathcal{G} \subseteq \conv \Gamma [\bar{X}] + \mathcal{G}_{\infty} -\epsilon \{c\}.
\end{align}
A finite $\epsilon$-infimizer $\bar{X}$ of~\eqref{CVOP} is called a \textbf{finite (weak) $\epsilon$-solution} if it consists of (weak) minimizers only.
\end{definition}

There are algorithms such as~\cite{ESS11, LRU14, DLSW21} for finding finite $\epsilon$-solutions of a bounded \eqref{CVOP} (under some assumptions, e.g.~compact feasible set, continuous objective). The definition of a finite $\epsilon$-solution in the self-bounded case is more of a theoretical concept, as it assumes that the recession cone of the upper image is known. Nevertheless, we consider also this case, as it can be treated jointly with the bounded case. To the best of our knowledge, there is, so far, no algorithm for solving an unbounded~\eqref{CVOP}, which also means, there is no definition yet of an approximate solution in the not self-bounded case.

The definition of a finite $\epsilon$-solution in~\cite{DLSW21} differs slightly from the one used here that is based on~\cite{LRU14}. 
However, the structure of the main results (Theorems~\ref{thm_CPsolMOCP},~\ref{thm_MOCPsolCP},~\ref{thm_sol_CVOP1} and~\ref{thm_sol_CVOP2}) remain valid using the definition from~\cite{DLSW21} under some minor adjustments of some details, which are given in Section~\ref{appendix_DLSW21}. 

In this paper we will often refer to a solution as an exact solution and to a finite $\epsilon$-solution as an approximate solution.

\section{Convex Projections}
\subsection{Definitions}
\label{sec_convexprojection}
The object of interest is the convex counterpart of the polyhedral projection, that is, the problem of projecting a convex set onto a subspace. Thus, let a convex set  $S \subseteq \mathbb{R}^n \times \mathbb{R}^m$ be given. Just as the feasible set $\mathcal{X}$ of~\eqref{CVOP}, it could be specified via a collection of convex inequalities. The aim is to project the feasible set $S$ onto its $y$-component, that is to  
\begin{align}
\tag{\ref{CP}}
\text{compute } Y  = \{ y \in \mathbb{R}^m \; \vert \; \exists x \in \mathbb{R}^n: \; (x, y) \in S  \}.
\end{align}

First, we introduce the properties of boundedness and self-boundedness for the convex projection problem.
\begin{definition}
\label{def_CP_bounded}
The convex projection~\eqref{CP} is called \textbf{bounded} if the set $Y$ is bounded, i.e.~$Y \subseteq B_K$ for some $K > 0$. The convex projection~\eqref{CP} is called \textbf{unbounded} if it is not bounded. The convex projection~\eqref{CP} is called \textbf{self-bounded} if $Y \neq \mathbb{R}^m$ and there exist finitely many points $y^{(1)}, \dots, y^{(k)} \in \mathbb{R}^m$ such that
\begin{align}
\label{eq12}
Y \subseteq \conv \{ y^{(1)}, \dots, y^{(k)} \} + (\cl Y)_{\infty}.
\end{align}
\end{definition}
The reader can justifiably question the differences between Definition~\ref{def_CP_bounded} for the projection problem and Definition~\ref{def_CVOP_bounded} for the vector optimization problem. 
As far as bounded problems are concerned, these differences are intuitively reasonable: Each of the two problems is represented by a set, \eqref{CVOP} by the upper image $\mathcal{G}$ and \eqref{CP} by the set $Y$. The upper image $\mathcal{G}$ is a so-called closed upper set with respect to the ordering cone $C$. Boundedness of the problem~\eqref{CVOP}, therefore, corresponds to boundedness of $\mathcal{G}$ with respect to the ordering cone, in particular as an upper set cannot be topologically bounded. On the other hand, the set $Y$ is not an upper set and it can be bounded in the usual topological sense. Furthermore, as no ordering cone was specified to define the convex projection, boundedness with respect to the ordering cone is not the appropriate concept here.  The difference for the self-bounded problems is less intuitive. We discuss in detail in Section~\ref{sec_self} why for convex projections finitely many points are necessary, while for convex vector optimization problems it is enough to consider one point in the definition of self-boundedness, which provides a motivation for the definition we proposed here. Finally, we will prove in Proposition~\ref{lemma_self-bounded} below that the definitions of boundedness and self-boundedness in Definition~\ref{def_CP_bounded} for the projection problem and Definition~\ref{def_CVOP_bounded} for a particular  multi-objective convex optimization problem correspond one-to-one to each other.

Now, we give a definition of solutions -- both exact and approximate --  for the convex projection. Since we aim towards relating the projection to vector optimization, we only define approximate solutions for the bounded and the self-bounded case. Let a tolerance level $\epsilon > 0$ be fixed.
\begin{definition}
\label{def_CP_sol}
A set $\bar{S} \subseteq S$  is called a \textbf{solution} of~\eqref{CP} if it satisfies 
\begin{align}
\label{eq4}
Y \subseteq \cl \conv \proj_y [\bar{S}].
\end{align}
A non-empty finite set $\bar{S} \subseteq S$  is called a \textbf{finite $\epsilon$-solution} of a bounded problem~\eqref{CP} if
\begin{align}
\label{eq5}
Y \subseteq \conv \proj_y [\bar{S}] + B_{\epsilon}.
\end{align} 
A non-empty finite set $\bar{S} \subseteq S$  is called a  \textbf{finite $\epsilon$-solution} of a self-bounded problem~\eqref{CP} if
\begin{align}
\label{eq13}
Y \subseteq \conv \proj_{y} [\bar{S}] + (\cl Y)_{\infty} + B_{\epsilon}.
\end{align}
\end{definition}

\subsection{An Associated Multi-Objective Convex Problem}
\label{sec_CP_MOCP}
We are interested in examining the connection between convex projections and convex vector optimization. The linear counterpart suggests that multi-objective optimization might be helpful also for solving the convex projection problem.
Inspired by the polyhedral case of~\cite{LW16} and the case of convex bodies in~\cite{SZC18}, we construct a multi-objective problem with the same feasible set $S$ and one additional dimension of the objective space. That is, we consider the following problem
\begin{align}
\label{MOCP}
\min 
\begin{pmatrix}
y \\
- \trans{\mathbf{1}} y
\end{pmatrix} \text{ with respect to } \leq_{\mathbb{R}^{m+1}_+} \text{ subject to } (x, y) \in S,
\end{align}
which is a multi-objective convex optimization problem with a linear objective function and a convex feasible set $S$. To increase readability we add the following notations. The mappings $P: \mathbb{R}^n \times \mathbb{R}^m \rightarrow \mathbb{R}^{m+1}$ and $Q: \mathbb{R}^{m} \rightarrow \mathbb{R}^{m+1}$ are given by the matrices $P = \begin{pmatrix}0 & I \\ 0 & - \trans{\mathbf{1}} \end{pmatrix}$ and $Q = \begin{pmatrix}I \\ - \trans{\mathbf{1}} \end{pmatrix}$, respectively.
The image of the feasible set of~\eqref{MOCP} is given by $P[S] = \{ P (x,y) \; \vert \; (x,y) \in S \}$,  its upper image is denoted by
$$\mathcal{P} = \cl (P[S] + \mathbb{R}^{m+1}_+).$$
The aim of this section is to establish a connection between the convex projection~\eqref{CP} and the multi-objective problem~\eqref{MOCP}, between their properties and their solutions. Then, a solution provided by a solver for problem~\eqref{MOCP} would lead the way to a solution to problem~\eqref{CP}. The following observations are immediate, most importantly the connection between the sets $P[S]$ and $Y$.
\begin{lemma}
\label{lemma_minimizer}
\begin{enumerate}
\item Every feasible point $(x, y) \in S$ is a minimizer of~\eqref{MOCP}.

\item For the sets $Y$ and $P[S]$ it holds $Y = \proj_{-1} [P[S]]$ and $P[S] = Q[Y].$
\end{enumerate}
\end{lemma}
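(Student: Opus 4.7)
The plan is to handle the two items of the lemma separately, since each is an elementary consequence of the definition of the objective map $P$ of \eqref{MOCP}.

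For item 1, I would fix an arbitrary $(\bar{x}, \bar{y}) \in S$ and verify the minimizer condition $(P(\bar{x}, \bar{y}) - \mathbb{R}^{m+1}_+ \setminus \{0\}) \cap P[S] = \emptyset$ directly. Suppose for contradiction there is some $(x,y) \in S$ with $P(x,y) \leq_{\mathbb{R}^{m+1}_+} P(\bar{x}, \bar{y})$ and $P(x,y) \neq P(\bar{x}, \bar{y})$. Reading off the two blocks of $P$, this says $y \leq \bar{y}$ componentwise and $-\trans{\mathbf{1}} y \leq -\trans{\mathbf{1}} \bar{y}$, i.e.\ $\trans{\mathbf{1}} y \geq \trans{\mathbf{1}} \bar{y}$. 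But summing the componentwise inequality $y \leq \bar{y}$ gives $\trans{\mathbf{1}} y \leq \trans{\mathbf{1}} \bar{y}$, so both sums are equal, which forces $y = \bar{y}$. Consequently also $-\trans{\mathbf{1}} y = -\trans{\mathbf{1}} \bar{y}$, giving $P(x,y) = P(\bar{x}, \bar{y})$, a contradiction. Thus $(\bar{x},\bar{y})$ is a minimizer. This is the same observation as that a vector with only one free coordinate (after the extra $-\trans{\mathbf{1}} y$ row is appended) cannot be dominated in the $\leq_{\mathbb{R}^{m+1}_+}$ order by another vector of the same form without being equal to it.

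For item 2, the two equalities follow from the block structure of $P$ and $Q$. Note that for any $(x,y) \in S$, $P(x,y) = (y, -\trans{\mathbf{1}} y)^{\mathsf{T}}$, so $\proj_{-1} P(x,y) = y$; ranging over all $(x,y) \in S$ yields $\proj_{-1} P[S] = \{ y \in \mathbb{R}^m \mid \exists x: (x,y) \in S\} = Y$. For the second equality, observe that $Qy = (y, -\trans{\mathbf{1}} y)^{\mathsf{T}}$, so $QY = \{(y, -\trans{\mathbf{1}} y)^{\mathsf{T}} \mid y \in Y\}$, while $P[S] = \{(y, -\trans{\mathbf{1}} y)^{\mathsf{T}} \mid \exists x: (x,y) \in S\}$, and the two descriptions coincide by the definition of $Y$.

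Neither step poses a real obstacle; the only point worth flagging is the short argument in item 1 that componentwise $y \leq \bar{y}$ combined with $\trans{\mathbf{1}} y \geq \trans{\mathbf{1}} \bar{y}$ forces equality of every coordinate, which is what makes the augmented objective $(y, -\trans{\mathbf{1}} y)$ turn every feasible point into a minimizer under the natural ordering.
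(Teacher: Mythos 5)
Your proof is correct and follows the same route as the paper, which simply cites ``the form of the matrix $P$'' for item 1 and the matrix identities $\proj_y = \proj_{-1}P$ and $P = Q\proj_y$ for item 2; you have merely written out the details that the paper leaves implicit. In particular, your observation that $y \leq \bar{y}$ componentwise together with $\trans{\mathbf{1}}y \geq \trans{\mathbf{1}}\bar{y}$ forces $y = \bar{y}$ is exactly the content of the paper's appeal to the structure of $P$.
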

\begin{proof}
The first claim follows from the form of the matrix $P$. Since $Y = \proj_y [S]$, the second claim follows from the matrix equalities $\proj_y = \proj_{-1} \cdot P$ and $P = Q \cdot \proj_y$. 
  \end{proof}
A similar observation was made in~\cite{SZC18} about the problem studied there. Also here, convexity is not used within the proof, so the claim would hold also for non-convex projection problems.

For convenience, we restate from Definition~\ref{def_CVOP_sol} the definition of exact and approximate solutions specifically for problem~\eqref{MOCP} with the direction $ \Vert \mathbb{1} \Vert^{-1} \mathbb{1} \in \interior \mathbb{R}^{m+1}_+$, and applying Lemma~\ref{lemma_minimizer}~(1). A set $\bar{S} \subseteq S$ is a \textbf{solution} of~\eqref{MOCP} if 
\begin{align}
\label{eq1}
\mathcal{P} = \cl \conv (P[\bar{S}] + \mathbb{R}^{m+1}_+).
\end{align}
A nonempty finite set $\bar{S} \subseteq S$ is a \textbf{finite $\epsilon$-solution} of a self-bounded~\eqref{MOCP} if
\begin{align}
\label{eq0}
\mathcal{P} \subseteq \conv P[\bar{S}] + \mathcal{P}_{\infty} - \epsilon\{ \Vert \mathbb{1} \Vert^{-1} \mathbb{1}\}.
\end{align}
This includes also the case of a bounded problem~\eqref{MOCP}, where $\mathcal{P}_{\infty} = \mathbb{R}^{m+1}_+$.

\subsubsection{Relation of Boundedness}
\label{sec_self}
The definition of an approximate solution and the availability of solvers (for~\eqref{CVOP}) depend on whether the problem is (self-)bounded. So before we can relate approximate solutions of the two problems, we need to relate their properties with respect to boundedness. Which also brings us back to the question raised in the previous section -- why does our definition of a self-bounded problem differ between the projection and the vector optimization problem?

An analogy with the corresponding definition in convex vector optimization would suggest that the convex projection~\eqref{CP} should be considered self-bounded if $Y \neq \mathbb{R}^m$ and there exists a single point $y \in \mathbb{R}^m$ such that
\begin{align}
\label{eq7}
Y \subseteq \{y\} + Y_{\infty}.
\end{align}
Instead, we suggested to replace the single point $y$ with a convex polytope and to use the recession cone of the closure of $Y$. This led to the following notion of self-boundedness in Definition~\ref{def_CP_bounded}: The convex projection problem~\eqref{CP} is called self-bounded if $Y \neq \mathbb{R}^m$ and there exist finitely many points $y^{(1)}, \dots, y^{(k)} \in \mathbb{R}^m$ such that
\begin{align}
\tag{\ref{eq12}}
Y \subseteq \conv \{y^{(1)}, \dots, y^{(k)}\} + (\cl Y)_{\infty}.
\end{align}

Two considerations motivate the definition via~\eqref{eq12}: First, the recession cone $Y_{\infty}$ might have an empty interior. Second, the set $Y$ does not need to be closed. Both of these can lead to situations where the multi-objective problem~\eqref{MOCP} is self-bounded, but the projection~\eqref{CP} does not satisfy~\eqref{eq7}. We illustrate this in Examples~\ref{ex_self1} and~\ref{ex_self2}, where for simplicity the trivial projection, i.e. the identity, is used. In both of our examples these issues can be resolved by replacing condition~\eqref{eq7} with condition~\eqref{eq12}. We will prove in Proposition~\ref{lemma_self-bounded} below, that this is also in general the case.

\begin{example}
\label{ex_self1}
Consider the set
\begin{align*}
Y = S = \left\lbrace (y_1, y_2) \; \vert \; y_1^2 + y_2^2 \leq 1  \right\rbrace + \cone \{(0,1)\}
\end{align*}
and its recession cone $Y_{\infty} = \cone \, \{(0,1)\}$. The associated multi-objective problem~\eqref{MOCP} with the upper image
\begin{align*}
\mathcal{P} = \left\lbrace (y_1, y_2, -y_1-y_2) \; \vert \; y_1^2 + y_2^2 \leq 1  \right\rbrace + \cone \{ (0,1,-1), (1,0,0), (0,0,1) \},
\end{align*}
is self-bounded as $\mathcal{P} \subseteq \{(-1,-1,-2)\} + \mathcal{P}_{\infty}$. However, the set $Y$ does not satisfy~\eqref{eq7} for any single point $y \in \mathbb{R}^m$. Because of its empty interior, no shifting of the cone $Y_{\infty}$ can cover the set $Y$ with a non-empty interior. But already two points $y^{(1)} = (-1, -1), \, y^{(2)} = (1, -1)$ suffice for~\eqref{eq12}. 
\end{example}

\begin{example}
\label{ex_self2}
Consider the convex, but not closed, set
\begin{align*}
Y =  S = \left\lbrace (y_1, y_2) \; \vert \; 0 \leq y_1 < 1,  0 \leq y_2  \right\rbrace \cup  \{(1, 0)\}.
\end{align*}
It is not bounded, but its recession cone is trivial, $Y_{\infty} = \{0\}$.
The associated multi-objective problem~\eqref{MOCP} with the upper image
\begin{align*}
\mathcal{P} =  \left\lbrace (y_1, 0, -y_1) \; \vert \; 0 \leq y_1 \leq 1  \right\rbrace + \cone \{ (0,1,-1), (1,0,0), (0,0,1) \},
\end{align*}
is self-bounded as $\mathcal{P} \subseteq \{(0,0,-1)\} + \mathcal{P}_{\infty}$. However, the set $Y$ with its trivial recession cone $Y_{\infty} = \{0\}$ clearly cannot satisfy~\eqref{eq7}. It also wouldn't suffice to just replace the single point $y$ in~\eqref{eq7} by a convex polytope $\conv \{y^{(1)}, \dots, y^{(k)}\}$, since the set $Y$ is not bounded. But the recession cone $(\cl Y)_{\infty} = \cone  \, \{(0,1)\}$ satisfies~\eqref{eq12} with e.g.~the pair of points $y^{(1)} = (0,0), \, y^{(2)} = (1, 0)$. 
\end{example}

The two considerations illustrated here were the motivation for defining self-boundedness via~\eqref{eq12}. The condition~\eqref{eq12} can be considered as a generalization of the definition of self-boundedness found in the literature. For vector optimization problems, Definition~\ref{def_CVOP_bounded} and a definition via~\eqref{eq12} coincide because of two properties of upper images. An upper image is always a closed set and its recession cone is solid as it always contains the ordering cone.

We will now prove that, as long as one uses the more general notion of self-boundedness (Definition~\ref{def_CP_bounded}) for~\eqref{CP},
boundedness, self-boundedness, and unboundedness of the projection~\eqref{CP} are equivalent to boundedness, self-boundedness, and unboundedness of the multi-objective problem~\eqref{MOCP}, respectively. 

\begin{proposition}
\label{lemma_self-bounded}
\begin{enumerate}
\item The convex projection~\eqref{CP} is bounded if and only if the multi-objective problem~\eqref{MOCP} is bounded.
\item The convex projection~\eqref{CP} is self-bounded if and only if the multi-objective problem~\eqref{MOCP} is self-bounded.
\item The convex projection~\eqref{CP} is unbounded if and only if the multi-objective problem~\eqref{MOCP} is unbounded.
\end{enumerate}
\end{proposition}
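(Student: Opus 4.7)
My plan is to treat the three parts in sequence, with Part~(2) carrying the main weight and hinging on a single recession-cone identity. Throughout I use $P[S] = QY$ from Lemma~\ref{lemma_minimizer}, so that $\mathcal{P} = \cl(QY + \mathbb{R}^{m+1}_+)$. For Part~(1), if $Y \subseteq B_K$ then $QY$ is bounded, hence contained in $\{q\} + \mathbb{R}^{m+1}_+$ for $q$ a componentwise lower bound; this closed shifted orthant also contains $QY + \mathbb{R}^{m+1}_+$, giving $\mathcal{P} \subseteq \{q\} + \mathbb{R}^{m+1}_+$. Conversely, if $\mathcal{P} \subseteq \{q\} + \mathbb{R}^{m+1}_+$ then every $y \in Y$ satisfies $Qy \geq q$: the first $m$ coordinates give $y_i \geq q_i$ and the last gives $\trans{\mathbf{1}}y \leq -q_{m+1}$, and combining these yields componentwise bounds on $y$, so $Y$ is bounded.

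For Part~(2) the main tool is the identity
\[
\mathcal{P}_\infty = Q\bigl((\cl Y)_\infty\bigr) + \mathbb{R}^{m+1}_+.
\]
I would establish it by noting that $Q$ is injective linear, hence proper onto its range, so $Q(\cl Y)$ is closed in $\mathbb{R}^{m+1}$ with $\bigl(Q(\cl Y)\bigr)_\infty = Q\bigl((\cl Y)_\infty\bigr)$. A direct check shows $Q\bigl((\cl Y)_\infty\bigr) \cap (-\mathbb{R}^{m+1}_+) = \{0\}$ (if $Qr \leq 0$ then $r_i \leq 0$ for $i \leq m$ and $\trans{\mathbf{1}}r \geq 0$, forcing $r = 0$), so a Rockafellar-type closedness criterion gives that $Q(\cl Y) + \mathbb{R}^{m+1}_+$ is closed, equals $\mathcal{P}$, and has the recession cone stated above. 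Given the identity, the forward direction of Part~(2) is essentially immediate: applying $Q$ to $Y \subseteq \conv\{y^{(1)}, \dots, y^{(k)}\} + (\cl Y)_\infty$, bounding the polytope $\conv\{Qy^{(i)}\}$ componentwise from below by some $q$, absorbing $\mathbb{R}^{m+1}_+$ into $\mathcal{P}_\infty$, and closing yields $\mathcal{P} \subseteq \{q\} + \mathcal{P}_\infty$.

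The reverse direction is more delicate. From $\mathcal{P} \subseteq \{q\} + \mathcal{P}_\infty$ and the identity, each $y \in Y$ gives $Qy - q = Qr + e$ with $r \in (\cl Y)_\infty$ and $e \in \mathbb{R}^{m+1}_+$, so $Q(y - r) = q + e$. Since $\operatorname{range}(Q) = \{w \in \mathbb{R}^{m+1} : \trans{\mathbb{1}}w = 0\}$, this forces $\trans{\mathbb{1}}e = -\trans{\mathbb{1}}q$, and after possibly replacing $q$ by $q - \lambda\mathbb{1}$ (still a valid shift since $\mathbb{1} \in \mathcal{P}_\infty$) one may assume $a := -\trans{\mathbb{1}}q \geq 0$, confining $e$ to the bounded simplex $\{e \geq 0 : \trans{\mathbb{1}}e = a\}$. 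Projecting via $\proj_{-1}$ then writes $y = r + \proj_{-1}(q) + \proj_{-1}(e)$ with $\proj_{-1}(e)$ in an $m$-simplex on $m+1$ explicit vertices, so $Y$ is contained in the required polytope plus $(\cl Y)_\infty$. Part~(3) then follows by contraposition from Parts~(1) and~(2), once one observes that a bounded problem is always self-bounded on either side (on the \eqref{CP} side, a bounded convex $Y$ has $(\cl Y)_\infty = \{0\}$ and sits inside a cube, i.e.\ inside a polytope). The main obstacle I foresee is establishing the recession-cone identity, since closures of linear images and Minkowski sums do not generally commute with the recession-cone operation; the argument relies on the non-opposition $Q\bigl((\cl Y)_\infty\bigr) \cap (-\mathbb{R}^{m+1}_+) = \{0\}$ together with properness of $Q$. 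A secondary subtlety is reconciling the side conditions $Y \neq \mathbb{R}^m$ and $\mathcal{P} \neq \mathbb{R}^{m+1}$: the latter holds automatically (since $\trans{\mathbb{1}}Qy = 0$ forces $\mathcal{P} \subseteq \{z : \trans{\mathbb{1}}z \geq 0\}$), whereas the former requires a separate check.
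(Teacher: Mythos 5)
Your proof is correct and follows essentially the same route as the paper's: the same recession-cone identity $\mathcal{P}_\infty = Q\left((\cl Y)_\infty\right) + \mathbb{R}^{m+1}_+$ established via Rockafellar's closedness criterion using $Q\left((\cl Y)_\infty\right) \cap (-\mathbb{R}^{m+1}_+) = \{0\}$, the same componentwise lower bound on the image polytope for the forward direction, and the same identification of $(\{q\}+\mathbb{R}^{m+1}_+)\cap Q\mathbb{R}^m$ as a simplex on $m+1$ vertices for the reverse direction. The only cosmetic differences are that you prove Part~(1) directly rather than deducing it from Part~(2) and the recession-cone relation as the paper does, and that you normalize $q$ by subtracting $\lambda\mathbb{1}$ where the paper simply observes $\trans{\mathbb{1}}q\leq 0$ from $\mathcal{P}\subseteq\{x:\trans{\mathbb{1}}x\geq 0\}$.
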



The proof of Proposition~\ref{lemma_self-bounded} will require the following two lemmas. The first one is a trivial observation and thus stated without proof, but comes in handy as it will also be used later. The second lemma provides a crucial connection between the recession cones of the two problems.

\begin{lemma}
\label{lemma_A2}
Let $A \subseteq \mathbb{R}^n$ be a set and $C \subseteq \mathbb{R}^n$ be a convex cone. Then, $\cl (A + C) = \cl( \cl A + C)$.
\end{lemma}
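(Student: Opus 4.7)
The plan is to prove the equality by showing both inclusions, and notice that convexity of $C$ will not actually be needed for the argument --- it suffices that $C$ is any subset of $\mathbb{R}^n$.

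For the inclusion $\cl(A + C) \subseteq \cl(\cl A + C)$, I would simply observe that $A \subseteq \cl A$ gives $A + C \subseteq \cl A + C$ by Minkowski monotonicity, and applying the closure operator (which preserves inclusions) yields the claim immediately.

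The reverse inclusion $\cl(\cl A + C) \subseteq \cl(A + C)$ reduces, again by monotonicity of closure and the idempotence $\cl \cl(A+C) = \cl(A+C)$, to showing $\cl A + C \subseteq \cl(A + C)$. For this, pick an arbitrary $z \in \cl A + C$ and write $z = a + c$ with $a \in \cl A$ and $c \in C$. Choose a sequence $(a_k)_{k \in \N} \subseteq A$ with $a_k \to a$. Then $a_k + c \in A + C$ for every $k$, and $a_k + c \to a + c = z$, so $z \in \cl(A+C)$, as required.

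Combining the two inclusions yields $\cl(A + C) = \cl(\cl A + C)$. There is no genuine obstacle in this argument; the only thing worth flagging is that convexity and cone structure of $C$ play no role, so the statement holds in the stated generality (and beyond).
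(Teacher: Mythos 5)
Your proof is correct and is exactly the routine argument one would expect; the paper itself states this lemma without proof, calling it a trivial observation, so there is nothing to compare against beyond noting that your two inclusions (monotonicity of closure in one direction, a sequence argument for $\cl A + C \subseteq \cl(A+C)$ in the other) are the standard way to fill it in. Your observation that neither convexity nor the cone structure of $C$ is used is also accurate.
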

%

\begin{lemma}
\label{lemma_recession}
For the recession cones of the sets $Y$ and $P[S]$ it holds
\begin{align*}
&(P[S])_{\infty} =  Q [Y_{\infty}] \;\; \text{ and } \;\; Y_{\infty} = \proj_{-1} [(Q[Y])_{\infty}], \\ 
&(\cl P[S])_{\infty} =  Q [(\cl Y)_{\infty}] \;\; \text{ and } \;\; (\cl Y)_{\infty} = \proj_{-1} [(\cl Q[Y])_{\infty}].
\end{align*}
For the recession cone of the upper image it holds
\begin{align}
\label{eq_recession}
\mathcal{P}_{\infty} = Q[(\cl Y)_{\infty}] + \mathbb{R}^{m+1}_+.
\end{align}
\end{lemma}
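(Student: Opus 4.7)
The plan is to reduce everything to two structural facts about $Q$: it is injective (its top $m$ rows are the identity) and $Q(\mathbb{R}^m) \cap (-\mathbb{R}^{m+1}_+) = \{0\}$. Indeed, if $(v, -\trans{\mathbf{1}} v) \leq 0$ componentwise, then $v \leq 0$ and $\trans{\mathbf{1}} v \geq 0$, which forces $v=0$. I also use $\proj_{-1} \circ Q = I$ on $\mathbb{R}^m$ and $P[S] = QY$ from Lemma~\ref{lemma_minimizer}.

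For the first two lines of recession-cone identities I would first prove the general claim: for any convex $A \subseteq \mathbb{R}^m$, $(QA)_\infty = Q(A_\infty)$. The inclusion $\supseteq$ is immediate from the definition. For $\subseteq$, any $d \in (QA)_\infty$ lies in the linear subspace $Q(\mathbb{R}^m)$, so $d = Qv$ for a unique $v$; then $Q(a + \lambda v) = Qa + \lambda d \in QA$ together with injectivity of $Q$ forces $a + \lambda v \in A$ for every $a \in A$ and $\lambda \geq 0$, whence $v \in A_\infty$. Applied to $A = Y$, this gives $(P[S])_\infty = Q Y_\infty$, and composing with $\proj_{-1}$ recovers $Y_\infty = \proj_{-1}(QY)_\infty$. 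The closure version additionally requires $\cl(QY) = Q(\cl Y)$, which holds because the injective linear map $Q$ between finite-dimensional spaces is a homeomorphism onto the closed subspace $Q(\mathbb{R}^m)$; then the same argument applied to $A = \cl Y$ gives the second line.

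For the upper-image identity~\eqref{eq_recession}, Lemma~\ref{lemma_A2} lets me rewrite $\mathcal{P} = \cl(Q(\cl Y) + \mathbb{R}^{m+1}_+)$. The inclusion $\supseteq$ is direct: $\mathbb{R}^{m+1}_+ \subseteq \mathcal{P}_\infty$ because $\mathcal{P}$ is closed and upper, and $Q(\cl Y)_\infty = (\cl P[S])_\infty \subseteq \mathcal{P}_\infty$ because $\cl P[S] \subseteq \mathcal{P}$. For the converse I would argue that the Minkowski sum $Q(\cl Y) + \mathbb{R}^{m+1}_+$ is already closed, so the outer closure in the definition of $\mathcal{P}$ is redundant, and then invoke the standard fact that the recession cone of such a closed sum of two closed convex sets equals the sum of their recession cones. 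The main obstacle is justifying closedness, which is the one place where the specific last row of $Q$ is essential: by the second structural fact, $Q(\cl Y)_\infty \cap (-\mathbb{R}^{m+1}_+) \subseteq Q(\mathbb{R}^m) \cap (-\mathbb{R}^{m+1}_+) = \{0\}$, so the recession cones of the two summands meet only at the origin, which is a standard sufficient condition (Rockafellar-type) for both closedness of the sum and the desired recession-cone formula.
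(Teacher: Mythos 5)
Your proposal is correct and follows essentially the same route as the paper: the first four identities via the injectivity of $Q$ (equivalently, $Q$ being a right-inverse of $\proj_{-1}$) and the fact that everything lives in the closed subspace $Q\mathbb{R}^m = \{x \in \mathbb{R}^{m+1} \mid \trans{\mathbb{1}}x = 0\}$, and the identity~\eqref{eq_recession} via Lemma~\ref{lemma_A2} together with the observation $Q(\cl Y)_{\infty} \cap (-\mathbb{R}^{m+1}_+) \subseteq Q\mathbb{R}^m \cap (-\mathbb{R}^{m+1}_+) = \{0\}$, which is exactly the hypothesis of Corollary~9.1.2 of~\cite{R70} that the paper invokes for closedness of the sum and the recession-cone formula. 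The only cosmetic difference is that you spell out the general claim $(QA)_{\infty} = Q(A_{\infty})$ and give an explicit separate argument for the inclusion $\supseteq$ in~\eqref{eq_recession}, which the Rockafellar corollary already delivers.
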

\begin{proof}
The first four equalities follow from $P[S] = Q[Y]$ (Lemma~\ref{lemma_minimizer}~(2)) and $\cl P[S] = Q [\cl Y]$, the matrix $Q$ being the right-inverse of $\proj_{-1}$ (Lemma~\ref{lemma_minimizer}~(2))  and the fact that $\cl P[S] \subseteq Q [\mathbb{R}^m] = \{ x \in \mathbb{R}^{m+1} \vert \trans{\mathbb{1}} x  =0\}$.

According to Lemma~\ref{lemma_A2}, for the upper image it holds $\mathcal{P} = \cl ( Q [\cl Y] + \mathbb{R}^{m+1}_+)$. The sets $Q [\cl Y]$ and $\mathbb{R}^{m+1}_+$ are both closed and convex. One easily verifies that $Q [(\cl Y)_{\infty}] \subseteq Q [\mathbb{R}^{m}] = \{ x \in \mathbb{R}^{m+1} \vert \trans{\mathbb{1}} x  =0\}$, so $Q [(\cl Y)_{\infty}] \cap -\mathbb{R}^{m+1}_+ = \{0\}$ and the sets $Q [\cl Y]$ and $\mathbb{R}^{m+1}_+$ satisfy all assumptions of Corollary 9.1.2 of~\cite{R70}. Thus, the set $Q [\cl Y] + \mathbb{R}^{m+1}_+$ is closed and $( Q [\cl Y] + \mathbb{R}^{m+1}_+)_{\infty} = Q [(\cl Y)_{\infty}] + \mathbb{R}^{m+1}_+$.
  \end{proof}

\begin{proof}[of Proposition~\ref{lemma_self-bounded}.]
Let us first prove the second claim.
\begin{enumerate} 
\item[$\Rightarrow$] Let~\eqref{CP} be self-bounded. Applying mapping $Q$ onto~\eqref{eq12} and adding the standard ordering cone yields
\begin{align*}
P[S] +  \mathbb{R}^{m+1}_+  \subseteq \conv \{Qy^{(1)}, \dots, Qy^{(k)}\} + Q[(\cl Y)_{\infty}] + \mathbb{R}^{m+1}_+.
\end{align*}
For a vector $q \in \mathbb{R}^{m+1}$, element-wise defined by $q_{i} = \min\limits_{j=1, \dots k} (Qy^{(j)})_{i}$ for $i=1, \dots m+1$, it holds $\conv \{Qy^{(1)}, \dots, Qy^{(k)}\} \subseteq \{q\} + \mathbb{R}^{m+1}_+$, so
\begin{align*}
P[S] +  \mathbb{R}^{m+1}_+  \subseteq \{q\} + Q[(\cl Y)_{\infty}] + \mathbb{R}^{m+1}_+ = \{q\} + \mathcal{P}_{\infty},
\end{align*}
where the last equality is due to~\eqref{eq_recession}. Since the shifted cone $\{q\} + \mathcal{P}_{\infty}$ is closed, self-boundedness of~\eqref{MOCP} follows.

\item[$\Leftarrow$] 
If the problem~\eqref{MOCP} is self-bounded, then there is  a point $q \in \mathbb{R}^{m+1}$ such that
\begin{align} 
\label{eq20}
\mathcal{P}  \subseteq \{q\} + \mathcal{P}_{\infty}.
\end{align}
Since $Q[Y] + \mathbb{R}^{m+1}_+ \subseteq Q[\mathbb{R}^m] + \mathbb{R}^{m+1}_+  = \{ x \in \mathbb{R}^{m+1} \vert \trans{\mathbb{1}} x  \geq 0\}$, for the upper image $\mathcal{P} = \cl ( Q[Y] + \mathbb{R}^{m+1}_+ )$ and its recession cone $\mathcal{P}_{\infty}$ it holds $\mathcal{P}, \mathcal{P}_{\infty} \subseteq \{ x \in \mathbb{R}^{m+1} \vert \trans{\mathbb{1}} x  \geq 0\}$. As the upper image $\mathcal{P}$ contains elements of $Q [\mathbb{R}^m]$, inclusion~\eqref{eq20} is only possible if $\trans{\mathbb{1}}  q \leq 0$.

As $Q[Y] \subseteq \mathcal{P} \cap Q [\mathbb{R}^m]$, from~\eqref{eq20} and~\eqref{eq_recession} it follows 
\begin{align*}
Q[Y]  &\subseteq ( \{q\} + \mathcal{P}_{\infty}) \cap Q [\mathbb{R}^m] = ( \{q\}  + \mathbb{R}^{m+1}_+) \cap Q [\mathbb{R}^m] + Q[(\cl Y)_{\infty}] \\
&= \conv \{ q^{(1)}, \dots, q^{(m+1)} \} + Q[(\cl Y)_{\infty}],
\end{align*}
where $q^{(i)} := q + \vert \trans{\mathbb{1}}  q \vert \cdot e^{(i)}$ for $i = 1, \dots, m+1$.
By applying the projection $\proj_{-1}$  we obtain
\begin{align*}
Y  &\subseteq  \conv \{ \proj_{-1} q^{(1)}, \dots,  \proj_{-1} q^{(m+1)} \} + (\cl Y)_{\infty}.
\end{align*}
\end{enumerate}

The first claim is a consequence of the second claim, relation~\eqref{eq_recession} and the fact that for a bounded~\eqref{CP} it holds $(\cl Y)_{\infty} = \{0\}$ and for a bounded problem~\eqref{MOCP} it holds $\mathcal{P}_{\infty} = \mathbb{R}^{m+1}_+$. The last claim is an equivalent reformulation of the first claim.
  \end{proof}

\subsubsection{Relation of Solutions}
\label{sec_solutions}
Note that under some compactness assumptions the convex projection problem~\eqref{CP} can be approximately solved using the algorithm in~\cite{SZC18}. Here, we are however interested in the connection between problem~\eqref{CP} and the multi-objective problem~\eqref{MOCP} and to see if one can be solved in place of the other. 
Lemma~\ref{lemma_minimizer} already provides us with a close connection between the two problems, which suggests that interchanging the two problems (in some manner) should be possible. We will now examine if, and in what sense, an (exact or approximate) solution of one problem solves the other problem. We start with the exact solutions, where we obtain an equivalence. Note that this result does not need any type of boundedness assumption. Then, we will move towards approximate solutions for the bounded and the self-bounded case.
\begin{theorem}
\label{thm_exact_sol}
A set $\bar{S} \subseteq S$ is a solution of the convex projection~\eqref{CP} if and only if it is a solution of the multi-objective problem~\eqref{MOCP}.
\end{theorem}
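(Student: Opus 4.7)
The plan is to leverage the matrix identities from Lemma~\ref{lemma_minimizer}, namely $P = Q\proj_y$, $\proj_y = \proj_{-1} P$, and $\proj_{-1} Q = I$, together with $P[S] = QY$, to translate between inclusions in $\mathbb{R}^m$ and in $\mathbb{R}^{m+1}$. The identity $\conv(A+B) = \conv A + \conv B$ and Lemma~\ref{lemma_A2} will be used to move closures and convex hulls past the addition of the cone $\mathbb{R}^{m+1}_+$.

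For the forward direction, suppose $Y \subseteq \cl\conv \proj_y[\bar S]$. Since $Q$ is linear and continuous, applying $Q$ gives $QY \subseteq \cl\conv Q\proj_y[\bar S] = \cl\conv P[\bar S]$, hence $P[S] \subseteq \cl\conv P[\bar S]$. Adding $\mathbb{R}^{m+1}_+$ and taking closure yields
\begin{align*}
\mathcal{P} = \cl(P[S] + \mathbb{R}^{m+1}_+) \subseteq \cl(\cl\conv P[\bar S] + \mathbb{R}^{m+1}_+) = \cl\conv(P[\bar S] + \mathbb{R}^{m+1}_+),
\end{align*}
where the last equality uses Lemma~\ref{lemma_A2} and $\conv(P[\bar S] + \mathbb{R}^{m+1}_+) = \conv P[\bar S] + \mathbb{R}^{m+1}_+$. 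The reverse inclusion $\cl\conv(P[\bar S] + \mathbb{R}^{m+1}_+) \subseteq \mathcal{P}$ is immediate from $\bar S \subseteq S$ and the fact that $\mathcal{P}$ is closed and convex, giving \eqref{eq1}.

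The reverse direction is the more delicate step. Assume $\mathcal{P} = \cl\conv(P[\bar S] + \mathbb{R}^{m+1}_+)$ and fix $y \in Y$. Then $Qy \in QY = P[S] \subseteq \mathcal{P}$, so there exist sequences with $z^{(n)} = \sum_j \lambda_j^{(n)} \bigl( P(x_j^{(n)}, y_j^{(n)}) + c_j^{(n)} \bigr) \to Qy$, where $(x_j^{(n)}, y_j^{(n)}) \in \bar S$, $c_j^{(n)} \in \mathbb{R}^{m+1}_+$, and $\sum_j \lambda_j^{(n)} = 1$. The key observation is that $\trans{\mathbb{1}} P(x,y) = 0$ for every $(x,y)$, so setting $\tilde c^{(n)} := \sum_j \lambda_j^{(n)} c_j^{(n)} \in \mathbb{R}^{m+1}_+$ gives
\begin{align*}
\trans{\mathbb{1}}\tilde c^{(n)} = \trans{\mathbb{1}} z^{(n)} \longrightarrow \trans{\mathbb{1}} Qy = 0.
\end{align*}
Since $\tilde c^{(n)} \geq 0$ componentwise and $\trans{\mathbb{1}}\tilde c^{(n)} \to 0$, we conclude $\tilde c^{(n)} \to 0$. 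Applying $\proj_{-1}$ and using $\proj_{-1} P = \proj_y$ and $\proj_{-1} Q = I$ yields
\begin{align*}
\sum_j \lambda_j^{(n)} y_j^{(n)} = \proj_{-1} z^{(n)} - \proj_{-1} \tilde c^{(n)} \longrightarrow y,
\end{align*}
and since $\sum_j \lambda_j^{(n)} y_j^{(n)} \in \conv \proj_y[\bar S]$, it follows that $y \in \cl\conv \proj_y[\bar S]$, establishing \eqref{eq4}.

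The main obstacle is in the reverse direction: although $\proj_{-1}$ sends $\mathbb{R}^{m+1}_+$ onto $\mathbb{R}^m_+$ rather than $\{0\}$, one has to extract the inclusion $Y \subseteq \cl\conv \proj_y[\bar S]$ (without any positive cone added). The trick that makes this work is exploiting the linear functional $\trans{\mathbb{1}}$, which vanishes on the entire image $P[S]$ but is strictly nonnegative on $\mathbb{R}^{m+1}_+$, forcing any cone contribution in a converging sequence whose limit lies in $QY$ to vanish in the limit.
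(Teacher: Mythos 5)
Your proof is correct. The forward direction is essentially identical to the paper's: apply $Q$, add the cone, take closures via Lemma~\ref{lemma_A2}. The reverse direction rests on the same key insight as the paper's proof --- the functional $\trans{\mathbb{1}}$ vanishes on $P[S]$ and on $\conv P[\bar S]$ but is nonnegative and coercive on $\mathbb{R}^{m+1}_+$, so any cone contribution in an approximating element of $\mathcal{P}$ near a point of $QY$ must be small --- but you package it differently. The paper argues by contradiction: assuming $(\{q\}+B_\epsilon)\cap \conv P[\bar S]=\emptyset$, it writes $q+b=\bar q+r$, derives $\trans{\mathbb{1}} r = \trans{\mathbb{1}} b$, and bounds $\|b-r\|\leq (m+2)\delta$ to reach a contradiction with $\delta=\epsilon/(m+2)$. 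You instead run a direct sequential argument: decompose each approximant as a convex combination plus an aggregated cone term $\tilde c^{(n)}$, observe $\trans{\mathbb{1}}\tilde c^{(n)}=\trans{\mathbb{1}} z^{(n)}\to 0$, hence $\tilde c^{(n)}\to 0$ by nonnegativity, and conclude $\sum_j\lambda_j^{(n)}y_j^{(n)}\to y$. Your version avoids the explicit constant $m+2$ and the contradiction setup, which makes it arguably cleaner and norm-independent; the paper's version has the minor advantage of producing a quantitative estimate of the same flavor that reappears in its approximate-solution results (Theorems~\ref{thm_CPsolMOCP} and~\ref{thm_MOCPsolCP}). Both are complete proofs.
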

\begin{proof}
\begin{enumerate}
\item[$\Rightarrow$] Let $\bar{S} \subseteq S$ be a solution of~\eqref{CP}. Applying the mapping $Q$ onto~\eqref{eq4}, we obtain
\begin{align*}
P[S] \subseteq \cl \conv P[\bar{S}],
\end{align*}
see Lemma~\ref{lemma_minimizer}~(2). Adding the ordering cone, taking the closure and Lemma~\ref{lemma_A2} give
\begin{align*}
 \cl ( P[S] + \mathbb{R}^{m+1}_+ ) \subseteq \cl ( \conv P[\bar{S}] + \mathbb{R}^{m+1}_+ ), 
\end{align*}
therefore, $\bar{S}$ is an infimizer of~\eqref{MOCP}, cf. Definition~\ref{def_CVOP_sol}. According to Lemma~\ref{lemma_minimizer}, it is also a solution.

\item[$\Leftarrow$] Let $\bar{S} \subseteq S$ be a solution of~\eqref{MOCP}. We prove that 
$$P[S] \subseteq \cl \conv P [\bar{S}]$$
 via contradiction: Assume that there is $q \in P[S]$ and $\epsilon > 0$ such that $(\{q\} + B_{\epsilon}) \cap \conv P[\bar{S}] = \emptyset$. According to~\eqref{eq1}, for all $\delta > 0$ it holds $(\{q\} + B_{\delta}) \cap (\conv P[\bar{S}] + \mathbb{R}^{m+1}_+) \neq \emptyset$. That is, there exist $b \in B_{\delta}, \bar{q} \in \conv P[\bar{S}]$ and $r \in \mathbb{R}^{m+1}_+$ such that $q + b = \bar{q} + r$. 
As $b_i \leq \delta$ one has $\trans{\mathbb{1}}b \leq (m+1) \delta$. Thus, $\| b - r \| \leq \| b \| + \|-r\| \leq \|b\| + \trans{\mathbb{1}}r =\|b\| + \trans{\mathbb{1}}b \leq \delta + (m+1)\delta$, where $\trans{\mathbb{1}}r =\trans{\mathbb{1}}b$ follows from $q + b = \bar{q} + r$ and $\trans{\mathbb{1}} q = \trans{\mathbb{1}} \bar{q} = 0$. Hence, one obtains $b-r \in B_{\delta \cdot (m+2)}$, so for the choice of $\delta := \epsilon / (m+2)$ we get a contradiction. 
 
Finally, applying $\proj_{-1}$ gives $Y \subseteq \cl \conv \proj_y [\bar{S}]$ and $\bar{S}$ is a solution of~\eqref{CP}.
\end{enumerate}
  \end{proof}

This result is analogous to Theorem 3 of~\cite{LW16} for the polyhedral case. Here it is unnecessary to prove the existence of a solution when the problem is feasible as the full feasible set is a solution according to Definitions~\ref{def_CVOP_sol} and~\ref{def_CP_sol} (consider Lemma~\ref{lemma_minimizer}). Since (exact) solutions are usually unattainable in practice, it is of greater interest to see how the approximate solutions relate. 
 It turns out, the approximate solutions are equivalent only up to an increased error. We will see that in order to use an approximate solution of one problem to solve the other problem, we need to  increase the tolerance proportionally to a certain multiplier. These multipliers depend both on the dimension $m$ of the problem and on the $p$-norm under consideration. Recall that the definition of a finite $\epsilon$-solution of either of the two problems depends implicitly on the fixed $p \in [1, \infty]$. 
For this reason, we denote the $p$ explicitly in this section.

We deal with the bounded and the self-bounded case jointly. As we mentioned before, a bounded~\eqref{CP} has a trivial recession cone $(\cl Y)_{\infty} = \{0\}$ and if~\eqref{MOCP} is bounded it holds $\mathcal{P}_{\infty} = \mathbb{R}^{m+1}_+$. This verifies that~\eqref{eq13} and~\eqref{eq0} correcly define a finite $\epsilon$-solution of a bounded~\eqref{CP} and  a bounded problem~\eqref{MOCP}, respectively. Proposition~\ref{lemma_self-bounded} provided an equivalence between the self-boundedness of the two problems. The following two theorems contain the main results.

\begin{theorem}
\label{thm_CPsolMOCP}
Let the convex projection~\eqref{CP} be self-bounded and fix $p \in [1, \infty]$. 
If $\bar{S} \subseteq S$ is a finite $\epsilon$-solution of~\eqref{CP} (under the $p$-norm), then it is  a finite $\left(\underline{\kappa} \cdot \epsilon \right)$-solution of~\eqref{MOCP} (under the $p$-norm), where $\underline{\kappa} = \underline{\kappa} (m, p) = m^{\frac{p-1}{p}} \cdot (m+1)^{\frac{1}{p}}$ for $p \in [1, \infty)$ and $\underline{\kappa} = \underline{\kappa} (m, \infty ) = m$ for $p = \infty$.
\end{theorem}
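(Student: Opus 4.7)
The plan is to start from the defining inclusion of $\bar{S}$ being a finite $\epsilon$-solution of~\eqref{CP},
\begin{equation*}
Y \subseteq \conv \proj_y[\bar{S}] + (\cl Y)_\infty + B_\epsilon,
\end{equation*}
and push it through the linear map $Q$ into the upper-image geometry. Using $P = Q\proj_y$ and $QY = P[S]$ (Lemma~\ref{lemma_minimizer}~(2)) together with the linearity of $Q$, the inclusion becomes $P[S] \subseteq \conv P[\bar{S}] + Q(\cl Y)_\infty + QB_\epsilon$. Adding $\mathbb{R}^{m+1}_+$ on both sides and invoking Lemma~\ref{lemma_recession} (which gives $Q(\cl Y)_\infty + \mathbb{R}^{m+1}_+ = \mathcal{P}_\infty$) rewrites this as
\begin{equation*}
P[S] + \mathbb{R}^{m+1}_+ \subseteq \conv P[\bar{S}] + \mathcal{P}_\infty + QB_\epsilon.
\end{equation*}

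The crux of the proof is then to absorb the distorted ball $QB_\epsilon$ into a shift of $\mathbb{R}^{m+1}_+$ in the direction $\|\mathbb{1}\|^{-1}\mathbb{1}$, namely to prove
\begin{equation*}
QB_\epsilon \subseteq \mathbb{R}^{m+1}_+ - \underline{\kappa}\,\epsilon\,\|\mathbb{1}\|^{-1}\mathbb{1}.
\end{equation*}
For $b \in B_\epsilon$ one has $Qb = (b, -\trans{\mathbf{1}}b)$, so this reduces to two coordinate-wise estimates: $b_i \geq -\underline{\kappa}\,\epsilon\,\|\mathbb{1}\|^{-1}$ for $i=1,\dots,m$, and $\trans{\mathbf{1}}b \leq \underline{\kappa}\,\epsilon\,\|\mathbb{1}\|^{-1}$. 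The first follows from $|b_i| \leq \|b\| \leq \epsilon$, which forces $\underline{\kappa} \geq (m+1)^{1/p} = \|\mathbb{1}\|$. The second follows from H\"older's inequality with $1/p + 1/p' = 1$, giving $|\trans{\mathbf{1}}b| \leq \|\mathbf{1}\|_{p'}\|b\| = m^{(p-1)/p}\epsilon$, which forces $\underline{\kappa} \geq m^{(p-1)/p}(m+1)^{1/p}$. The latter bound dominates, so $\underline{\kappa} = m^{(p-1)/p}(m+1)^{1/p}$ is precisely the required multiplier.

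Combining these, and using $\mathbb{R}^{m+1}_+ \subseteq \mathcal{P}_\infty$, yields
\begin{equation*}
P[S] + \mathbb{R}^{m+1}_+ \subseteq \conv P[\bar{S}] + \mathcal{P}_\infty - \underline{\kappa}\,\epsilon\,\|\mathbb{1}\|^{-1}\mathbb{1}.
\end{equation*}
Taking the closure on the left closes the argument: the right-hand side is already closed, being the Minkowski sum of the compact polytope $\conv P[\bar{S}]$ and the closed set $\mathcal{P}_\infty - \underline{\kappa}\,\epsilon\,\|\mathbb{1}\|^{-1}\mathbb{1}$, while the left-hand side becomes $\mathcal{P}$. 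This is exactly condition~\eqref{eq0} with $\epsilon$ replaced by $\underline{\kappa}\epsilon$. The main obstacle is isolating the correct multiplier: one must compare two elementary norm inequalities and confirm that the H\"older-type bound on $|\trans{\mathbf{1}}b|$ dominates uniformly across all $p\in[1,\infty]$, including the endpoints $p=1$ and $p=\infty$, which together yield the factor $m^{(p-1)/p}(m+1)^{1/p}$.
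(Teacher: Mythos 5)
Your proposal is correct and follows essentially the same route as the paper: push the defining inclusion through $Q$, add $\mathbb{R}^{m+1}_+$, use Lemma~\ref{lemma_recession} to identify $\mathcal{P}_\infty$, absorb $QB_\epsilon$ into a shift along $\|\mathbb{1}\|^{-1}\mathbb{1}$ via the coordinate bound $|b_i|\leq\epsilon$ and the H\"older bound $|\trans{\mathbf{1}}b|\leq m^{(p-1)/p}\epsilon$, and close with a compact-plus-closed argument (the paper cites Corollary 9.1.2 of~\cite{R70} for the same purpose). The only cosmetic omission is the remark that $\bar{S}$ consists of minimizers, which is automatic by Lemma~\ref{lemma_minimizer}~(1).
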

\begin{proof}
Multiplying~\eqref{eq13} with $Q$ and adding the ordering cone $\mathbb{R}^{m+1}_+$ yields
\begin{align}
\label{eq10}
P[S] +  \mathbb{R}^{m+1}_+  \subseteq \conv P[\bar{S}] + Q[(\cl Y)_{\infty}] + \mathbb{R}^{m+1}_+ + Q \left[ B_{\epsilon}^{p} \right].
\end{align}
Let $p \in [1, \infty)$. 
For $b \in B_{\epsilon}^{p}$ it holds $\vert b_i \vert \leq \epsilon$ for each $i = 1, \dots, m$. By considering the problem $\max \sum_{i=1}^m b_i \text{ s.t. } \sum_{i=1}^m b_i^p \leq 1$, we also obtain $\vert \sum_{i=1}^m b_i \vert  \leq m^{\frac{p-1}{p}} \cdot \epsilon$. Thus, we have $- \left( m^{\frac{p-1}{p}} \cdot \epsilon \right) \mathbb{1} \leq Qb$ for all $b \in B_{\epsilon}^{p}$, which leads to
\begin{align}
\label{eqN02}
Q \left[ B_{\epsilon}^{p} \right] \subseteq - \left( m^{\frac{p-1}{p}} \cdot \Vert \mathbb{1} \Vert_p \cdot \epsilon \right) \left\lbrace \Vert \mathbb{1} \Vert_p^{-1} \mathbb{1}    \right\rbrace + \mathbb{R}^{m+1}_+.
\end{align}
From the equations~\eqref{eq_recession},~\eqref{eq10} and~\eqref{eqN02} and $\Vert \mathbb{1} \Vert_p = (m+1)^{\frac{1}{p}}$ it follows
\begin{align*}
P[S] +  \mathbb{R}^{m+1}_+  &\subseteq 
 \conv P[\bar{S}] + \mathcal{P}_{\infty} - \left( m^{\frac{p-1}{p}} \cdot (m+1)^{\frac{1}{p}} \cdot \epsilon \right) \left\lbrace \Vert \mathbb{1} \Vert_p^{-1} \mathbb{1}    \right\rbrace.
\end{align*}

Now consider $p = \infty$. It similarly holds $\vert b_i \vert \leq \epsilon, i = 1, \dots, m$ , as well as $\vert \sum_{i=1}^m b_i \vert  \leq m \cdot \epsilon$ for $b \in B_{\epsilon}^{\infty}$. Therefore, the same steps yield $Q \left[ B_{\epsilon}^{\infty} \right] \subseteq - (m \cdot \epsilon ) \left\lbrace \mathbb{1}  \right\rbrace + \mathbb{R}^{m+1}_+$ and $P[S] +  \mathbb{R}^{m+1}_+  \subseteq \conv P[\bar{S}] + \mathcal{P}_{\infty} - \left( m \cdot \epsilon \right) \left\lbrace \Vert \mathbb{1} \Vert_p^{-1} \mathbb{1}    \right\rbrace$.

Finally, the sets $\conv P[\bar{S}]$ and $\mathcal{P}_{\infty}$ satisfy the assumptions of Corollary 9.1.2 of~\cite{R70} -- both sets are closed convex and $\conv P[\bar{S}]$ has a trivial recession cone as it is bounded -- therefore, the set $\conv P[\bar{S}] + \mathcal{P}_{\infty}$ is closed. Since the shifted set is also closed, we obtain $\mathcal{P} \subseteq  \conv P[\bar{S}] + \mathcal{P}_{\infty} - \left(\underline{\kappa} \cdot \epsilon \right) \left\lbrace \Vert \mathbb{1} \Vert_p^{-1} \mathbb{1}    \right\rbrace$ and $\bar{S}$ is a finite $\left(\underline{\kappa} \cdot \epsilon \right)$-solution of~\eqref{MOCP} according to Lemma~\ref{lemma_minimizer}.
  \end{proof}

\begin{theorem}
\label{thm_MOCPsolCP}
Let the multi-objective problem~\eqref{MOCP} be self-bounded and fix $p \in [1, \infty]$. 
If $\bar{S} \subseteq S$ is a finite $\epsilon$-solution of~\eqref{MOCP} (under the $p$-norm), then it is a finite $\left( \overline{\kappa} \cdot \epsilon \right)$-solution of~\eqref{CP} (under the $p$-norm), where $\overline{\kappa} = \overline{\kappa} (m, p) = \left(\frac{m^p + m - 1}{m+1} \right)^{\frac{1}{p}}$ for $p \in [1, \infty)$ and $\overline{\kappa} = \overline{\kappa} (m, \infty ) = m$ for $p = \infty$.
\end{theorem}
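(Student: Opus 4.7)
The plan is to take an arbitrary $y \in Y$ and follow $Qy \in P[S] \subseteq \mathcal{P}$ through the $\epsilon$-solution condition~\eqref{eq0}. Using $P[\bar{S}] = Q \proj_y[\bar{S}]$ from Lemma~\ref{lemma_minimizer}(2) and the identity $\mathcal{P}_\infty = Q(\cl Y)_\infty + \mathbb{R}^{m+1}_+$ from \eqref{eq_recession}, the assumption produces a decomposition
$$Qy = Q\bar{y} + Qr + s - \epsilon \|\mathbb{1}\|_p^{-1} \mathbb{1},$$
with $\bar{y} \in \conv \proj_y[\bar{S}]$, $r \in (\cl Y)_\infty$, and $s \in \mathbb{R}^{m+1}_+$.

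The key structural observation is that $\mathbb{1}^{\mathsf{T}} Q = 0$ by the definition of $Q$. Multiplying the decomposition by $\mathbb{1}^{\mathsf{T}}$ therefore kills every $Q$-term and gives the scalar identity
$$\sum_{i=1}^{m+1} s_i = \epsilon (m+1) \|\mathbb{1}\|_p^{-1} = \epsilon (m+1)^{1-1/p}.$$
So the nonnegative vector $s$ has a total mass determined by $\epsilon, m, p$. Applying $\proj_{-1}$ to the decomposition and using $\proj_{-1} Q = I$ and $\proj_{-1}\mathbb{1} = \mathbf{1}$ yields
$$y = \bar{y} + r + b, \qquad b := \proj_{-1}(s) - \epsilon \|\mathbb{1}\|_p^{-1} \mathbf{1}.$$
Since $\bar{y} \in \conv \proj_y[\bar{S}]$ and $r \in (\cl Y)_\infty$, it remains only to show $\|b\|_p \leq \overline{\kappa} \epsilon$, which will verify~\eqref{eq13}.

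The main technical step, and the place where the sharp constant $\overline{\kappa}$ enters, is an extremal argument. Writing $\alpha = \epsilon / (m+1)^{1/p}$, one has $b_i = s_i - \alpha$ for $i = 1, \dots, m$ with the constraints $s_i \geq 0$ and $\sum_{i=1}^{m+1} s_i = (m+1)\alpha$; equivalently $(s_1, \dots, s_m)$ ranges over the polytope $\{\sigma \in \mathbb{R}^m_+ : \sum_i \sigma_i \leq (m+1)\alpha\}$. The objective $\|b\|_p^p = \sum_{i=1}^m |s_i - \alpha|^p$ is convex in $(s_1, \dots, s_m)$, so it attains its maximum over this polytope at a vertex. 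Enumerating the finitely many vertices (the origin and the axis points $(m+1)\alpha \, e^{(i)}$) yields maximal value $(m\alpha)^p + (m-1)\alpha^p = (m^p + m - 1)\alpha^p$, and hence $\|b\|_p \leq (m^p + m - 1)^{1/p} \alpha = \overline{\kappa} \epsilon$. The case $p = \infty$ is handled analogously, with $\overline{\kappa}(m,\infty) = m$ arising from the same vertex.

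The main obstacle is twofold: first, recognizing that the orthogonality $\mathbb{1}^{\mathsf{T}} Q = 0$ is what turns the $(m+1)$-dimensional decomposition into a tight $\ell^1$-constraint on $s$; and second, carrying out the vertex analysis carefully enough to produce the sharp multiplier $\overline{\kappa}$ rather than a crude triangle-inequality bound (as appeared in the exact-solution proof of Theorem~\ref{thm_exact_sol}, which yielded the looser factor $m+2$).
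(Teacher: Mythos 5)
Your proof is correct and follows the same overall route as the paper: pass to $Qy\in P[S]\subseteq\mathcal{P}$, expand via \eqref{eq0} and \eqref{eq_recession}, use the fact that $Q\mathbb{R}^m$ is the hyperplane $\{x:\trans{\mathbb{1}}x=0\}$ to pin down the $\mathbb{R}^{m+1}_+$-component, project with $\proj_{-1}$, and reduce everything to the extremal problem of maximizing $\Vert\sigma-\alpha\mathbf{1}\Vert_p$ over the simplex $\{\sigma\in\mathbb{R}^m_+:\trans{\mathbf{1}}\sigma\le(m+1)\alpha\}$ --- which after rescaling is exactly problem \eqref{op}. The one place you genuinely diverge is in solving that extremal problem: the paper's Lemma~\ref{lemma_binomial} establishes the superadditivity inequality $(a+b)^p\ge a^p+b^p$ and then runs a coordinate-merging induction to push all mass into a single coordinate, whereas you simply observe that the objective is convex and a convex function on a compact polytope attains its maximum at a vertex, then enumerate the $m+1$ vertices. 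Your argument is shorter and cleaner for $p<\infty$ and handles $p=\infty$ uniformly; the paper's route is more hands-on but yields the intermediate inequality \eqref{blabla}, which it reuses later (in Lemma~\ref{lemma_A5} of the appendix). Two small points worth making explicit in a final write-up: the passage from $p'\in\conv P[\bar S]$ to $p'=Q\bar y$ with $\bar y\in\conv\proj_y[\bar S]$ uses linearity of $Q$ together with $P[\bar S]=Q\,\proj_y[\bar S]$ from Lemma~\ref{lemma_minimizer}~(2); and the conclusion that $\bar S$ is a finite $(\overline{\kappa}\epsilon)$-solution of \eqref{CP} in the sense of \eqref{eq13} implicitly invokes Proposition~\ref{lemma_self-bounded} to know that \eqref{CP} is itself self-bounded. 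Neither is a gap, just bookkeeping.
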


\begin{proof}
We give the proof for $p\in [1, \infty )$. For $p = \infty$ the same steps work, only the expressions $(m^p + m - 1)^{\frac{1}{p}}$ and $\left( \frac{m^p + m - 1}{m+1} \right)^{\frac{1}{p}}$ need to be replaced by their value in the limit, $m$.
For the set $Q[Y] = P[S] \subseteq \mathcal{P} \cap Q [\mathbb{R}^m]$, according to~\eqref{eq0} and~\eqref{eq_recession}, we obtain
\begin{align*}
Q[Y] &\subseteq \left( \conv P[\bar{S}] + \mathcal{P}_{\infty} - \epsilon \left\lbrace \Vert \mathbb{1} \Vert_p^{-1} \mathbb{1} \right\rbrace \right) \cap Q [\mathbb{R}^m] \\
 &= \conv P[\bar{S}] + Q[(\cl Y)_{\infty}] + \left( \mathbb{R}^{m+1}_+ - \epsilon \left\lbrace \Vert \mathbb{1} \Vert_p^{-1} \mathbb{1} \right\rbrace \right) \cap Q [\mathbb{R}^m].
\end{align*}
A projection onto the first $m$ coordinates yields
\begin{align*}
Y  \subseteq \conv \proj_y [\bar{S}] + (\cl Y)_{\infty} + \proj_{-1} \left[ \left( \mathbb{R}^{m+1}_+ - \epsilon \left\lbrace \Vert \mathbb{1} \Vert_p^{-1} \mathbb{1} \right\rbrace \right) \cap Q [\mathbb{R}^m] \right].
\end{align*}
What remains is to show that the last set on the right-hand side is contained in an error-ball
\begin{align}
\label{eq_E06}
\proj_{-1} \left[ \left( \mathbb{R}^{m+1}_+ - \epsilon \left\lbrace \Vert \mathbb{1} \Vert_p^{-1} \mathbb{1} \right\rbrace \right) \cap Q [\mathbb{R}^m] \right]  
\subseteq 
B_{ \left( \frac{m^p + m - 1}{m+1} \right)^{\frac{1}{p}} \cdot \epsilon}^p,
\end{align} 
which would finish the proof.

Now, to see that~\eqref{eq_E06} holds true, note that by $Q [\mathbb{R}^m] = \{ x \in \mathbb{R}^{m+1} \vert \trans{\mathbb{1}} x  =0\}$ one has
\begin{align*}
\left( \mathbb{R}^{m+1}_+ - \epsilon \left\lbrace \Vert \mathbb{1} \Vert_p^{-1} \mathbb{1} \right\rbrace \right) \cap Q [\mathbb{R}^m] 
&= \epsilon \Vert \mathbb{1} \Vert_p^{-1} \cdot \left\lbrace r - \mathbb{1} \; \vert \; r \in \mathbb{R}^{m+1}_+, \trans{\mathbb{1}} (r - \mathbb{1}) = 0  \right\rbrace.
\end{align*}
A projection onto the first $m$ coordinates yields
\begin{align*}
\proj_{-1} &\left[ \left( \mathbb{R}^{m+1}_+ - \epsilon \left\lbrace \Vert \mathbb{1} \Vert_p^{-1} \mathbb{1} \right\rbrace \right) \cap Q [\mathbb{R}^m] \right] 
= \epsilon \cdot \Vert \mathbb{1} \Vert_p^{-1} \cdot \left\lbrace r - \mathbf{1} \in \mathbb{R}^m \; \vert \; r \in \mathbb{R}^m_+, \trans{\mathbf{1}} r \leq m+1  \right\rbrace  .
\end{align*}
The optimization problem
\begin{align}
\label{op}
\max \| r - \mathbf{1} \|_p \; \text{ s.t. } \; r \in \mathbb{R}^{m}_+, \trans{\mathbf{1}} r \leq m+1 
\end{align}
maximizes a convex objective over a compact polyhedron. As such, the maximum needs to be attained in one of the vertices of the polyhedron. Therefore, the optimal objective value of problem~\eqref{op} is $\| (m+1) \cdot e^{(1)} - \mathbf{1} \|_p = (m^p + m - 1)^{\frac{1}{p}}$, which shows $$\left\lbrace r - \mathbf{1} \in \mathbb{R}^m \; \vert \; r \in \mathbb{R}^m_+, \trans{\mathbf{1}} r \leq m+1  \right\rbrace \subseteq B_{(m^p + m - 1)^{\frac{1}{p}}}^{p}.$$ Multiplication with the constant $\epsilon \cdot \Vert \mathbb{1} \Vert_p^{-1}$ yields~\eqref{eq_E06}.

  \end{proof}

Both of the above theorems involve increasing the error tolerance proportionally to a certain multiplier. The multipliers $\underline{\kappa}$ and $\overline{\kappa}$ both depend on the dimension $m$ and on the $p$-norm. Of those two the dimension plays a more important role. One can verify that for all $p \in [1, \infty]$ the value of $\underline{\kappa} (m, p)$ lies between $m$ and $m+1$ and the value of $\overline{\kappa} (m, p)$ is bounded from above by $m$. The multipliers in the maximum norm correspond to the limits, i.e.~it holds $\underline{\kappa} (m, \infty ) = \lim\limits_{p \to \infty} \underline{\kappa} (m, p)$ and $\overline{\kappa} (m, \infty ) = \lim\limits_{p \to \infty} \overline{\kappa} (m, p)$. We list the values of the two multipliers for the three most popular norms, the Manhattan, the Euclidean and the maximum norm, in Table~\ref{tab1}.

\begin{table}[h]
\center
\caption{\label{tab1}  Multipliers $\underline{\kappa} (m, p)$ and $\overline{\kappa} (m, p)$ for  $p = 1, 2$ and $\infty$. }
\begin{tabular}{c c c  c c c c}
\hline \hline
 & p &														& $\;\;\;\;$ & $1$					& $2$						& $\infty$ \\ \hline
$\underline{\kappa}$ & $=$ & $m^{\frac{p-1}{p}} \cdot (m+1)^{\frac{1}{p}}$	&& $m+1$		& $\sqrt{m (m+1)}$	& $m$ \\
$\overline{\kappa}$ & $=$ & $\left( \frac{m^p + m - 1}{m+1} \right)^{\frac{1}{p}}$	&& $\frac{2m-1}{m+1}$	& $\sqrt{\frac{m^2+m-1}{m+1}}$	& $m$ \\ \hline \hline
\end{tabular}
\end{table}

Theorem~\ref{thm_CPsolMOCP} (respectively Theorem~\ref{thm_MOCPsolCP}) guarantees that increasing the tolerance $\underline{\kappa}$-fold (respectively $\overline{\kappa}$-fold) is sufficient. We could, however, ask if some smaller increase of the tolerance might not suffice instead. In Subsection~\ref{subsec_ex} below, we provide examples where the multipliers $\underline{\kappa}$ and $\overline{\kappa}$ are attained. Therefore, the results of Theorems~\ref{thm_CPsolMOCP} and~\ref{thm_MOCPsolCP} cannot be improved.

A finite $\epsilon$-solution of a (self-)bounded~\eqref{CVOP} depends not only on the tolerance $\epsilon$ and the underlying $p$-norm, but also on the direction $c \in \interior C$ used within the defining relation~\eqref{eq16}, respectively~\eqref{eq17}. As we stated at the beginning of this section, for the multi-objective problem~\eqref{MOCP} we use the direction $ \Vert \mathbb{1} \Vert^{-1} \mathbb{1} \in \interior \mathbb{R}^{m+1}_+$. Since this direction appears within the proofs of Theorems~\ref{thm_CPsolMOCP} and~\ref{thm_MOCPsolCP}, how much do our results depend on it? The structure of the results would remain unchanged regardless of the considered direction, only the multipliers $\underline{\kappa}$ and $\overline{\kappa}$ are direction-specific -- variants of relations~\eqref{eqN02} and~\eqref{eq_E06} hold for arbitrary fixed direction $r \in \interior \mathbb{R}^{m+1}_+$ with appropriately adjusted multipliers.

The last question that remains open is whether approximate solutions exist. In the following lemma we prove their existence for self-bounded convex projections.

\begin{lemma}
\label{lemma_sol_exist}
There exists a finite $\epsilon$-solution to a self-bounded convex projection~\eqref{CP} for any $\epsilon > 0$.
\end{lemma}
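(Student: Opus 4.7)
The plan is to exploit boundedness of $Y$ to produce a finite $\epsilon$-net lying inside $Y$, then lift each net point to a pre-image in $S$. Since $Y$ is bounded, $\cl Y$ is a compact subset of $\mathbb{R}^m$; we may assume $S \neq \emptyset$ (hence $Y \neq \emptyset$) since otherwise the notion of a finite $\epsilon$-solution, which requires a non-empty set $\bar{S}$, is vacuous.

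First, I would use compactness of $\cl Y$ to cover it by finitely many closed balls of radius $\epsilon / 2$ with centers $z^{(1)}, \ldots, z^{(k)} \in \cl Y$. For each center $z^{(i)}$, by the definition of the closure there exists $y^{(i)} \in Y$ with $\Vert z^{(i)} - y^{(i)} \Vert \leq \epsilon / 2$, so that
\begin{align*}
\{z^{(i)}\} + B_{\epsilon/2} \; \subseteq \; \{y^{(i)}\} + B_{\epsilon}.
\end{align*}
Concatenating these inclusions yields $Y \subseteq \cl Y \subseteq \{y^{(1)}, \ldots, y^{(k)}\} + B_{\epsilon}$, that is, a finite $\epsilon$-net of $Y$ consisting of points of $Y$ itself.

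Next, for each $i = 1, \ldots, k$, since $y^{(i)} \in Y$, I would choose some $x^{(i)} \in \mathbb{R}^n$ with $(x^{(i)}, y^{(i)}) \in S$ and set
\begin{align*}
\bar{S} = \{ (x^{(i)}, y^{(i)}) \; \vert \; i = 1, \ldots, k \} \; \subseteq \; S,
\end{align*}
which is a non-empty finite set. By construction $\proj_y [\bar{S}] = \{y^{(1)}, \ldots, y^{(k)}\}$, and using that a finite set is contained in its convex hull,
\begin{align*}
Y \; \subseteq \; \{y^{(1)}, \ldots, y^{(k)}\} + B_{\epsilon} \; \subseteq \; \conv \proj_y [\bar{S}] + B_{\epsilon},
\end{align*}
which is precisely the defining relation~\eqref{eq5} for a finite $\epsilon$-solution of a bounded~\eqref{CP}.

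There is no real obstacle here: the argument is essentially the observation that bounded subsets of $\mathbb{R}^m$ are totally bounded and admit finite $\epsilon$-nets consisting of points of the set itself. The only minor subtlety is insisting that the net points lie in $Y$ (not merely in $\cl Y$), which is what allows us to produce pre-images in $S$ and obtain a set $\bar{S}$ that actually belongs to $S$; this is handled by the elementary density argument in the first step.
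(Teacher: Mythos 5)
Your proof is correct, but it takes a genuinely different and more elementary route than the paper's. The paper proves Lemma~\ref{lemma_sol_exist} indirectly: it invokes Lemma~3.6 of~\cite{U18} on the compact set $Q(\cl Y)$ to obtain a finite $\frac{\epsilon}{2\underline{\kappa}}$-infimizer of the associated multi-objective problem~\eqref{MOCP}, perturbs its elements to feasible points of $S$ (paying an extra $\underline{\kappa}\delta$ in tolerance), and then transfers the result back to~\eqref{CP} via Theorem~\ref{thm_MOCPsolCP}. You instead work entirely in the $y$-space: total boundedness of the bounded set $Y$ gives a finite $\epsilon/2$-net of $\cl Y$, a density argument moves the net points into $Y$ at the cost of another $\epsilon/2$, and lifting to pre-images in $S$ yields a set $\bar{S}$ satisfying~\eqref{eq5} directly. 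Your argument is self-contained, avoids the external lemma and all multiplier bookkeeping, and is arguably the more natural proof of the statement as stated. What the paper's detour buys is the additional byproduct, noted immediately after the lemma, that a finite $\epsilon$-solution of a bounded problem~\eqref{MOCP} also exists without assuming a compact feasible set (in contrast to Proposition~4.3 of~\cite{LRU14}); your construction does not deliver that for free, though one could recover it by feeding your $\bar{S}$ into Theorem~\ref{thm_CPsolMOCP}. One minor point in your favour: you make explicit the feasibility assumption $S \neq \emptyset$ needed for $\bar{S}$ to be non-empty, which the paper leaves implicit.
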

\begin{proof}
This proof is based on Proposition 3.7 of~\cite{U18}. We use it to prove the existence of a finite $\frac{\epsilon}{\underline{\kappa}}$-solution of the associated problem~\eqref{MOCP} and then apply Theorem~\ref{thm_MOCPsolCP}. Fix $\epsilon > 0$ and set $\xi := \frac{\epsilon}{2\underline{\kappa}}$ and $\delta := \frac{\epsilon}{2 \underline{\kappa} \| \mathbb{1} \| }$. Since problem~\eqref{MOCP} is self-bounded, the upper image $\mathcal{P}$ and its recession cone $\mathcal{P}_{\infty}$ satisfy the assumptions of Proposition 3.7 of~\cite{U18}. Then, for a tolerance $\xi > 0$ there exists a finite set of points $\bar{A} \subseteq \mathcal{P}$ such that
\begin{align} 
\label{eq_E10}
\mathcal{P} \subseteq \conv \bar{A} + \mathcal{P}_{\infty} - \xi \{ \|\mathbb{1}\|^{-1} \mathbb{1} \}.
\end{align}
Since $\mathcal{P} = \cl (P[S] + \mathbb{R}^{m+1}_+)$, for each $\bar{a} \in \bar{A}$ there exists $(\bar{x}, \bar{y}) \in S$ such that $\bar{a} \in P(\bar{x}, \bar{y})  + \mathbb{R}^{m+1}_+ + B_{\delta}$. Denote by $\bar{S} \subseteq S$ the (finite) collection of such feasible points $(\bar{x}, \bar{y}) \in S$, one per each element of $\bar{A}$. We obtain
\begin{align}
\label{eq_E11}
\bar{A} \subseteq P[\bar{S}] + \mathbb{R}^{m+1}_+ + B_{\delta} \subseteq P[\bar{S}] + \mathbb{R}^{m+1}_+ - (\delta \|\mathbb{1}\|) \{ \|\mathbb{1}\|^{-1} \mathbb{1} \}.
\end{align}
Equations~\eqref{eq_E10} and~\eqref{eq_E11} jointly give
\begin{align*}
\mathcal{P} \subseteq \conv P[\bar{S}]  + \mathcal{P}_{\infty} + \mathbb{R}^{m+1}_+ - \left( \xi + \delta \|\mathbb{1}\| \right) \{ \|\mathbb{1}\|^{-1} \mathbb{1} \} = \conv P[\bar{S}]  + \mathcal{P}_{\infty} -  \frac{\epsilon}{\underline{\kappa}} \{ \|\mathbb{1}\|^{-1} \mathbb{1} \},
\end{align*}
which shows that $\bar{S}$ is a finite $\frac{\epsilon}{\underline{\kappa}}$-solution of~\eqref{MOCP}. According to Theorem~\ref{thm_MOCPsolCP}, $\bar{S}$ is a finite $\epsilon$-solution of~\eqref{CP}.
  \end{proof}
Lemma~\ref{lemma_sol_exist} also proves the existence of a finite $\epsilon$-solution of a self-bounded problem~\eqref{MOCP}. Unlike Proposition 4.3 of~\cite{LRU14}, this result does not require a compact feasible set. In practice, however, we are restricted by the assumptions of the solvers.

\subsection{Examples}
\label{subsec_ex}
We start with three theoretical examples, one for each theorem of the previous subsection, before providing two numerical examples. For simplicity, Examples~\ref{ex_bounded1} and~\ref{ex_bounded2} contain a trivial projection, i.e.~the  identity. One could modify the feasible sets to include additional dimensions, but since these examples are intended to illustrate theoretical properties, we chose to keep them as simple as possible.

 First, we look at exact solutions for which equivalence was proven in Theorem~\ref{thm_exact_sol}. Example~\ref{ex_solution} illustrates that a solution of~\eqref{MOCP} solves~\eqref{CP} only 'up to the closure', i.e.~the closure in~\eqref{eq4} mirrors the closure in~\eqref{eq1}. 
\begin{example}
\label{ex_solution}
Consider the feasible set $S = \{ (x, y) \; \vert \; x^2 + y^2 \leq 1 \}$, which projects onto $Y = \{ y \; \vert \; x^2 + y^2 \leq 1 \} = [-1, 1]$. 
Correspondingly, $P[S] = \{y \cdot (1, -1) \; \vert \; y \in [-1, 1] \}$. 
The set 
$$\bar{S} = \{ (x,y) \; \vert \; x^2 + y^2 = 1 \} \backslash \{(0, -1)\}$$
is a solution of the multi-objective problem. However, projected onto its $y$-element it gives $\proj_y [\bar{S}] = (-1, 1]$. Therefore, $\bar{S}$ is a solution of the projection but $Y \neq \conv \proj_y [\bar{S}]$.
\end{example}

Second, we give an example where the multiplier $\underline{\kappa}$ introduced in Theorem~\ref{thm_CPsolMOCP} is attained. Namely, we provide a convex projection and its finite $\epsilon$-solution $\bar{S}$. This $\bar{S}$ is not a finite $\bar{\epsilon}$-solution of the associated multi-objective problem for any $\bar{\epsilon} < \underline{\kappa} \epsilon$. This shows that Theorem~\ref{thm_CPsolMOCP} would not hold with any smaller multiplier. The example holds for all dimensions $m$ and for all $p$-norms, so $p \in [1, \infty]$ is explicitly denoted.
\begin{example}
\label{ex_bounded1}
Let us start with an example in dimension $m=2$.
Consider the approximate solution $\bar{S} = \{ (0,0), (1, 0), (0, 1)\}$ of the projection
$$Y = S = \{ (y_1, y_2) : y_1^2 + y_2^2 \leq 1, y_1 \geq 0, y_2 \geq 0 \}.$$
The smallest tolerance for which the set $\bar{S}$ is a finite $\epsilon^{(p)}$-solution of this convex projection (under the $p$-norm) is  $\epsilon^{(p)} = \| (\frac{1}{\sqrt{2}}, \frac{1}{\sqrt{2}}) - (\frac{1}{2}, \frac{1}{2}) \|_p = \frac{\sqrt{2}-1}{2} \cdot 2^{\frac{1}{p}}$. 
Now consider the associated multi-objective problem.
To cover the vector $\left( \frac{1}{\sqrt{2}}, \frac{1}{\sqrt{2}}, -\sqrt{2} \right) \in P[S]$ by the set $\conv P[\bar{S}] - x \{\mathbb{1}\} + \mathbb{R}^3_+$ we need $x \geq \sqrt{2} - 1$. Considering normalization by $\Vert \mathbb{1} \Vert_p = 3^{\frac{1}{p}}$, the set $\bar{S}$ is a finite $\bar{\epsilon}^{(p)}$-solution of the multi-objective problem (under the $p$-norm) only if $\bar{\epsilon}^{(p)} \geq (\sqrt{2}-1) \cdot 3^{\frac{1}{p}} = \underline{\kappa} \cdot \epsilon^{(p)}$, where $\underline{\kappa} = 2^{\frac{p-1}{p}}\cdot 3^{\frac{1}{p}}$.

For a self-bounded example consider $Y = S = \{ (y_1, y_2) : y_1^2 + y_2^2 \leq 1, y_1 \geq 0, y_2 \geq 0 \} + \cone \{(0, -1)\}$ with the same approximate solution.

The same structure yields an example also in the $m$-dimensional space for any $m \geq 2$. Consider the approximate solution $\bar{S} = \{ \mathbb{0}, e^{(1)}, \dots, e^{(m)}\}$ of the projection
$$Y = S = \left\lbrace y \in \mathbb{R}^m_+ : \sum_{i=1}^m y_i^2 \leq 1 \right\rbrace.$$
The smallest tolerance for which $\bar{S}$ is a finite $\epsilon^{(p)}$-solution of this convex projection (under the $p$-norm) is $\epsilon^{(p)} = \| \frac{1}{\sqrt{m}} \mathbb{1} - \frac{1}{m} \mathbb{1} \| = \frac{\sqrt{m} - 1}{m} m^{\frac{1}{p}}.$ The upper image of the associated multi-objective problem contains the vector $\left( \frac{1}{\sqrt{m}}, \dots, \frac{1}{\sqrt{m}}, -\sqrt{m} \right)$. Therefore, the set $\bar{S}$ is a finite $\bar{\epsilon}^{(p)}$-solution of the multi-objective problem (under the $p$-norm) only if $\bar{\epsilon}^{(p)} \geq (\sqrt{m}-1) \cdot (m+1)^{\frac{1}{p}}$, where $(\sqrt{m}-1) \cdot (m+1)^{\frac{1}{p}} = \underline{\kappa} \cdot \epsilon^{(p)}$.
\end{example}

Third, we give an example where the multiplier $\overline{\kappa}$ introduced in Theorem~\ref{thm_MOCPsolCP} is attained. Here a multi-objective problem (associated to a given projection) and its finite $\epsilon$-solution $\bar{S}$ are given. We show that this $\bar{S}$ is a finite $\bar{\epsilon}$-solution of the projection only for $\bar{\epsilon} \geq \overline{\kappa} \epsilon$. This shows that Theorem~\ref{thm_MOCPsolCP} would not hold with a smaller multiplier. The $p$-norm is again explicitly denoted.
\begin{example}
\label{ex_bounded2}
Let us start with an example in dimension $m=2$.
Consider the projection $Y = S = \conv \{ (0,0), (2, -1) \}$ with the associated upper image $\mathcal{P} = \conv \{ (0,0,0), (2, -1, -1) \} + \mathbb{R}^3_+$. 
The smallest tolerance for which the set $\bar{S} = \{ (0,0) \}$ is a finite $\epsilon^{(p)}$-solution of the multi-objective problem (under the $p$-norm) is $\epsilon^{(p)} = \Vert \mathbb{1} \Vert_p = 3^{\frac{1}{p}}$. On the other hand, the set $\bar{S}$ is a finite $\bar{\epsilon}^{(p)}$-solution of the convex projection (under the $p$-norm) only for a tolerance $\bar{\epsilon}^{(p)} \geq \Vert (2, -1) \Vert_p = (2^p + 1)^{\frac{1}{p}} = \overline{\kappa} \cdot \epsilon^{(p)}$, where $\overline{\kappa} = \left( \frac{2^p + 1}{3} \right)^{\frac{1}{p}}$.

For a self-bounded example consider $Y = S = \conv \{ (0,0), (2, -1) \} + \cone \{(-1, 0)\}$ with the same approximate solution.

This example can be extended to any dimension $m \geq 2$. Consider the projection $Y = S = \conv \{ \mathbb{0}, (m, -1, \dots, -1) \} \subseteq \mathbb{R}^m$, the associated multi-objective problem with upper image $\mathcal{P} = \conv \{ \mathbb{0}, (m, -1, \dots, -1) \} + \mathbb{R}^{m+1}_+$, and the approximate solution $\bar{S} = \{ \mathbb{0}\}$. The set $\bar{S}$ is a finite $\epsilon^{(p)}$-solution of the multi-objective problem (under the $p$-norm) for $\epsilon^{(p)} = \| \mathbb{1} \|_p = (m+1)^\frac{1}{p}$. However, it is a finite $\bar{\epsilon}^{(p)}$-solution of the convex projection (under the $p$-norm) only for a tolerance $\bar{\epsilon}^{(p)} \geq \| (m, -1, \dots, -1) \|_p = (m^p + m -1)^\frac{1}{p}$, where $(m^p + m -1)^\frac{1}{p} = \overline{\kappa} \epsilon^{(p)}$.
\end{example}

Finally, we provide two numerical examples. 
As both have a compact feasible set, one could apply also the algorithm in~\cite{SZC18} to approximately solve the convex projection problem directly. As here, our motivation is to 
illustrate the theory of the last subsection, we will use in both cases the algorithm of~\cite{LRU14} for convex vector optimization problems to solve the convex projection problem by applying Theorem~\ref{thm_MOCPsolCP}.
\begin{example}
Consider the feasible set
\begin{align*}
S = \left\lbrace (y_1, y_2, x) \; \vert \; (y_1 - 1)^2 + \frac{(y_2 - 2)^2}{2^2} + (x - 1)^2 \leq 1, \right. \;\; \\
\left. \frac{(y_1-2)^2}{2^2} + (y_2 - 1)^2 + \frac{(x - 2)^2}{2^2} \leq 1 \right\rbrace
\end{align*}
consisting of an intersection of two three-dimensional ellipses. We project this set onto the first two coordinates, so we are interested in the set $Y = \{ (y_1, y_2) \; \vert \; (y_1, y_2, x) \in S \}$. We used the associated multi-objective problem to compute an approximation of $Y$. The algorithm of~\cite{LRU14} was used for the numerical computations. In Figure~\ref{fig1} we depict the (inner) approximations obtained for various error tolerances of the algorithm.
\end{example}
\begin{figure*}[h]
\includegraphics[width = \textwidth]{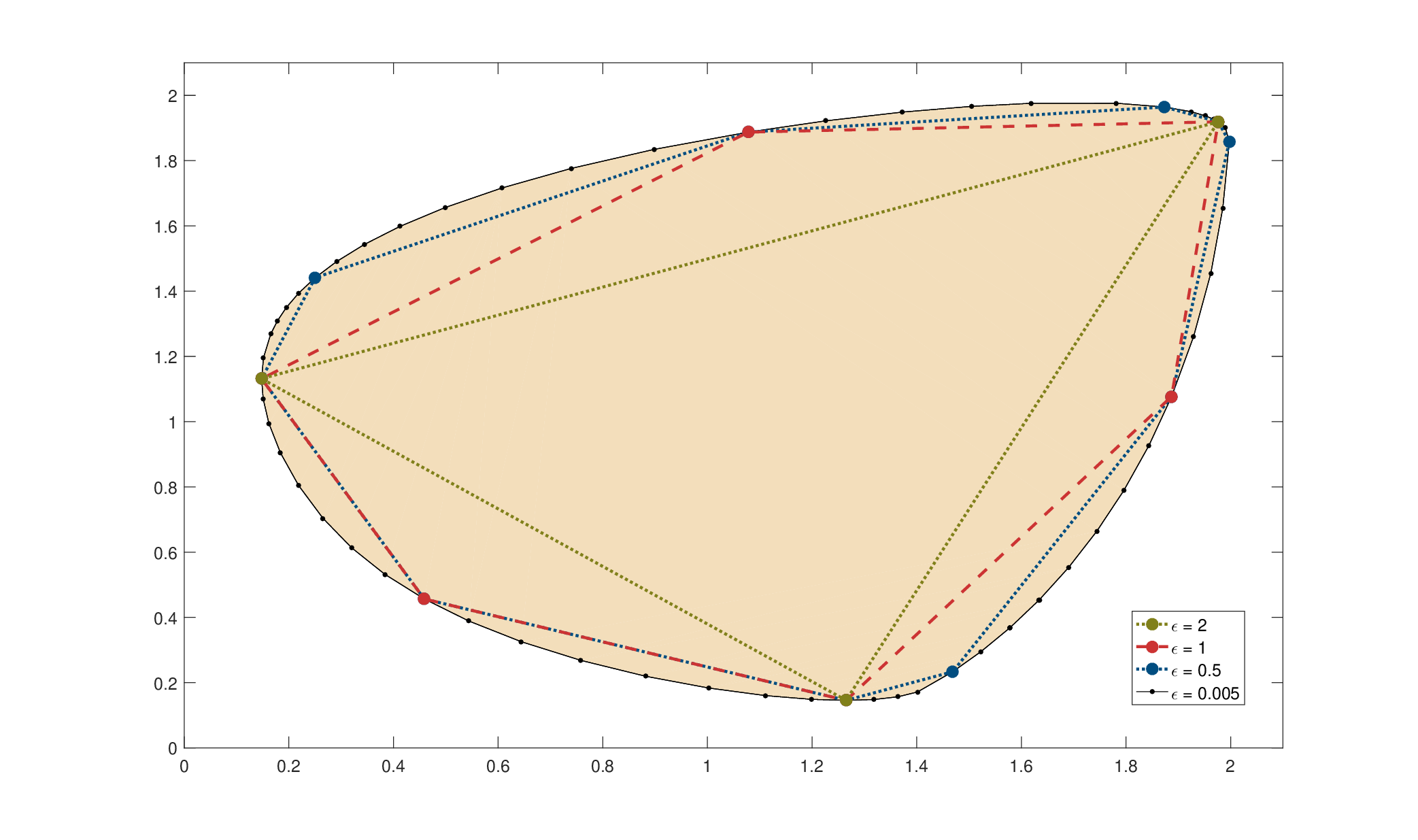}
\caption{\label{fig1}Intersection of two three-dimentional ellipses projected on the first two coordinates. Approximations obtained for various error tolerances of the associated multi-objective problem.}
\end{figure*}

\begin{example}
We add one more dimension. This means, we are computing a three-dimensional set $Y = \left\lbrace (y_1, y_2, y_3) \; \vert \; (y_1, y_2, y_3, x) \in S \right\rbrace$ obtained by projecting an intersection of two four-dimensional ellipses
\begin{align*}
S = \left\lbrace (y_1, y_2, y_3,  x) \; \vert \; (y_1 - 1)^2 + \frac{(y_2 - 2)^2}{2^2} + (y_3 - 1)^2 + \frac{(x-2)^2}{2^2} \leq 1, \right. \;\;\; \\
\left. \frac{(y_1-2)^2}{2^2} + (y_2 - 1)^2 + \frac{(y_3 - 2)^2}{2^2} + (x-1)^2 \leq 1 \right\rbrace
\end{align*}
onto the first three coordinates. Figure~\ref{fig2} contains an (inner) approximation of the set $Y$ obtained by solving the associated multi-objective problem with an error tolerance $\epsilon = 0. 01$. Once again the algorithm of~\cite{LRU14} was used.
\end{example}
\begin{figure*}[h]
\includegraphics[width = \textwidth]{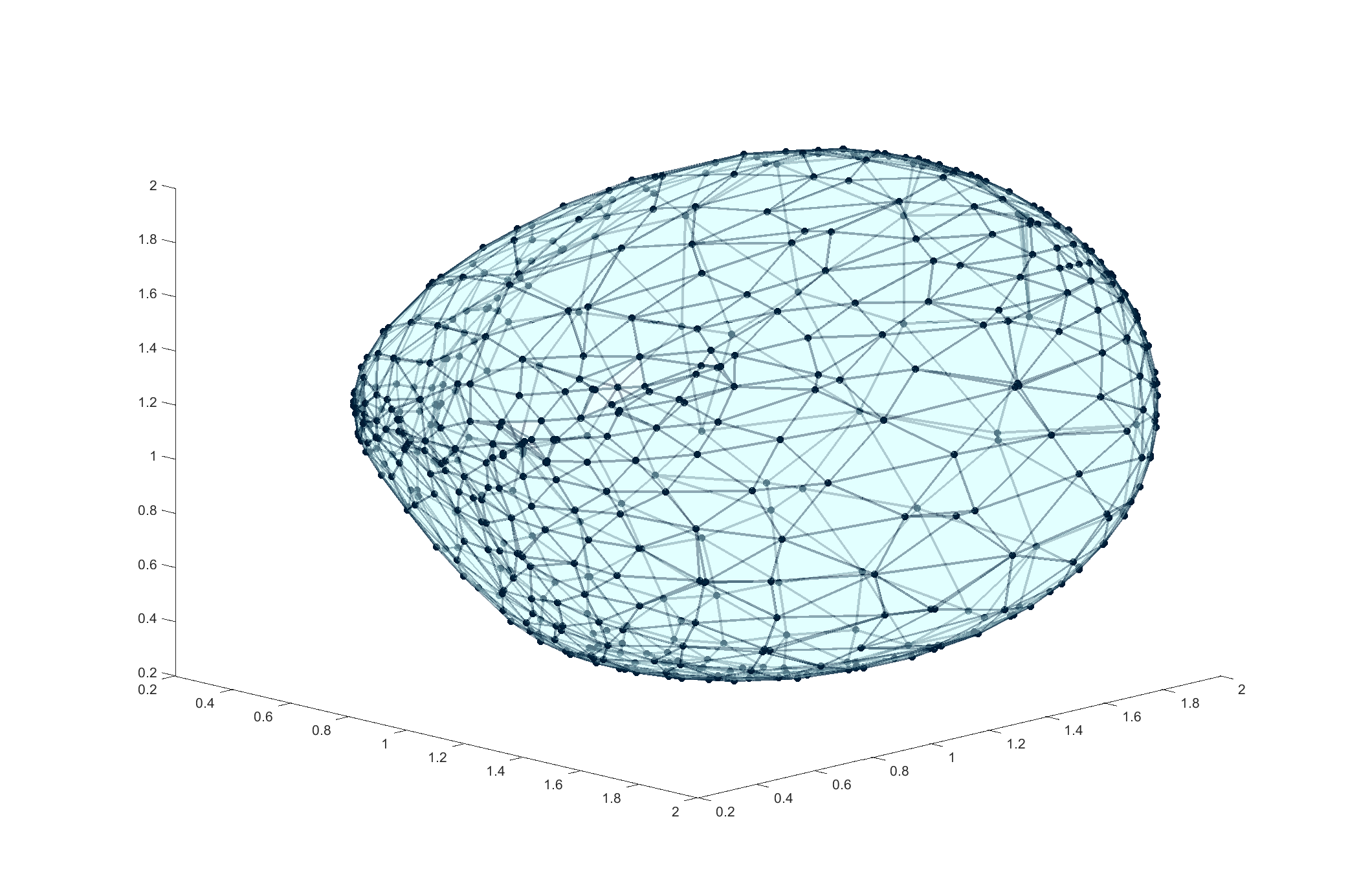}
\caption{\label{fig2}Intersection of two four-dimensional ellipses projected on the first three coordinates. Approximation obtained for a tolerance $\epsilon = 0.01$.}
\end{figure*}

\section{Convex Projection Corresponding to~\eqref{CVOP}}
\label{sec_CVOP_CP}
In the polyhedral case it is also possible to construct a projection associated to any given vector linear program.
In~\cite{LW16}, it is shown that if a solution of the vector linear program exists, it can be obtained from a solution of the associated projection. This, together with the connection between polyhedral projection and its associated multi-objective problem, led to an equivalence between polyhedral projection, multi-objective linear programming and vector linear programming in~\cite{LW16}. This also allows to construct a multi-objective linear program corresponding to any given vector linear program, where the dimension of the objective space is increased only by one.

We will now investigate this in the convex case. 
We start with a general convex vector optimization problem and construct an associated convex projection problem, once again taking inspiration from the polyhedral case in~\cite{LW16}. Analogously to the previous section, we investigate the connection between the two problems, their properties and their solution. While the connection does exist, two problems occur, which show that the equivalence obtained in the polyhedral case cannot be generalized to the convex case in full extent.
Firstly, the associated convex projection is never a bounded problem. So even if a bounded~\eqref{CVOP} is given, its associated convex projection (and thus, its associated multi-objective convex optimization problem) is just self-bounded. Recall that while solvers for a bounded~\eqref{CVOP} are available, there is not yet a solver for self-bounded problems. Secondly, and even more severely, the convex projection provides only (exact or approximate) infimizers of the convex vector optimization problem, but not (exact or approximate) solutions. For exact solutions one can provide conditions to resolve this issue (see Lemma~\ref{lemma_construct_sol} below). This is, however, not possible for the in practice more important approximate solutions.

We now deduce these results in detail.
Recall from Section~\ref{sec_CVOP} the convex vector optimization problem
\begin{align*}
\tag{CVOP}
\begin{split}
\min \Gamma (x) \text{ with respect to } \leq_C \text{ subject to } x \in \mathcal{X}
\end{split}
\end{align*}
with its feasible set $\mathcal{X}$ and its upper image $\mathcal{G} = \cl (\Gamma[\mathcal{X}] + C).$
To obtain an associated projection, we define the set
$S_a = \{ (x, y) \; \vert \; x \in \mathcal{X},   y \in \{\Gamma (x)\} + C\}$, again motivated by~\cite{LW16}. The projection problem with this feasible set is
\begin{align}
\label{Pv}
\text{compute } Y_a = \proj_y [ S_a ] = \{ y  \; \vert \; \exists x \in \mathcal{X}, \Gamma (x) \leq_C y \}.
\end{align}
Clearly,~\eqref{Pv} is feasible if and only if~\eqref{CVOP} is feasible. Note that if the feasible set $\mathcal{X}$ is given via a collection of inequalities, then the new feasible set $S_a$ can be expressed similarly. The following lemma provides convexity and establishes the connection between~\eqref{CVOP} and~\eqref{Pv}.

\begin{lemma}
\label{lemmaYa}
The set $S_a$ is convex and, therefore, the problem~\eqref{Pv} is a convex projection.
Additionally, $Y_a = \Gamma [\mathcal{X}] + C$ and $\mathcal{G} = \cl Y_a$.
\end{lemma}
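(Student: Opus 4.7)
The plan is to verify the three assertions essentially by unpacking definitions, with the only nontrivial step being convexity of $S_a$, which relies on $C$-convexity of $\Gamma$ together with convexity of $\mathcal{X}$ and of the cone $C$.

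First, I would prove convexity of $S_a$. Take two points $(x^{(1)}, y^{(1)}), (x^{(2)}, y^{(2)}) \in S_a$ and $\lambda \in [0,1]$. Set $x^\lambda = \lambda x^{(1)} + (1-\lambda) x^{(2)}$ and $y^\lambda = \lambda y^{(1)} + (1-\lambda) y^{(2)}$. Convexity of $\mathcal{X}$ gives $x^\lambda \in \mathcal{X}$. By definition of $S_a$ there exist $c^{(1)}, c^{(2)} \in C$ with $y^{(i)} = \Gamma(x^{(i)}) + c^{(i)}$, so
\begin{align*}
y^\lambda - \Gamma(x^\lambda) = \bigl(\lambda \Gamma(x^{(1)}) + (1-\lambda)\Gamma(x^{(2)}) - \Gamma(x^\lambda)\bigr) + \bigl(\lambda c^{(1)} + (1-\lambda) c^{(2)}\bigr).
\end{align*}
The first bracket lies in $C$ by $C$-convexity of $\Gamma$, the second lies in $C$ by convexity of $C$, and their sum is in $C$ since $C$ is a convex cone. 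Hence $y^\lambda \in \Gamma(x^\lambda) + C$, so $(x^\lambda, y^\lambda) \in S_a$. This makes~\eqref{Pv} a convex projection problem.

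Second, the identity $Y_a = \Gamma[\mathcal{X}] + C$ is immediate from the definition of $Y_a$: it reads
\begin{align*}
Y_a = \bigcup_{x \in \mathcal{X}} (\Gamma(x) + C) = \{\Gamma(x) + c \mid x \in \mathcal{X},\, c \in C\} = \Gamma[\mathcal{X}] + C.
\end{align*}
Finally, by the very definition $\mathcal{G} = \cl(\Gamma[\mathcal{X}] + C)$ in Section~\ref{sec_CVOP}, we immediately obtain $\mathcal{G} = \cl Y_a$. The only step requiring any argument is the convexity verification, and it is routine given the $C$-convexity assumption on $\Gamma$; no obstacle is anticipated.
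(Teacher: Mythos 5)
Your proof is correct and follows essentially the same route as the paper's: convexity of $S_a$ from convexity of $\mathcal{X}$, $C$-convexity of $\Gamma$ and convexity of the cone $C$, and the remaining identities read off directly from the definitions of $Y_a$ and $\mathcal{G}$. You merely spell out the convex-combination computation that the paper leaves implicit.
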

\begin{proof}
Convexity of $S_a$ follows from the convexity of $\mathcal{X}$, the $C$-convexity of $\Gamma$ and the convexity of $C$.
For the rest consider $Y_a = \{ y \; \vert \; \exists x \in \mathcal{X},  y \in \{\Gamma (x)\} + C\} = \Gamma [\mathcal{X}] + C.$
  \end{proof}

Given this close connection between the sets $\mathcal{G}$ and $Y_a$, we expect a similar connection to exist between the properties and the solutions concepts of~\eqref{CVOP} and~\eqref{Pv}. Since the set $Y_a$ contains the (shifted) ordering cone $C$, the projection problem~\eqref{Pv} cannot be bounded. But still, a bounded problem~\eqref{CVOP} has (by Lemma~\ref{lemma_self-bd}) its counterpart in the properties of the (self-bounded) associated projection~\eqref{Pv}, namely $(\cl Y_a)_{\infty} = \cl C$. We investigate the connection between the two problems in Theorems~\ref{thm_sol_CVOP1} and~\ref{thm_sol_CVOP2}. Recall from Definition~\ref{def_CVOP_sol} that for a convex vector optimization an (exact or approximate) solution is an (exact or approximate) infimizer consisting of minimizers. 
Recall also that the direction $c \in \interior C$ appearing in Definition~\ref{def_CVOP_sol} is assumed to be normalized, i.e.~$\Vert c \Vert = 1$.

\begin{theorem}
\begin{enumerate}
\label{thm_sol_CVOP1}
\item If $\bar{X}\subseteq \mathcal{X}$ is an infimizer of~\eqref{CVOP}, then $\bar{S} := \{ (x,y) \; \vert \; x \in \bar{X}, y \in \{\Gamma(x)\} + C \}$ is a solution of the associated projection~\eqref{Pv}.

\item If the problem~\eqref{CVOP} is self-bounded, then the associated projection~\eqref{Pv} is also self-bounded. If, additionally, the problem~\eqref{CVOP} is bounded, then $(\cl Y_a)_{\infty} = \cl C$.

\item  Let~\eqref{CVOP} be bounded or self-bounded and let $\bar{\bar{X}} \subseteq \mathcal{X}$ be a finite $\epsilon$-infimizer of~\eqref{CVOP}. Then, $\bar{\bar{S}} := \{ (x, \Gamma (x)) \; \vert \; x \in \bar{\bar{X}} \}$ is a finite $\epsilon$-solution of the associated projection~\eqref{Pv}.
\end{enumerate}
\end{theorem}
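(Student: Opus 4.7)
All three parts translate conditions on the upper image $\mathcal{G}$ of~\eqref{CVOP} into conditions on the projected set $Y_a$ of~\eqref{Pv}. The key bridge is the identity $\mathcal{G} = \cl Y_a$ from Lemma~\ref{lemmaYa}, together with the two elementary computations $\proj_y[\bar{S}] = \Gamma[\bar{X}] + C$ and $\proj_y[\bar{\bar{S}}] = \Gamma[\bar{\bar{X}}]$ (both follow directly from the definitions of $\bar{S}$ and $\bar{\bar{S}}$, and in passing also show $\bar{S} \subseteq S_a$ and $\bar{\bar{S}} \subseteq S_a$ since $0 \in C$).

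For Part~(1), I would substitute $\proj_y[\bar{S}] = \Gamma[\bar{X}] + C$ into the infimizer identity $\mathcal{G} = \cl\conv(\Gamma[\bar{X}] + C)$ of Definition~\ref{def_CVOP_sol}, yielding $\mathcal{G} = \cl\conv \proj_y[\bar{S}]$. Combined with $Y_a \subseteq \cl Y_a = \mathcal{G}$ from Lemma~\ref{lemmaYa}, this is exactly the solution condition~\eqref{eq4} for~\eqref{Pv}.

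For Part~(2), since $\cl Y_a = \mathcal{G}$, the recession cones coincide: $(\cl Y_a)_{\infty} = \mathcal{G}_{\infty}$. Self-boundedness of~\eqref{CVOP} provides $q \in \mathbb{R}^m$ with $\mathcal{G} \subseteq \{q\} + \mathcal{G}_{\infty}$, so $Y_a \subseteq \mathcal{G} \subseteq \{q\} + (\cl Y_a)_{\infty}$, which meets Definition~\ref{def_CP_bounded} with $k=1$ and the single point $y^{(1)} = q$. The condition $Y_a \neq \mathbb{R}^m$ follows from $Y_a \subseteq \mathcal{G} \neq \mathbb{R}^m$. If additionally~\eqref{CVOP} is bounded, Lemma~\ref{lemma_self-bd} gives $\mathcal{G}_{\infty} = \cl C$, hence $(\cl Y_a)_{\infty} = \cl C$.

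For Part~(3), I would first unify the bounded and self-bounded cases by observing that whenever~\eqref{CVOP} is (self-)bounded, the $\epsilon$-infimizer condition implies
\begin{equation*}
\mathcal{G} \subseteq \conv \Gamma[\bar{\bar{X}}] + \mathcal{G}_{\infty} - \epsilon\{c\},
\end{equation*}
since in the bounded case $C \subseteq \cl C = \mathcal{G}_{\infty}$ by Lemma~\ref{lemma_self-bd}, and in the self-bounded case the inclusion is by definition~\eqref{eq17}. Using $Y_a \subseteq \mathcal{G}$, $\mathcal{G}_{\infty} = (\cl Y_a)_{\infty}$, $\proj_y[\bar{\bar{S}}] = \Gamma[\bar{\bar{X}}]$, and the fact that $\|c\| = 1$ implies $-\epsilon c \in B_{\epsilon}$, I obtain
\begin{equation*}
Y_a \subseteq \conv \proj_y[\bar{\bar{S}}] + (\cl Y_a)_{\infty} + B_{\epsilon},
\end{equation*}
which is exactly~\eqref{eq13}, the finite $\epsilon$-solution condition for the self-bounded~\eqref{Pv} established in Part~(2).

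There is essentially no obstacle here; the proof is a sequence of routine translations via $\mathcal{G} = \cl Y_a$. The only point that requires mild care is handling the bounded and self-bounded subcases uniformly in Part~(3), which is done by first absorbing $C$ into $\mathcal{G}_{\infty}$ through Lemma~\ref{lemma_self-bd}.
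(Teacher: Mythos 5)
Your proposal is correct and follows essentially the same route as the paper: all three parts are obtained by translating through $\mathcal{G} = \cl Y_a$ from Lemma~\ref{lemmaYa}, with Part~(3) unifying the bounded and self-bounded cases via $C \subseteq \mathcal{G}_{\infty}$ and $-\epsilon c \in B_{\epsilon}$ exactly as the paper does. Your write-up is in fact slightly more explicit than the paper's (e.g.\ spelling out $k=1$ and $Y_a \neq \mathbb{R}^m$ in Part~(2)), but there is no substantive difference.
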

\begin{proof}
\begin{enumerate}
\item The way the set $\bar{S}$ is constructed implies feasibility as well as $\proj_y [\bar{S}] = \Gamma[\bar{X}] + C$.
Since $\bar{X}$ is a infimizer of~\eqref{CVOP}, it follows that $Y_a \subseteq  \mathcal{G} \subseteq  \cl \left( \conv \Gamma[\bar{X}] + C \right)  = \cl \conv \proj_y [\bar{S}]$.

\item Self-boundedness of~\eqref{Pv} follows directly from $\mathcal{G} = \cl Y_a$.  Boundedness of~\eqref{CVOP} implies $(\cl Y_a)_{\infty} = \mathcal{G}_{\infty} = \cl C$, see Lemma~\ref{lemma_self-bd}.

\item For a finite $\epsilon$-infimizer $\bar{\bar{X}}$ of~\eqref{CVOP} it holds $\mathcal{G} \subseteq \conv \Gamma [\bar{\bar{X}}] + \mathcal{G}_{\infty} -\epsilon \{c\}$ (both in the bounded and in the self-bounded case as $C \subseteq \mathcal{G}_{\infty}$). 
Since $\epsilon c \in B_{\epsilon}$ and $\Gamma [\bar{\bar{X}}] = \proj_y [\bar{\bar{S}}]$ it follows
\begin{align*}
Y_a \subseteq \mathcal{G} \subseteq \conv \Gamma [\bar{\bar{X}}] + \mathcal{G}_{\infty} -\epsilon \{c\} \subseteq \conv \proj_y [\bar{\bar{S}}] + (\cl Y_a)_{\infty} + B_{\epsilon}.
\end{align*}  
\end{enumerate}
\end{proof}

Before moving on, we illustrate that an infimizer (or even a solution) $\bar{X}$ of~\eqref{CVOP} can lead towards a solution $\bar{S}$ that solves the associated projection~\eqref{Pv} only 'up to the closure'. Compare this also to Example~\ref{ex_solution}.
\begin{example}
\label{ex_solution2}
Consider a set $\Gamma [\mathcal{X}] = \mathcal{X} = \{ (x_1, x_2) \; : \; x_1^2 + x_2^2 \leq 1 \}$ with the ordering cone $C = \mathbb{R}^2_+$. The set $\bar{X} = \{ (x_1, x_2) \; : \; x_1^2 + x_2^2 = 1, x_1, x_2 < 0 \}$ is a solution of~\eqref{CVOP}. The corresponding set $\bar{S}$ projects onto $\proj_y [\bar{S}] = \bar{X} + \mathbb{R}^2_+$. It is a solution of~\eqref{Pv}, however, the closure in~\eqref{eq4} is essential as $Y_a \neq \conv \proj_y [\bar{S}]$.
\end{example}

Theorem~\ref{thm_sol_CVOP1} shows how to construct (exact or approximate) solutions of the associated projection from the (exact or approximate) infimizers of the vector optimization problem. Now we look at the other direction. We will show that, given an (exact or approximate) solution $\bar{S}$ of~\eqref{Pv}, the set $\bar{X} = \proj_x [\bar{S}]$ is an (exact or approximate) infimizer of~\eqref{CVOP}. This set $\bar{X}$, however, does not need to consist of minimizers, see Example~\ref{ex_CVOP_sol} below.

In the case of approximate solutions, the tolerance depends on the direction $c \in \interior C$. For this purpose denote
\begin{align}
\label{eq19}
\delta_c := \sup \; \{ \delta > 0 \; \vert \; \{c\} + B_{\delta} \subseteq C \}.
\end{align}
The quantity $\delta_c$ is strictly positive ($c$ is an interior element), finite (the cone $C$ is pointed) and depends on the underlying norm. For example, for the standard ordering cone $C = \mathbb{R}^m_+$ and the direction $\Vert \mathbb{1} \Vert^{-1} \mathbb{1}$ we have $\delta_{ \Vert \mathbb{1} \Vert^{-1} \mathbb{1} } =  \Vert \mathbb{1} \Vert^{-1}$.

\begin{theorem}
\label{thm_sol_CVOP2}
\begin{enumerate}
\item If $\bar{S} \subseteq S_a$ is a solution of~\eqref{Pv}, then $\bar{X} := \proj_x [\bar{S}]$ is an infimizer of~\eqref{CVOP}. 

\item If the problem~\eqref{Pv} is self-bounded, then also the problem~\eqref{CVOP} is self-bounded. If, additionally, $(\cl Y_a)_{\infty} = \cl C$, then the problem~\eqref{CVOP} is bounded.

\item Let the problem~\eqref{Pv} be self-bounded and let $\bar{S} \subseteq S_a$ be a finite $\epsilon$-solution of~\eqref{Pv}. Then, $\bar{X} := \proj_x [\bar{S}]$ is a finite $\tilde{\epsilon}$-infimizer of~\eqref{CVOP} for any tolerance $\tilde{\epsilon} > \dfrac{\epsilon}{\delta_c}$. 
\end{enumerate}
\end{theorem}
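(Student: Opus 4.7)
The three parts all rest on a single elementary observation: every $(x,y)\in\bar S\subseteq S_a$ satisfies $y\in\Gamma(x)+C$, which together with $\bar X=\proj_x[\bar S]\subseteq\mathcal X$ gives the sandwich
\[ \proj_y[\bar S]\subseteq \Gamma[\bar X]+C\subseteq Y_a, \]
and Lemma~\ref{lemmaYa} supplies $\mathcal G=\cl Y_a$ (hence also $\mathcal G_\infty=(\cl Y_a)_\infty$). For part~(1), feeding the first inclusion into the solution property $Y_a\subseteq\cl\conv\proj_y[\bar S]$ and taking closures yields $\mathcal G\subseteq\cl\conv(\Gamma[\bar X]+C)$, while the second inclusion together with $\mathcal G$ being closed and convex gives the reverse direction. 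Hence $\bar X$ is an infimizer of~\eqref{CVOP}.

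For part~(2), I first argue that $Y_a\neq\mathbb R^m$ forces $\mathcal G\neq\mathbb R^m$: a convex set whose closure equals $\mathbb R^m$ must have nonempty interior, and for convex sets the interior coincides with the interior of the closure, so $Y_a$ itself would be all of $\mathbb R^m$. Taking closures in the self-boundedness inclusion $Y_a\subseteq\conv\{y^{(1)},\dots,y^{(k)}\}+(\cl Y_a)_\infty$---whose right-hand side is already closed by Corollary~9.1.2 of~\cite{R70}, being a compact polytope plus a closed convex cone---gives $\mathcal G\subseteq\conv\{y^{(1)},\dots,y^{(k)}\}+\mathcal G_\infty$. To collapse the polytope into a single point I exploit that $C\subseteq\mathcal G_\infty$ is solid: picking $c\in\interior C$ with $c+B_\delta\subseteq C$ and $M>0$ large enough that $\|y^{(j)}-y^{(1)}\|\leq M\delta$ for all $j$, the point $q:=y^{(1)}-Mc$ satisfies $y^{(j)}-q\in C\subseteq\mathcal G_\infty$, whence $\mathcal G\subseteq\{q\}+\mathcal G_\infty$. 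The additional assumption $(\cl Y_a)_\infty=\cl C$ forces $\mathcal G_\infty=\cl C$, and boundedness of~\eqref{CVOP} then follows from Lemma~\ref{lemma_self-bd}.

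Part~(3) is the most delicate step and is where the quantity $\delta_c$ enters. Starting from the finite-$\epsilon$-solution inclusion $Y_a\subseteq \conv\proj_y[\bar S]+(\cl Y_a)_\infty+B_\epsilon$, the opening sandwich together with $C\subseteq\mathcal G_\infty$ gives
\[ Y_a\subseteq \conv\Gamma[\bar X]+\mathcal G_\infty+B_\epsilon. \]
The key geometric step is the inclusion $B_\epsilon\subseteq -\tilde\epsilon\{c\}+C$, which is equivalent (after scaling by $1/\tilde\epsilon$ and using that $C$ is a cone) to $c+B_{\epsilon/\tilde\epsilon}\subseteq C$; by the very definition of $\delta_c$ as a supremum, this holds as soon as $\epsilon/\tilde\epsilon<\delta_c$, i.e.\ $\tilde\epsilon>\epsilon/\delta_c$. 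Substituting and absorbing the extra copy of $C$ into $\mathcal G_\infty$ gives $Y_a\subseteq\conv\Gamma[\bar X]+\mathcal G_\infty-\tilde\epsilon\{c\}$. Taking closures and applying Corollary~9.1.2 of~\cite{R70} once more (the compact polytope plus the closed convex cone is closed, and the shift by $-\tilde\epsilon c$ preserves closedness) yields $\mathcal G\subseteq\conv\Gamma[\bar X]+\mathcal G_\infty-\tilde\epsilon\{c\}$, which is precisely the definition of a finite $\tilde\epsilon$-infimizer of the self-bounded problem~\eqref{CVOP}, with self-boundedness of~\eqref{CVOP} supplied by part~(2).

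I expect the main obstacle to be the single-point reduction in part~(2), as this is the only place in the argument that genuinely uses the solid interior of $C$ rather than routine inclusion-chasing. The other delicate spot is the closedness bookkeeping in parts~(2) and~(3): without Rockafellar's Corollary~9.1.2 one would only obtain the statements up to a further closure, which in part~(3) could blur the strict bound $\tilde\epsilon>\epsilon/\delta_c$ when passing from $Y_a$ to $\mathcal G=\cl Y_a$.
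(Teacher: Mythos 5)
Your proof follows essentially the same route as the paper's: the same inclusion chain $\proj_y[\bar S]\subseteq\Gamma[\bar X]+C\subseteq Y_a$ for part (1), the single-point reduction via the solid cone $C\subseteq\mathcal G_\infty$ for part (2) (which the paper isolates as a separate statement, Lemma~\ref{lemma_A1}, with the same $q=y^{(1)}-Mc$ construction), and for part (3) the inclusion $B_\epsilon\subseteq-\tilde\epsilon\{c\}+C$ valid for $\tilde\epsilon>\epsilon/\delta_c$ together with Corollary 9.1.2 of~\cite{R70} to avoid an extra closure. The only detail you omit is the paper's final adjustment in part (3) for the case where~\eqref{CVOP} is actually bounded, in which the defining relation~\eqref{eq16} of a finite $\tilde\epsilon$-infimizer uses $C$ rather than $\mathcal G_\infty=\cl C$ and one needs the additional step $\cl C\subseteq-\tilde\delta\{c\}+C$ for arbitrarily small $\tilde\delta>0$; the strict inequality $\tilde\epsilon>\epsilon/\delta_c$ absorbs this.
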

The following lemma will be used in the proof of Theorem~\ref{thm_sol_CVOP2}.
\begin{lemma}
\label{lemma_A1}
Let $C \subseteq \mathbb{R}^n$ be a solid cone. For any finite collection of points $\{q^{(1)}, \dots, q^{(k)}\} \subseteq \mathbb{R}^n$ there exists a point $q \in \mathbb{R}^n$ such that $q \leq_C q^{(i)}$, i.e.~$q^{(i)} \in q + C$, for all $i = 1, \dots, k$.
\end{lemma}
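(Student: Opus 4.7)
The plan is to exploit solidity of $C$ by choosing a single interior direction and using it to construct a common lower bound. Since $C$ is solid, pick any $c \in \interior C$, and choose $\delta > 0$ with $c + B_{\delta} \subseteq C$. Because $C$ is a convex cone, scaling this inclusion by any $\lambda \geq 0$ gives $\lambda c + B_{\lambda \delta} \subseteq C$.

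From this I would derive the key observation: any $z \in \mathbb{R}^n$ satisfies $\lambda c + z \in C$ whenever $\lambda \geq \|z\|/\delta$. Applying this to each $z = q^{(i)}$, set
\[
\lambda := \max_{i=1,\dots,k}\, \frac{\|q^{(i)}\|}{\delta}, \qquad q := -\lambda c.
\]
Then for every $i$ one has $\lambda \geq \|q^{(i)}\|/\delta$, and hence $q^{(i)} - q = q^{(i)} + \lambda c \in C$, i.e.\ $q \leq_C q^{(i)}$.

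The argument is essentially a one-liner once the absorption property of interior points of a convex cone is recorded; I don't foresee a real obstacle. The only subtlety worth emphasizing is that solidity (non-empty interior) is exactly what makes such a $c$ available and makes the ray $\{-\mu c : \mu \geq 0\}$ cofinal in $(\mathbb{R}^n, \leq_C)$, so finitely many points can be uniformly dominated from below by a single element of this ray. Pointedness of $C$ is not needed for the statement, and the argument does not require $C$ to be closed.
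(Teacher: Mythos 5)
Your proof is correct. It rests on the same geometric fact as the paper's proof --- an interior point $c$ of $C$ together with a ball $c+B_{\delta}\subseteq C$, scaled by the cone property to $\lambda c+B_{\lambda\delta}\subseteq C$ --- but you assemble it differently. The paper proceeds pairwise: for two points it sets $q:=q^{(2)}-\Vert q^{(1)}-q^{(2)}\Vert\, c$ (so that both points lie in $q+\cone(c+B_1)\subseteq q+C$) and then handles $k$ points by induction. You instead anchor at the origin and dominate all $k$ points in one shot by walking far enough down the ray $\{-\mu c:\mu\geq 0\}$, taking $\lambda=\max_i \Vert q^{(i)}\Vert/\delta$; this avoids the induction entirely and is arguably cleaner. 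The only (cosmetic) edge case is $\lambda=0$, i.e.\ all $q^{(i)}=0$, where your $q=0$ works provided $0\in C$; if one wants to avoid relying on that convention, take any $\lambda>\max_i\Vert q^{(i)}\Vert/\delta$ strictly. (The paper's proof has the analogous degenerate case $q^{(1)}=q^{(2)}$.) Your closing remarks are also accurate: neither pointedness nor closedness of $C$ is used in either argument.
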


\begin{proof}
For a solid cone $C$ there exists $c \in \interior C$. Without loss of generality assume that $c$ is scaled in such a way that $\{c\} + B_1 \subseteq C$. The set $\{c\} + B_1$ generates a convex solid cone contained within $C$.

Consider two points $q^{(1)}, q^{(2)} \in \mathbb{R}^n$. Define $q:= q^{(1)} - \Vert q^{(1)} - q^{(2)} \Vert \left( c + \frac{q^{(1)} - q^{(2)}}{\Vert q^{(1)} - q^{(2)} \Vert} \right) = q^{(2)} - \Vert q^{(1)} - q^{(2)} \Vert c$. Then $q^{(1)}, q^{(2)} \in q + \cone (\{c\} + B_1) \subseteq \{q\} + C$. 
For more than two points use an induction argument.
  \end{proof}

\begin{proof}[of Theorem~\ref{thm_sol_CVOP2}.]
\begin{enumerate}
\item For the solution $\bar{S}$ and the set $\bar{X}$ it holds
$
Y_a \subseteq \cl \left( \conv \proj_y [\bar{S}] \right) \subseteq \cl \left( \conv \Gamma [\bar{X}] + C \right).
$
Since the right-hand side is closed and $\mathcal{G} = \cl Y_a$, the set $\bar{X}$ is an infimizer of~\eqref{CVOP}.

\item A self-bounded problem~\eqref{Pv} satisfies $Y_a \neq \mathbb{R}^m$ and there exist $y^{(1)}, \dots, y^{(k)} \in \mathbb{R}^m$ such that
\begin{align*}
\Gamma [\mathcal{X}] + C = Y_a \subseteq \conv \{y^{(1)}, \dots, y^{(k)} \} + (\cl Y_a)_{\infty}.
\end{align*}
Since the set $Y_a$ is convex it follows that also $\mathcal{G} = \cl Y_a  \neq \mathbb{R}^m$.
Since the recession cone $(\cl Y_a)_{\infty}$ contains the solid cone $C$, there exists a point $q \in \mathbb{R}^m$ such that $\conv \{y^{(1)}, \dots, y^{(k)} \} + (\cl Y_a)_{\infty} \subseteq \{q\} + (\cl Y_a)_{\infty}$, see Lemma~\ref{lemma_A1}. As $(\cl Y_a)_{\infty} = \mathcal{G}_{\infty}$ and the shifted cone is closed, we obtain $\mathcal{G} \subseteq \{q\} + \mathcal{G}_{\infty}.$ The second claim follows from Lemma~\ref{lemma_self-bd}.

\item 
The finite $\epsilon$-solution $\bar{S}$ of~\eqref{Pv} satisfies
\begin{align*}
\Gamma [\mathcal{X}] + C = Y_a \subseteq \conv \proj_{y} [\bar{S}] + (\cl Y_a)_{\infty} + B_{\epsilon}.
\end{align*}
By~\eqref{eq19} for all $0 < \delta < \delta_c$ it holds $B_{\epsilon} \subseteq -\frac{\epsilon}{\delta} \{c\} + C$. 
For the set $\conv \proj_y [\bar{S}] $ it holds $\conv \proj_y [\bar{S}] \subseteq \conv \Gamma [\bar{X}] + C$. Since $(\cl Y_a)_{\infty} = \mathcal{G}_{\infty}$ is a convex cone containing $C$, we obtain
\begin{align*}
\Gamma [\mathcal{X}] + C \subseteq \conv \Gamma [\bar{X}] + \mathcal{G}_{\infty} -\frac{\epsilon}{\delta} \{c\}.
\end{align*}
Since $\bar{X}$ is finite, the set $\conv \Gamma [\bar{X}]$ is closed convex with a trivial recession cone. The sets $\conv \Gamma [\bar{X}]$ and $\mathcal{G}_{\infty}$ then fulfill all assumptions of the Corollary 9.1.2 of~\cite{R70} and, therefore, the set $\conv \Gamma [\bar{X}] + \mathcal{G}_{\infty}$ is closed. This gives 
\begin{align*}
\mathcal{G} \subseteq \conv \Gamma [\bar{X}] + \mathcal{G}_{\infty} -\frac{\epsilon}{\delta} \{c\}
\end{align*}
for all $0 < \delta < \delta_c$.
In the self-bounded case this proves the claim. In the bounded case with $\mathcal{G}_{\infty} = \cl C$ the relation $\cl C \subseteq -\tilde{\delta} \{c\} + C$ with an arbitrarily small $\tilde{\delta} > 0$ gives the desired result.
  
\end{enumerate}
\end{proof}

Next we provide an example of (both exact and approximate) solutions of~\eqref{Pv}, which yield infimizers, but not solutions of~\eqref{CVOP}.
\begin{example}
\label{ex_CVOP_sol}
Consider the trivial example of $\min x \text{ s.t. } x \geq 0$ with $\mathcal{G} = \Gamma [\mathcal{X}] = [0, \infty )$ and the associated feasible set $S_a = \{ (x, y) \; \vert \; y \geq x \geq 0 \}$. The set $\bar{S}_1 =  \{ (x, y) \; \vert \; y \geq x > 0 \}$ is a solution of~\eqref{Pv}. The corresponding $\bar{X}_1 = \proj_x \bar{S}_1 = (0, \infty)$ is an infimizer, but not a solution of~\eqref{CVOP}. 

The situation is similar for approximate solutions: Fix $\epsilon > 0$. The set $\bar{S}_2 = \{ (\epsilon, \epsilon)\}$ is a finite $\epsilon$-solution of the convex projection. The set $\bar{X}_2 = \proj_x \bar{S}_2 = \{\epsilon\}$ is a finite $\epsilon$-infimizer, but it does not consist of minimizers.
\end{example}
In the polyhedral case~\cite{LW16} it is possible (under an assumption on the lineality space of $\mathcal{G}$) to obtain a solution of the vector optimization problem by removing non-minimal points from the solution of the associated projection. This is not possible for either the exact solution $\bar{S}_1$ or the approximate solution $\bar{S}_2$ in the above example, as both consist of non-minimal points only. However, for exact solutions, we can formulate conditions under which an (exact) solution of~\eqref{CVOP}  can be constructed from a solution of~\eqref{Pv}, see Lemma~\ref{lemma_construct_sol} below.
It is, however, of theoretical interest rather than of practical use.
\begin{lemma}
\label{lemma_construct_sol}
Assume that a solution of~\eqref{CVOP} exists and let the solution $\bar{S}$ of~\eqref{Pv} satisfy $Y_a = \conv \proj_y [\bar{S}]$. Then, $\bar{X} := \{ x \; \vert \; (x,y) \in \bar{S}, y \not\in \Gamma [\mathcal{X}] + C \backslash \{0\} \}$ is a solution of~\eqref{CVOP}.
\end{lemma}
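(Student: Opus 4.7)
The plan is to check the two defining properties of a solution of~\eqref{CVOP} separately: first, that every element of $\bar{X}$ is a minimizer of~\eqref{CVOP}, and second, that $\bar{X}$ is an infimizer, i.e.~$\mathcal{G} = \cl\conv(\Gamma[\bar{X}] + C)$. The minimizer part is a direct unpacking of the definition of $\bar{X}$, while the infimizer part is where the hypothesis that \emph{some} solution of~\eqref{CVOP} exists will enter.

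For the minimizer property, I would pick $x\in\bar{X}$ together with a witness $(x,y)\in\bar{S}\subseteq S_a$ satisfying $y\notin\Gamma(\mathcal{X})+C\setminus\{0\}$. From $(x,y)\in S_a$ we have $y=\Gamma(x)+c$ for some $c\in C$; were $c\neq 0$, then $y\in\Gamma(\mathcal{X})+C\setminus\{0\}$ (take $x'=x$), contradicting the choice of $(x,y)$. Hence $y=\Gamma(x)$, and the condition $\Gamma(x)\notin\Gamma(\mathcal{X})+C\setminus\{0\}$ is precisely $(\Gamma(x)-C\setminus\{0\})\cap\Gamma[\mathcal{X}]=\emptyset$, which is the definition of $x$ being a minimizer.

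For the infimizer property, the inclusion $\cl\conv(\Gamma[\bar{X}]+C)\subseteq\mathcal{G}$ is immediate from $\bar{X}\subseteq\mathcal{X}$. For the reverse inclusion, I would invoke the assumed solution $\tilde{X}$ of~\eqref{CVOP}, for which $\mathcal{G}=\cl\conv(\Gamma[\tilde{X}]+C)$, and prove the stronger statement $\Gamma[\tilde{X}]\subseteq\conv\Gamma[\bar{X}]$. Given $\tilde{x}\in\tilde{X}$, the hypothesis $Y_a=\conv\proj_y[\bar{S}]$ lets me write $\Gamma(\tilde{x})=\sum\lambda_i y_i$ with $(x_i,y_i)\in\bar{S}$, $y_i=\Gamma(x_i)+c_i$, $c_i\in C$, $\lambda_i\geq 0$, $\sum\lambda_i=1$. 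Setting $x^*=\sum\lambda_i x_i\in\mathcal{X}$, the $C$-convexity of $\Gamma$ gives $\Gamma(\tilde{x})=\Gamma(x^*)+d+\sum\lambda_i c_i$ for some $d\in C$. Minimality of $\tilde{x}$ forces $d+\sum\lambda_i c_i=0$, and pointedness of $C$ then yields $d=0$ and $\lambda_i c_i=0$ for each $i$, so $\Gamma(\tilde{x})=\sum_{\lambda_i>0}\lambda_i\Gamma(x_i)$.

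The main obstacle, and the step I expect to require the most care, is showing that each $x_i$ with $\lambda_i>0$ actually belongs to $\bar{X}$, i.e.~$\Gamma(x_i)\notin\Gamma(\mathcal{X})+C\setminus\{0\}$. I would argue by contradiction: if some $x'\in\mathcal{X}$ and $c'\in C\setminus\{0\}$ satisfy $\Gamma(x')=\Gamma(x_i)-c'$, then replacing $x_i$ by $x'$ in the convex combination produces $\tilde{x}'':=\lambda_i x'+\sum_{j\neq i}\lambda_j x_j\in\mathcal{X}$ for which $C$-convexity yields $\Gamma(\tilde{x}'')\leq_C\Gamma(\tilde{x})-\lambda_i c'$. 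Pointedness of $C$ then gives $\Gamma(\tilde{x})-\Gamma(\tilde{x}'')\in C\setminus\{0\}$, so $\Gamma(\tilde{x}'')\in(\Gamma(\tilde{x})-C\setminus\{0\})\cap\Gamma[\mathcal{X}]$, contradicting minimality of $\tilde{x}$. This substitution-and-contradiction step, combining pointedness of $C$ with $C$-convexity of $\Gamma$, is the technical heart of the proof. With it in hand, $\Gamma(\tilde{x})\in\conv\Gamma[\bar{X}]$, so $\Gamma[\tilde{X}]\subseteq\conv\Gamma[\bar{X}]$ and $\mathcal{G}\subseteq\cl\conv(\Gamma[\bar{X}]+C)$, completing the proof.
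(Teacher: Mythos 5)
Your proposal is correct and follows essentially the same route as the paper: both take the assumed solution of~\eqref{CVOP}, write the image of each of its minimizers as a convex combination of points of $\proj_y[\bar{S}]$ using $Y_a = \conv\proj_y[\bar{S}]$, and use minimality (via the substitution-and-contradiction argument with $C$-convexity and pointedness) to conclude that all points carrying positive weight must lie outside $\Gamma(\mathcal{X}) + C\setminus\{0\}$, hence in $\Gamma[\bar{X}]$. Your write-up simply makes explicit the details (the vanishing of the cone components $c_i$, the minimizer property of $\bar{X}$, and the passage to upper images) that the paper's terser proof leaves implicit.
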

\begin{proof}
Denote $\bar{S}_0 := \{ (x,y) \in \bar{S} \; \vert \; y \not\in \Gamma [\mathcal{X}] + C \backslash \{0\} \}$.
Let $\bar{X}_1 \subseteq \mathcal{X}$ be some solution of~\eqref{CVOP}. 
By Lemma~\ref{lemmaYa} and by assumption it holds
\begin{align}
\label{eq18}
\Gamma [\bar{X}_1] \subseteq \Gamma [\mathcal{X}] \subseteq \Gamma [\mathcal{X}] + C = Y_a = \conv \proj_y [\bar{S}].
\end{align}
We prove that $\Gamma [\bar{X}_1] \subseteq \conv \proj_y [\bar{S}_0]$: Take arbitrary $\bar{x}_1 \in \bar{X}_1$, according to~\eqref{eq18} we have $\Gamma [\bar{x}_1] = \sum\limits_{y_i \in \proj_y [\bar{S}]} \lambda_i y_i$, where $\sum \lambda_i = 1$ and all $\lambda_i \geq 0$. Assuming that for some $\lambda_i > 0$ we have  $y_i \in \Gamma [\mathcal{X}] + C \backslash \{0\}$ contradicts $\bar{x}_1 \in \bar{X}_1$ being a minimizer. The relation $\Gamma [\bar{X}_1] \subseteq \conv \proj_y [\bar{S}_0]$ proves the claim as $\bar{X} = \proj_x [\bar{S}_0]$.
  \end{proof}

Both assumption in Lemma~\ref{lemma_construct_sol} are essential. The condition $Y_a = \conv \proj_y [\bar{S}]$ in effect guarantees that the (exact) solution $\bar{S}$ of~\eqref{Pv} can be reduced to only minimal points. Compare this to Example~\ref{ex_CVOP_sol}. Unfortunately, one cannot not expect a practical version of Lemma~\ref{lemma_construct_sol} for approximate solutions. Since approximate solutions contain an approximation error, the condition $Y_a = \conv \proj_y [\bar{S}]$ is not feasible there. We would need to assume that each element of the approximate solution is either minimal or redundant. Such requirement is, however, almost tautological.

In the polyhedral case, the equivalence between polyhedral projection, multi-objective linear programming and vector linear programming makes it possible to construct a multi-objective problem corresponding to an initial vector linear program. One could do this also in the convex case: combine the associated projection~\eqref{Pv} and the ideas of the previous section to construct the multi-objective problem 
\begin{align}
\label{MOCP2}
\min 
\begin{pmatrix}
y \\
- \trans{\mathbf{1}} y
\end{pmatrix} 
\text{ with respect to } \leq_{\mathbb{R}^{m+1}_+} \text{ subject to } (x, y) \in S_a
\end{align}
associated to~\eqref{CVOP}. As in the linear case, the dimension of the objective space of the multi-objective problem~\eqref{MOCP2} is only one higher than the original problem~\eqref{CVOP}. The results of the last two sections can be combined to establish a connection between~\eqref{CVOP} and~\eqref{MOCP2}. Unfortunately, doing so combines all drawbacks of these results. Most importantly, it involves only (exact or approximate) infimizers of~\eqref{CVOP}, but not solutions. For completeness we list these combined results in the following corollary, the upper image of~\eqref{MOCP2} is denoted $\mathcal{P}_a = \cl (P[S_a] + \mathbb{R}^{m+1}_+)$.
\begin{corollary} $\,$
\begin{enumerate}
\item The upper images of~\eqref{CVOP} and~\eqref{MOCP2} are connected via $\mathcal{P}_a = Q [\mathcal{G}] + \mathbb{R}^{m+1}_+$.

\item Problem~\eqref{CVOP} is self-bounded if and only if \eqref{MOCP2} is self-bounded. Additionally, \eqref{CVOP} is bounded if and only if \eqref{MOCP2} is self-bounded with $(\mathcal{P}_a)_{\infty} = Q[\cl C] + \mathbb{R}^{m+1}_+$.

\item If $\bar{S} \subseteq S_a$ is a solution of~\eqref{MOCP2}, then $\bar{X} := \proj_x [\bar{S}]$ is an infimizer of~\eqref{CVOP}.

\item If $\bar{X} \subseteq \mathcal{X}$ is an infimizer of~\eqref{CVOP}, then $\bar{S} := \{ (x,y) \; \vert \; x \in \bar{X}, y \in \{\Gamma(x)\} + C \}$ is a solution of~\eqref{MOCP2}.

\item If the problem~\eqref{CVOP} is bounded or self-bounded and $\bar{X} \subseteq \mathcal{X}$ is a finite $\epsilon$-infimizer of~\eqref{CVOP}, then $\bar{S} := \{ (x, \Gamma (x)) \; \vert \; x \in \bar{X} \}$ is a finite $ ( \underline{\kappa} \cdot  \epsilon)$-solution of~\eqref{MOCP2}.

\item If~\eqref{MOCP2} is self-bounded and $\bar{S} \subseteq S_a$ is a finite $\epsilon$-solution of~\eqref{MOCP2}, then $\bar{X} := \proj_x [\bar{S}]$ is a finite $\xi$-infimizer of~\eqref{CVOP} for any tolerance $\xi > \dfrac{ \overline{\kappa} }{\delta_c}\cdot \epsilon$.
\end{enumerate}
\end{corollary}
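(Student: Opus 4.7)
The plan is to prove each of the six items by chaining together the results already established: use $\eqref{Pv}$ as the intermediate object between $\eqref{CVOP}$ and $\eqref{MOCP2}$, invoking the $\eqref{CVOP}$-versus-$\eqref{Pv}$ correspondence of Theorems~\ref{thm_sol_CVOP1} and~\ref{thm_sol_CVOP2} together with the $\eqref{CP}$-versus-$\eqref{MOCP}$ correspondence of Proposition~\ref{lemma_self-bounded} and Theorems~\ref{thm_exact_sol},~\ref{thm_CPsolMOCP},~\ref{thm_MOCPsolCP}. No new machinery is needed, so the task is essentially bookkeeping.

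For item (1), I would start from $P[S_a]=QY_a$ (Lemma~\ref{lemma_minimizer}(2)), so that $\mathcal{P}_a=\cl(QY_a+\mathbb{R}^{m+1}_+)$. Applying Lemma~\ref{lemma_A2} replaces $Y_a$ by $\cl Y_a=\mathcal{G}$ inside the closure, yielding $\mathcal{P}_a=\cl(Q\mathcal{G}+\mathbb{R}^{m+1}_+)$. Then the same Corollary~9.1.2 of~\cite{R70} argument used in the proof of Lemma~\ref{lemma_recession} (noting that $Q\mathcal{G}\subseteq\{x\in\mathbb{R}^{m+1}\mid \trans{\mathbb{1}}x=0\}$, so $Q\mathcal{G}\cap(-\mathbb{R}^{m+1}_+)=\{0\}$) shows the sum $Q\mathcal{G}+\mathbb{R}^{m+1}_+$ is already closed, giving equality without closure. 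For item (2), combine Theorem~\ref{thm_sol_CVOP1}(2) and Theorem~\ref{thm_sol_CVOP2}(2) (which together give equivalence of self-boundedness between $\eqref{CVOP}$ and $\eqref{Pv}$, and tie bounded $\eqref{CVOP}$ to $(\cl Y_a)_\infty=\cl C$) with Proposition~\ref{lemma_self-bounded} (which relates self-boundedness of $\eqref{Pv}$ to that of $\eqref{MOCP2}$). For the bounded case, use~\eqref{eq_recession} together with $(\cl Y_a)_\infty=\cl C$ to compute $(\mathcal{P}_a)_\infty=Q(\cl C)+\mathbb{R}^{m+1}_+$, and note conversely that this identity forces $(\cl Y_a)_\infty=\cl C$ via $\proj_{-1}$.

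For items (3)--(6), the argument is a two-step relay through $\eqref{Pv}$. In (3), a solution $\bar S$ of $\eqref{MOCP2}$ is, by Theorem~\ref{thm_exact_sol} applied to the convex projection $\eqref{Pv}$, a solution of $\eqref{Pv}$, and then Theorem~\ref{thm_sol_CVOP2}(1) gives that $\proj_x\bar S$ is an infimizer of $\eqref{CVOP}$. Item (4) is the reverse relay: Theorem~\ref{thm_sol_CVOP1}(1) first turns an infimizer of $\eqref{CVOP}$ into a solution of $\eqref{Pv}$, then Theorem~\ref{thm_exact_sol} lifts it to a solution of $\eqref{MOCP2}$. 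Item (5) applies Theorem~\ref{thm_sol_CVOP1}(3) to get a finite $\epsilon$-solution of $\eqref{Pv}$, then Theorem~\ref{thm_CPsolMOCP} with the $\underline\kappa$-factor; note that Theorem~\ref{thm_CPsolMOCP} requires self-boundedness of $\eqref{Pv}$, which is guaranteed by item (2). Item (6) runs the relay the other way: Theorem~\ref{thm_MOCPsolCP} turns a finite $\epsilon$-solution of $\eqref{MOCP2}$ into a finite $(\overline\kappa\,\epsilon)$-solution of $\eqref{Pv}$, and then Theorem~\ref{thm_sol_CVOP2}(3) applied with error $\overline\kappa\,\epsilon$ produces a finite $\xi$-infimizer of $\eqref{CVOP}$ for any $\xi>\overline\kappa\,\epsilon/\delta_c$.

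The only subtle points are (a) verifying that self-boundedness of $\eqref{MOCP2}$ implies self-boundedness of $\eqref{Pv}$ in item (6) so that Theorem~\ref{thm_MOCPsolCP} applies (which follows from item (2) applied in the other direction: self-bounded $\eqref{MOCP2}$ forces self-bounded $\eqref{CVOP}$ hence self-bounded $\eqref{Pv}$ by Theorem~\ref{thm_sol_CVOP1}(2)), and (b) the closedness verification in item (1). There is no genuine obstacle; the work is just to cite the correct previous statement at each step and to track how $\underline\kappa$, $\overline\kappa$, and $\delta_c$ accumulate in items (5) and (6).
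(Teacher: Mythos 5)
Your proposal is correct and takes exactly the route the paper intends: the corollary is stated without proof precisely because it follows by relaying each claim through the intermediate projection~\eqref{Pv}, combining Theorems~\ref{thm_sol_CVOP1} and~\ref{thm_sol_CVOP2} with Proposition~\ref{lemma_self-bounded}, Theorems~\ref{thm_exact_sol},~\ref{thm_CPsolMOCP},~\ref{thm_MOCPsolCP}, and Lemmas~\ref{lemma_recession} and~\ref{lemmaYa}, which is what you do step by step. The only cosmetic slip is in item~(1), where the hypothesis of Corollary~9.1.2 of~\cite{R70} concerns $(Q\mathcal{G})_{\infty}\cap(-\mathbb{R}^{m+1}_+)=\{0\}$ rather than $Q\mathcal{G}\cap(-\mathbb{R}^{m+1}_+)=\{0\}$; since $(Q\mathcal{G})_{\infty}$ also lies in the hyperplane $\{x\mid\trans{\mathbb{1}}x=0\}$, the argument goes through unchanged.
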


\section{Solution concept according to~\cite{DLSW21}}
\label{appendix_DLSW21}

In the previous sections of this paper we work with the definition of a finite $\epsilon$-solution of a bounded (or self-bounded) convex vector optimization problem from~\cite{LRU14, U18}. The idea there is to shift the (inner) approximation in a fixed direction to cover the full upper images (i.e.~obtain an outer approximation), see~\eqref{eq16}. In \cite{DLSW21}, a slightly different definition of a finite $\epsilon$-solution of a bounded~\eqref{CVOP} is proposed.  Here the idea is to bound the Hausdorff distance between the inner approximation and the upper image by the given tolerance. To distinguish between the two definitions we will speak about \textit{\cite{LRU14}-finite $\epsilon$-solutions} and \textit{\cite{DLSW21}-finite $\epsilon$-solutions}. In this section we revisit our main results, Theorems~\ref{thm_CPsolMOCP} and~\ref{thm_MOCPsolCP} from Section~\ref{sec_solutions} and Theorems~\ref{thm_sol_CVOP1} and~\ref{thm_sol_CVOP2} from Section~\ref{sec_CVOP_CP}, under this alternative definition.

Given a norm $\Vert \cdot \Vert$, the Hausdorff distance between two sets $A_1 \subseteq \mathbb{R}^n$ and $A_2 \subseteq \mathbb{R}^n$ is defined as
\begin{align*}
d_H (A_1, A_2) = \max \left\lbrace \sup_{a_1 \in A_1} \inf_{a_2 \in A_2} \Vert a_1 - a_2 \Vert,  \sup_{a_2 \in A_2} \inf_{a_1 \in A_1} \Vert a_1 - a_2 \Vert\right\rbrace.
\end{align*}
One easily verifies that for sets $A_1 \subseteq A_2$ it holds $d_H  (A_1, A_2) = \sup_{a_2 \in A_2} \inf_{a_1 \in A_1} \Vert a_1 - a_2 \Vert$ and the condition $d_H  (A_1, A_2) \leq \epsilon$ is equivalent to the condition $A_2 \subseteq A_1 + B_{\epsilon}$. The following definition is proposed in~\cite{DLSW21} for a bounded~\eqref{CVOP}: A nonempty finite set $\bar{X}\subseteq \mathcal{X}$ is a \cite{DLSW21}-finite $\epsilon$-infimizer of~\eqref{CVOP} if
\begin{align}
\label{eqA01}
d_H \left( \mathcal{G}, \conv \Gamma [\bar{X}] + C \right) \leq \epsilon.
\end{align}
We now address the connection between these two definitions of finite $\epsilon$-infimizers. This was done in~\cite[Proposition 3.5]{DLSW21} under the Euclidean norm. Here, we adapt that proof to the $p$-norm.
\begin{lemma}
\label{lemma_two_infiizers}
\begin{enumerate}
\item If $\bar{X}$ is a \cite{LRU14}-finite $\epsilon$-infimizer (see Definition~\ref{def_CVOP_sol}) of a bounded problem~\eqref{CVOP}, then it is also a \cite{DLSW21}-finite $\epsilon$-infimizer of~\eqref{CVOP}.

\item If $\bar{X}$ is a \cite{DLSW21}-finite $\epsilon$-infimizer of a bounded problem~\eqref{CVOP}, then it is also a \cite{LRU14}-finite $(k \cdot \epsilon)$-infimizer of~\eqref{CVOP}, where
\begin{align} 
\label{eqA04}
k = \frac{ 1 }{  \min \left\lbrace \trans{w} c \; \vert \;  w \in C^+, \| w \|_q = 1 \right\rbrace }.
\end{align}
Here $q$ is given by $\frac{1}{p} + \frac{1}{q} = 1$ for $p \in (1, \infty)$, respectively $q = \infty$ for $p = 1$ and $q = 1$ for $p = \infty$.
\end{enumerate}
\end{lemma}
\begin{proof}
The first claim trivially follows from the fact that the element $c \in \interior C$ is assumed to be normalized, so~\eqref{eq16} implies $\mathcal{G} \subseteq \conv \Gamma [\bar{X}] + C + B_{\epsilon}$.

We adapt the proof of \cite[Proposition 3.5]{DLSW21} to the case of a $p$-norm, we mainly highlight the changes needed because of the different norm, we refer the reader to~\cite{DLSW21} for details. First, note that H\"older's inequality implies $\vert \trans{w} c \vert \leq  \| w\|_q \| c \|_p$, so $k \geq 1$. The closed convex set $\conv \Gamma [\bar{X}] + C$ admits a representation
\begin{align*}
\conv \Gamma [\bar{X}] + C = \bigcap\limits_{i \in I} \left\lbrace y \in \mathbb{R}^m \vert \trans{w}_i y \geq \gamma_i \right\rbrace
\end{align*}
for some index set $I$, scalars $\gamma_i \in \mathbb{R}$ and vectors $w_i \in C^+ \setminus \{0\}$ that without loss of generality satisfy $\|w_i\|_q = 1$. Take arbitrary $g \in \mathcal{G}$ and define $k_g := \inf \left\lbrace t \geq 1 \right. \vert g + \epsilon t c \in \left. \conv \Gamma [\bar{X}] + C \right\rbrace$. If $k_g = 1$, then $k_g \leq k$ holds trivially. For $k_g > 1$ there exists an index $j \in I$ such that $\trans{w}_j \left( g + \epsilon k_g c \right) = \gamma_j$, which allows us to express $k_g$ as $k_g = \frac{\gamma_j - \trans{w}_j g}{\epsilon \trans{w}_j c}$. Now we show that $\gamma_j - \trans{w}_j g \leq \epsilon$: Since $d_H \left( \mathcal{G}, \conv \Gamma [\bar{X}] + C \right) \leq \epsilon$ holds, there exists $u \in \mathbb{R}^m$ with $\|u \|_p \leq \epsilon$ such that $g + u \in \conv \Gamma [\bar{X}] + C$. Assuming $\gamma_j - \trans{w}_j g >  \epsilon$ would yield $\trans{w}_j \left( g+u \right) \geq \gamma_j > \trans{w}_j g + \epsilon \geq \trans{w}_j g + \|u\|_p \| w_j \|_q$, which contradicts the H\"older's inequality. This yields
\begin{align*}
k_g  \leq \frac{1}{\trans{w}_j c} \leq k,
\end{align*}
which proves the claim.
  \end{proof}

Let us consider the convex projection~\eqref{CP} and the associated multi-objective problem~\eqref{MOCP} in the bounded case. Note that for the multi-objective problem~\eqref{MOCP} 
with $C = \mathbb{R}^{m+1}_+$ and $c = \|\mathbb{1}\|^{-1} \mathbb{1}$ (recall that $\| \cdot \|$ denotes the $p$-norm) we have $ \min \{ \|\mathbb{1}\|^{-1} \trans{w} \mathbb{1} \; \vert \;  w \in \mathbb{R}^{m+1}_+, \| w \|_q = 1 \}  = \|\mathbb{1}\|^{-1}$, which follows by considering the problem $\min \trans{\mathbb{1}} w \text{ s.t. }  \| w \|_q = 1, w \geq 0$. 
Note further, that condition~\eqref{eq5} that defines a finite $\epsilon$-solution of~\eqref{CP} can be equivalently stated as $d_H (Y, \conv \proj_y [\bar{S}]) \leq \epsilon.$ 
Recall that a nonempty finite set $\bar{S} \subseteq S$ is a \cite{LRU14}-finite $\epsilon$-solution of~\eqref{MOCP} if it holds
\begin{align*}
\mathcal{P} \subseteq \conv P[\bar{S}] + \mathbb{R}^{m+1}_+ - \epsilon\{ \Vert \mathbb{1} \Vert^{-1} \mathbb{1}\};
\end{align*}
and a nonempty finite set $\bar{S} \subseteq S$ is a \cite{DLSW21}-finite $\epsilon$-solution of~\eqref{MOCP} if
\begin{align*}
d_H \left( \mathcal{P}, \conv P[\bar{S}] + \mathbb{R}^{m+1}_+ \right) \leq \epsilon.
\end{align*}

First, we revisit the question studied in Theorem~\ref{thm_CPsolMOCP} under the \cite{DLSW21}-solution concept.
We will show in Proposition~\ref{lemma_E1} below that an approximate solution of the projection~\eqref{CP} is also an approximate solution of~\eqref{MOCP} in the \cite{DLSW21} sense, where the multiplier is given by the norm $\Vert Q \Vert$. The value of $\Vert Q \Vert$ is deduced in the following lemma.

\begin{lemma}
\label{lemma_normQ}
The operator norm $\Vert Q \Vert$ of the matrix $Q$ induced by the vector-$p$-norm is $\Vert Q \Vert = \left( m^{p-1} + 1 \right)^{\frac{1}{p}}$ for $p \in [1, \infty)$ and $\Vert Q \Vert = m$ for $p=\infty$.
\end{lemma}
\begin{proof}
The operator norm is defined as $\Vert Q \Vert = \max\limits_{\|x\| = 1} \| Q x \|$, which can be computed by considering~the problem $\max \trans{\mathbf{1}} x \text{ s.t. } \Vert x \Vert = 1$.
  \end{proof}

\begin{proposition}
\label{lemma_E1}
Let the problem~\eqref{CP} be bounded and let $p \in [1, \infty]$ be fixed. If the set $\bar{S} \subseteq S$ is a finite $\epsilon$-solution of~\eqref{CP}, then it is a \cite{DLSW21}-finite $\left( \Vert Q \Vert \cdot \epsilon \right)$-solution of~\eqref{MOCP}. 
\end{proposition}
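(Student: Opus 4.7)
The plan is to translate the defining inclusion $Y \subseteq \conv \proj_y [\bar{S}] + B_\epsilon$ of the finite $\epsilon$-solution of~\eqref{CP} into an analogous inclusion in $\mathbb{R}^{m+1}$ by applying the linear map $Q$, and then to add the ordering cone and take closures to obtain the desired Hausdorff estimate. Concretely, multiplying the inclusion by $Q$ and invoking Lemma~\ref{lemma_minimizer}~(2), i.e.~$QY = P[S]$ and $Q\conv\proj_y[\bar{S}] = \conv P[\bar{S}]$, yields $P[S] \subseteq \conv P[\bar{S}] + Q B_\epsilon$. By definition of the operator norm, $Q B_\epsilon \subseteq B_{\|Q\|\epsilon}$ in $\mathbb{R}^{m+1}$, so that after adding $\mathbb{R}^{m+1}_+$ to both sides one gets
\begin{align*}
P[S] + \mathbb{R}^{m+1}_+ \subseteq \conv P[\bar{S}] + \mathbb{R}^{m+1}_+ + B_{\|Q\|\epsilon}.
\end{align*}

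To move from this to a statement about $\mathcal{P} = \cl(P[S] + \mathbb{R}^{m+1}_+)$, I would argue that the right-hand side is already closed. The set $\conv P[\bar{S}]$ is compact (convex hull of finitely many points), hence has trivial recession cone, and by Corollary~9.1.2 of~\cite{R70} the sum $\conv P[\bar{S}] + \mathbb{R}^{m+1}_+$ is closed; adding the compact set $B_{\|Q\|\epsilon}$ preserves closedness. Thus taking closures on the left gives
\begin{align*}
\mathcal{P} \subseteq \conv P[\bar{S}] + \mathbb{R}^{m+1}_+ + B_{\|Q\|\epsilon}.
\end{align*}
Since $\conv P[\bar{S}] + \mathbb{R}^{m+1}_+ \subseteq \mathcal{P}$ is trivial, this inclusion is equivalent to $d_H\bigl(\mathcal{P}, \conv P[\bar{S}] + \mathbb{R}^{m+1}_+\bigr) \leq \|Q\|\epsilon$, which is exactly the \cite{DLSW21}-finite $(\|Q\|\cdot\epsilon)$-solution property.

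It remains to verify the explicit formula $\|Q\|_p = (m^{p-1}+1)^{1/p}$. For $y \in \mathbb{R}^m$ one has $Qy = (y_1, \dots, y_m, -\trans{\mathbf{1}} y)$, so $\|Qy\|_p^p = \|y\|_p^p + |\trans{\mathbf{1}} y|^p$, and Hölder's inequality with conjugate exponent $q = p/(p-1)$ gives $|\trans{\mathbf{1}} y| \leq \|\mathbf{1}\|_q \|y\|_p = m^{(p-1)/p}\|y\|_p$. This yields $\|Qy\|_p \leq (1 + m^{p-1})^{1/p}\|y\|_p$, with equality attained at $y = m^{-1/p}\mathbf{1}$, confirming the formula (with the natural interpretation for the endpoints $p=1, \infty$).

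The main (minor) obstacle is the closure-and-sum step, which needs Rockafellar's Corollary~9.1.2 to ensure the enclosing set on the right-hand side is closed before passing to $\mathcal{P}$; the remainder is essentially a direct linear-algebraic computation that mirrors the structure of the proof of Theorem~\ref{thm_CPsolMOCP} but dispenses with the direction-dependent normalization in favor of the operator-norm bound.
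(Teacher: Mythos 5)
Your proposal is correct and follows essentially the same route as the paper: both arguments push the $\epsilon$-ball through $Q$ via the operator-norm bound $\Vert Qy - Q\bar y\Vert \leq \Vert Q\Vert\,\Vert y-\bar y\Vert$ and then absorb the cone $\mathbb{R}^{m+1}_+$ and the closure, the only difference being that you phrase the steps as set inclusions (handling the closure by showing the right-hand side $\conv P[\bar S]+\mathbb{R}^{m+1}_+ + B_{\Vert Q\Vert\epsilon}$ is closed), whereas the paper works directly with Hausdorff-distance inequalities and the fact that the closure does not change $d_H$. Your Hölder computation of $\Vert Q\Vert_p = (m^{p-1}+1)^{1/p}$ is a welcome addition, since the paper states this value without deriving it in the proof of this proposition.
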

\begin{proof}
First we show that $d_H \left( \mathcal{P}, \conv P[\bar{S}] + \mathbb{R}^{m+1}_+ \right) \leq d_H \left( P[S], \conv P[\bar{S}] \right)$. Since a closure does not influence the Hausdorff distance, the left-hand side is $d_H \left( P[S] + \mathbb{R}^{m+1}_+, \conv P[\bar{S}] + \mathbb{R}^{m+1}_+ \right)$. Take arbitrary $q \in P[S]$ and $r \in \mathbb{R}^{m+1}_+$. Since the norm satisfies the triangle inequality we have 
\begin{align*}
\inf_{\substack{\bar{q} \in \conv P[\bar{S}],\\ \bar{r} \in \mathbb{R}^{m+1}_+}} \| (q + r) - (\bar{q} + \bar{r}) \| \leq \inf_{\substack{\bar{q} \in \conv P[\bar{S}],\\ \bar{r} \in \mathbb{R}^{m+1}_+}} \| q - \bar{q} \| + \| r - \bar{r} \| \leq \sup_{q \in P[S]} \inf_{\bar{q} \in \conv P[\bar{S}]} \| q - \bar{q} \|.
\end{align*}
As the points $q \in P[S]$ and $r \in \mathbb{R}^{m+1}_+$ are arbitrary, this gives the desired inequality.
Now let $\bar{S}$ be a finite $\epsilon$-solution of~\eqref{CP}. Hence, it satisfies
\begin{align}
\label{eqE03}
\sup_{y \in Y} \inf_{\bar{y} \in \conv \proj_y [\bar{S}]} \Vert y - \bar{y} \Vert \leq \epsilon.
\end{align}
Keep in mind that the sets of interest satisfy $P[S] = Q[Y]$ and $P[\bar{S}] = Q  [\proj_y [\bar{S}]]$ , see Lemma~\ref{lemma_minimizer}~(2) and its proof. The induced matrix norm $\Vert \cdot \Vert : \mathbb{R}^{(m+1) \times m} \rightarrow \mathbb{R}$ is consistent, so this gives us the desired result,
\begin{align*}
d_H \left( \mathcal{P}, \conv P[\bar{S}] + \mathbb{R}^{m+1}_+ \right) &\leq  \sup_{q \in P[S]} \inf_{\bar{q} \in \conv P[\bar{S}]} \| q - \bar{q} \| = \sup_{y \in Y} \inf_{\bar{y} \in \conv \proj_y [\bar{S}]} \| Qy - Q\bar{y} \| \\ 
&\leq \sup_{y \in Y} \inf_{\bar{y} \in \conv \proj_y [\bar{S}]} \| Q\| \cdot \| y -\bar{y} \| = \|Q\| \cdot d_H \left( Y, \conv \proj_y [\bar{S}] \right) \leq \|Q\| \cdot \epsilon.
\end{align*}
  \end{proof}

Theorem~\ref{thm_CPsolMOCP} and Proposition~\ref{lemma_E1} provide us with two different multipliers, one for each solution concept.  The two solution concepts are, however, not unrelated, as was shown in Lemma~\ref{lemma_two_infiizers}. 
With this observation, one could now also apply Theorem~\ref{thm_CPsolMOCP} to a \cite{DLSW21}-type solution and Proposition~\ref{lemma_E1} to a \cite{LRU14}-type solution. The resulting multipliers are listed in Table~\ref{tab3}. One can observe that the multipliers applied directly to the solution concepts used in Theorem~\ref{thm_CPsolMOCP}, respectively in Proposition~\ref{lemma_E1}, are better than those obtained when going through the other solution concept first and applying Lemma~\ref{lemma_two_infiizers}.

\begin{table}
\begin{center}
\caption{\label{tab3}  Multipliers for an approximate solution of~\eqref{CP} to solve~\eqref{MOCP}: 
Theorem~\ref{thm_CPsolMOCP} provides multiplier $\underline{\kappa}$ for the \cite{LRU14}-type solution and Proposition~\ref{lemma_E1} provides multiplier $\| Q \|$ for the \cite{DLSW21}-type solution. Lemma~\ref{lemma_two_infiizers} implies that $\underline{\kappa}$ is a feasible multiplier for the \cite{DLSW21}-type solution and $\| Q \| \cdot k$ is a feasible multiplier for the \cite{LRU14}-type solution, where $k$ is given in \eqref{eqA04}. 
} 
\begin{tabular}{c c c c}
\hline \hline
									& Theorem~\ref{thm_CPsolMOCP}						&		& Proposition~\ref{lemma_E1} \\ \hline
\cite{LRU14}-type solution	& $m^{\frac{p-1}{p}} \cdot (m+1)^{\frac{1}{p}}$		& $<$	& $\left( m^{p-1} + 1 \right)^{\frac{1}{p}} \left( m+1 \right)^{\frac{1}{p}}$ \\
\cite{DLSW21}-type solution		& $m^{\frac{p-1}{p}} \cdot (m+1)^{\frac{1}{p}}$		& $>$	& $\left( m^{p-1} + 1 \right)^{\frac{1}{p}}$ \\ \hline \hline
\end{tabular}
\end{center}
\end{table}

Second, we revisit the question studied in Theorem~\ref{thm_MOCPsolCP}, but now under the \cite{DLSW21}-solution concept. We show in Proposition~\ref{lemma_A2AL} below, that an approximate solution of~\eqref{MOCP} in the \cite{DLSW21} sense is also an approximate solution of the projection~\eqref{CP} and deduce the multiplier. This multiplier was independently obtained in~\cite{LZS21} under the Euclidean norm in a closely related setting.

\begin{proposition}
\label{lemma_A2AL}
Let the problem~\eqref{MOCP} be bounded and let $p \in [1, \infty]$ be fixed. If the set $\bar{S} \subseteq S$ is a \cite{DLSW21}-finite $\epsilon$-solution of~\eqref{MOCP}, then it is a finite $\left( \| Q \| \cdot \epsilon \right)$-solution of~\eqref{CP}. 
\end{proposition}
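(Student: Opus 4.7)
The plan is to adapt the strategy of the proof of Theorem~\ref{thm_MOCPsolCP}: turn the \cite{DLSW21}-type hypothesis into a set containment, intersect with the subspace $Q\mathbb{R}^m$, apply the projection $\proj_{-1}$, and bound the resulting error set by $B_{\|Q\|\epsilon}$.

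Since $\conv P[\bar{S}] + \mathbb{R}^{m+1}_+ \subseteq \mathcal{P}$, the hypothesis $d_H(\mathcal{P},\,\conv P[\bar{S}] + \mathbb{R}^{m+1}_+) \leq \epsilon$ is equivalent to $\mathcal{P} \subseteq \conv P[\bar{S}] + \mathbb{R}^{m+1}_+ + B_\epsilon$. Combining $QY \subseteq \mathcal{P} \cap Q\mathbb{R}^m$ with $\conv P[\bar{S}] = Q\conv\proj_y[\bar{S}] \subseteq Q\mathbb{R}^m$ (Lemma~\ref{lemma_minimizer}) and the elementary identity $(A + B) \cap V = A + (B \cap V)$ for $A \subseteq V$, I would derive
\begin{align*}
QY \subseteq \conv P[\bar{S}] + \bigl( (\mathbb{R}^{m+1}_+ + B_\epsilon) \cap Q\mathbb{R}^m \bigr),
\end{align*}
and then apply $\proj_{-1}$ (using $\proj_{-1} Q = I$) to obtain
\begin{align*}
Y \subseteq \conv \proj_y[\bar{S}] + \proj_{-1}\bigl( (\mathbb{R}^{m+1}_+ + B_\epsilon) \cap Q\mathbb{R}^m \bigr).
\end{align*}

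The core step is showing $\proj_{-1}\bigl((\mathbb{R}^{m+1}_+ + B_\epsilon) \cap Q\mathbb{R}^m\bigr) \subseteq B_{\|Q\|\epsilon}$. A vector $z \in \mathbb{R}^m$ lies in this set iff $Qz \in \mathbb{R}^{m+1}_+ + B_\epsilon$, equivalently iff the componentwise negative part $(Qz)^-$ satisfies $\|(Qz)^-\| \leq \epsilon$. Writing $Qz = (z,\,-\trans{\mathbf{1}}z)$, this expands to
\begin{align*}
\|z^-\|^p + ((\trans{\mathbf{1}}z)^+)^p \leq \epsilon^p,
\end{align*}
where $x^+ = \max(x,0)$ and $x^- = \max(-x,0)$. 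If $z \leq 0$, then $\|z\| = \|z^-\| \leq \epsilon \leq \|Q\|\epsilon$ directly. Otherwise $\operatorname{supp} z^+$ and $\operatorname{supp} z^-$ are disjoint nonempty subsets of $\{1,\ldots,m\}$, so $|\operatorname{supp} z^-| \leq m-1$, and H\"older yields $\trans{\mathbf{1}}z^- \leq (m-1)^{(p-1)/p}\|z^-\|$. Since $\|z^+\| \leq \trans{\mathbf{1}}z^+ = \trans{\mathbf{1}}z + \trans{\mathbf{1}}z^- \leq (\trans{\mathbf{1}}z)^+ + \trans{\mathbf{1}}z^-$, one gets $\|z^+\| \leq (\trans{\mathbf{1}}z)^+ + (m-1)^{(p-1)/p}\|z^-\|$. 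A second H\"older step, applied to the vectors $\bigl(1,\,(m-1)^{(p-1)/p}\bigr)$ and $\bigl((\trans{\mathbf{1}}z)^+,\,\|z^-\|\bigr)$ with conjugate exponent $q = p/(p-1)$, produces $\|z^+\| \leq m^{(p-1)/p}\epsilon$, so that $\|z\|^p = \|z^+\|^p + \|z^-\|^p \leq m^{p-1}\epsilon^p + \epsilon^p = \|Q\|^p \epsilon^p$.

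The main obstacle is this chained H\"older estimate. The naive bound $\|r\| \leq \|r\|_1 = \trans{\mathbb{1}}r = -\trans{\mathbb{1}}b \leq (m+1)^{(p-1)/p}\epsilon$ obtained from $\trans{\mathbb{1}}(r+b) = 0$ would only yield a multiplier of $1 + (m+1)^{(p-1)/p}$, which is strictly worse than $\|Q\|$. Exploiting that $z^+$ and $z^-$ have disjoint supports in $\{1,\ldots,m\}$ shrinks the relevant H\"older constant from $m+1$ to $m$ coordinates, and then chaining with the constraint $\|z^-\|^p + ((\trans{\mathbf{1}}z)^+)^p \leq \epsilon^p$ through the second H\"older step is what delivers the stated factor $\|Q\| = (1 + m^{p-1})^{1/p}$.
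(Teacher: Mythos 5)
Your proposal is correct and reaches the stated multiplier. The outer skeleton is the same as the paper's: convert the Hausdorff hypothesis into the containment $\mathcal{P} \subseteq \conv P[\bar{S}] + \mathbb{R}^{m+1}_+ + B_\epsilon$, intersect with $Q\mathbb{R}^m$, apply $\proj_{-1}$, and bound the residual set. Where you genuinely diverge is in the key estimate. The paper bounds the $(m+1)$-dimensional set $\left( \mathbb{R}^{m+1}_+ + B_{\epsilon} \right) \cap Q\mathbb{R}^m$ by a ball in $\mathbb{R}^{m+1}$ by explicitly solving the optimization problem~\eqref{eqA02} in Lemma~\ref{lemma_A5} (a coordinate-replacement induction identifying the exact maximizer $r^* = \epsilon(m^{\frac{p-1}{p}},0,\dots,0)$, $b^* = \epsilon(0,m^{-\frac{1}{p}},\dots,m^{-\frac{1}{p}})$), and only then projects. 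You instead bound the \emph{projected} set directly in $\mathbb{R}^m$: membership of $z$ is characterized by $\Vert (Qz)^- \Vert \leq \epsilon$, i.e.\ $\Vert z^-\Vert^p + ((\trans{\mathbf{1}}z)^+)^p \leq \epsilon^p$, and two chained H\"older steps (exploiting the disjoint supports of $z^+$ and $z^-$ inside $\{1,\dots,m\}$) give $\Vert z^+\Vert^p \leq m^{p-1}\epsilon^p$ and hence $\Vert z\Vert^p \leq (m^{p-1}+1)\epsilon^p$; all steps check out, including the limiting cases $p=1$ and $p=\infty$. What the paper's route buys is an explicit extremal point, which exhibits sharpness of the radius $\|Q\|\epsilon$ for the pre-projection set; what your route buys is a shorter, self-contained analytic argument that avoids the auxiliary lemma entirely (at the cost of not exhibiting a maximizer — indeed your two partial bounds cannot be tight simultaneously, so the $m$-dimensional set is in fact slightly smaller than $B_{\|Q\|\epsilon}$, which is harmless since only an upper bound is claimed). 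Two cosmetic points: $\operatorname{supp} z^-$ may be empty (the inequality $\vert\operatorname{supp} z^-\vert \leq m-1$ still holds), and the sign in your closing remark should read $\trans{\mathbb{1}}(r-b)=0$.
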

To prove the proposition we will use the following result.
\begin{lemma}
\label{lemma_A5}
The optimization problem
\begin{align}
\label{prob_A01}
\begin{split}
\max\limits_{r, b \in \mathbb{R}^{m+1} } \; & \| r - b\| \\
\text{s.t. } &\|b \| \leq \epsilon, \\
& r \geq 0, \\
& \trans{\mathbb{1}} (r-b) = 0
\end{split}
\end{align}
for $\epsilon > 0$, 
has an optimal objective value $\|r^* - b^* \| =  \epsilon \cdot \Vert Q \Vert$.
\end{lemma}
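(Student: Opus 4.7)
The plan is to verify that the candidate $(r^*, b^*)$ is feasible with the claimed objective value, then establish optimality by a two-stage reduction: first maximize over $r$ for fixed $b$, then over $b$, using convexity of the norm and the power-mean inequality.

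For feasibility, a direct computation gives $\|b^*\|_p^p = m \cdot (\epsilon m^{-1/p})^p = \epsilon^p$, so $\|b^*\|_p = \epsilon$; the nonnegativity $r^* \geq 0$ is immediate, and $\trans{\mathbb{1}}(r^* - b^*) = \epsilon m^{(p-1)/p} - m \cdot \epsilon m^{-1/p} = 0$. The same arithmetic yields $\|r^* - b^*\|_p^p = \epsilon^p m^{p-1} + m \cdot \epsilon^p m^{-1} = \epsilon^p(m^{p-1}+1)$, giving $\|r^*-b^*\|_p = \epsilon (m^{p-1}+1)^{1/p}$.

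For optimality, fix any feasible $b$. Since $r \mapsto \|r-b\|_p$ is convex and the $r$-feasible set $\{r \in \mathbb{R}^{m+1}_+ \mid \trans{\mathbb{1}} r = \trans{\mathbb{1}} b\}$ is either empty (when $\trans{\mathbb{1}} b < 0$), the singleton $\{0\}$ (when $\trans{\mathbb{1}} b = 0$), or a scaled standard simplex with vertices $(\trans{\mathbb{1}} b)\, e^{(i)}$ for $i = 1, \dots, m+1$ (when $\trans{\mathbb{1}} b > 0$), the maximum over $r$ is attained at some vertex. By symmetry of the remaining problem under simultaneous permutation of coordinates, I may take $r = (\trans{\mathbb{1}} b)\, e^{(1)}$, and the objective then depends on $b_1$ only through the constraint $\sum_j b_j \geq 0$; setting $b_1 = 0$ is optimal whenever $\sum_{j \geq 2} b_j \geq 0$, and flipping any negative $b_j$ (for $j \geq 2$) to $|b_j|$ preserves $\sum_{j\geq 2}|b_j|^p$ while enlarging $|\sum_{j\geq 2} b_j|$, so I may also assume $b_j \geq 0$ for $j \geq 2$.

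Under these reductions the objective is $\bigl( (\sum_{j=2}^{m+1} b_j)^p + \sum_{j=2}^{m+1} b_j^p \bigr)^{1/p}$. The power-mean inequality $(\sum_{j=2}^{m+1} b_j)^p \leq m^{p-1} \sum_{j=2}^{m+1} b_j^p$, with equality iff all the $b_j$ coincide, bounds this by $(m^{p-1}+1)\sum_{j\geq 2} b_j^p \leq (m^{p-1}+1)\epsilon^p$. Equality forces $b_j = \epsilon m^{-1/p}$ for all $j \geq 2$ together with $b_1 = 0$, recovering $b^*$ and consequently $r^* = \epsilon m^{(p-1)/p} e^{(1)}$. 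The main obstacle is the joint reduction to vertices and to nonnegative $b_j$, where the equality conditions must be tracked carefully; the power-mean step itself is routine. For the $p = \infty$ regime one replaces the power-mean bound by $\sum_{j=2}^{m+1} b_j \leq m \cdot \max_{j \geq 2} b_j$ and obtains the optimizer in the limit $m^{(p-1)/p} \to m$, $m^{-1/p} \to 1$.
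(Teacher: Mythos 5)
Your proof is correct and reaches the same optimizer by the same two-stage decomposition (inner maximization over $r$ for fixed $b$, then outer maximization over $b$), but the key inner step is argued by a genuinely different route. The paper first reduces to $b\geq 0$ via the substitution $\bar b_i=(b_i)^+$, $\bar r_i=r_i+(b_i)^-$, and then shows that the optimal $r$ concentrates on a single coordinate by a coordinate-merging induction based on the superadditivity inequality $(a+b)^p\geq a^p+b^p$ (the same machinery as in Lemma~\ref{lemma_binomial}), identifying the best coordinate as an $\argmin$ of $b_i$. You instead observe that $r\mapsto\|r-b\|_p$ is convex and the $r$-feasible set is a scaled standard simplex (or empty, or $\{0\}$, cases you correctly dispose of), so the maximum sits at a vertex; the choice of vertex is then absorbed into a permutation symmetry. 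This is shorter and avoids the elementary but fiddly merging argument, at the price of invoking the vertex principle for convex maximization. Your final step also makes explicit, via the power-mean inequality $(\sum_{j\geq 2}b_j)^p\leq m^{p-1}\sum_{j\geq 2}b_j^p$, what the paper leaves as a one-line assertion, and your sign reduction on $b$ (zero out $b_1$, flip negative $b_j$ for $j\geq 2$) is a valid alternative to the paper's up-front positive-part substitution. One point worth flagging: the objective value you compute, $\epsilon\,(m^{p-1}+1)^{1/p}$, is the correct one; the value $\epsilon\,(m^{p-1}+1)$ displayed in the lemma statement (and at the end of the paper's own proof) is missing the exponent $\tfrac{1}{p}$, as confirmed by the paper's use of $(m^{p-1}+1)^{1/p}=\|Q\|$ in Proposition~\ref{lemma_A2AL}. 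A minor caveat: your ``equality iff all $b_j$ coincide'' fails for $p=1$, where the power-mean bound is an identity, but this affects only uniqueness of the maximizer, not the claim.
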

\begin{proof}
First, note that we can restrict ourselves to vectors $b$ satisfying $b \geq 0$. Consider any feasible pair of vectors $(r, b)$. Construct a new pair of vectors $(\bar{r}, \bar{b})$ by $\bar{b}_i = (b_i)^+$ and $\bar{r}_i = r_i + (b_i)^-$ for all $i = 1, \dots, m+1$. The new pair $(\bar{r}, \bar{b})$ is also feasible for the problem and gives the same objective value as the original pair $(r, b)$.  

Second, fix some vector $b \geq 0$. We now solve the problem
\begin{align}
\label{prob_A02}
\begin{split}
\max\limits_{r \in \mathbb{R}^{m+1}} \; & \| r - b \| \\
\text{s.t. } &\trans{\mathbb{1}} r  \leq \trans{\mathbb{1}} b, \\
& r  \geq 0. 
\end{split}
\end{align}
Note that we relaxed the equality constraint to an inequality. However, for the optimal solution the constraint will be satisfied as an equality. Just as problem~\eqref{op} considered within the proof of Theorem~\ref{thm_MOCPsolCP}, problem~\eqref{prob_A02} maximizes a convex objective over a compact polyhedron. Thus, the maximum is attained in a vertice and the optimal solution of problem~\eqref{prob_A02} is $r^* (b) = (\trans{\mathbb{1}} b) \cdot e^{(i^*)}$, where $i^* \in \argmin_{i=1, \dots, m+1} b_i $.

Finally, we return to problem~\eqref{prob_A01}. By our first argument, it suffices to look at nonnegative vectors $b$. According to the above results for problem~\eqref{prob_A02}, problem~\eqref{prob_A01} simplifies to
\begin{align*}
\max \; & \| (\trans{\mathbb{1}} b) \cdot e^{(1)}  - b \| \\
\text{s.t. } &\|b\| \leq \epsilon, \\
& b \geq 0,
\end{align*}
where, without loss of generality, one can assume that the first coordinate of the vector $b$ is the smallest. Consider $p \in [1, \infty)$. 
The objective function $\| (\trans{\mathbb{1}} b) \cdot e^{(1)}  - b \| = \left( \left(\sum\limits_{i=2}^{m+1} b_i \right)^p + \sum\limits_{i=2}^{m+1} b_i^p \right)^{\frac{1}{p}}$ and the constraint $\sum\limits_{i=2}^{m+1} b_i^p \leq \epsilon - b_1^p$ yield an optimal solution $b^* = \epsilon \cdot (0, m^{-\frac{1}{p}}, \dots, m^{-\frac{1}{p}})$. Hence, $r^* = (\trans{\mathbb{1}} b^*) \cdot e^{(1)} = \epsilon \cdot (m^{\frac{p-1}{p}}, 0, \dots, 0)$ and $\|r^* - b^* \| = \epsilon \cdot \left( m^{p-1} + 1 \right)^{\frac{1}{p}} = \epsilon \cdot \Vert Q \Vert$. Finally, consider $p = \infty$. Thanks to non-negativity of $b$ the objective equals $\| (\trans{\mathbb{1}} b) \cdot e^{(1)}  - b \| = \sum\limits_{i=2}^{m+1} b_i$, so an optimal solution is e.g.~$b^* = \epsilon \cdot (0, 1, \dots, 1 )$. Hence, $r^* = \epsilon \cdot m \cdot e^{(1)}$ and $\|r^* - b^* \| = \epsilon \cdot m$.
  \end{proof}

\begin{proof}[Proof of Proposition~\ref{lemma_A2AL}.]
The definition of a \cite{DLSW21}-finite $\epsilon$-solution and $P[S] \subseteq \mathcal{P} \cap Q [\mathbb{R}^m]$ imply
\begin{align*}
P[S] \subseteq \conv P[\bar{S}] + \left( \mathbb{R}^{m+1}_+ + B_{\epsilon} \right) \cap Q [\mathbb{R}^m].
\end{align*}
In order to bound the distance $d_H \left( P[S], \conv P[\bar{S}] \right)$ we need to contain the set $\left( \mathbb{R}^{m+1}_+ + B_{\epsilon} \right) \cap Q [\mathbb{R}^m]$ within a ball. Finding the appropriate radius corresponds to solving problem~\eqref{prob_A01} (use symmetry of the ball $B_{\epsilon}$ to fit the sign convention).
According to Lemma~\ref{lemma_A5}, the optimal objective value of this problem is $\epsilon \cdot \|Q\|$. Thus, one obtains $P[S] \subseteq \conv P[\bar{S}] + B_{\epsilon \cdot  \|Q\|}$. Applying $\proj_{-1}$ on both sides yields $Y \subseteq \conv \proj_y [\bar{S}] + B_{\epsilon \cdot  \|Q\|}$ and thus the desired multiplier.
  \end{proof}

Again, the connection between a \cite{DLSW21}-type solution and a \cite{LRU14}-type solution makes it possible to use Theorem~\ref{thm_MOCPsolCP} and Proposition~\ref{lemma_A2AL} indirectly also for the other type of solution. The resulting multipliers are listed in Table~\ref{tab4}. Again, the direct results of Theorem~\ref{thm_MOCPsolCP} and Proposition~\ref{lemma_A2AL} are better than the results when going through the other solution concept first. 

\begin{table}
\begin{center} 
\caption{\label{tab4}  Multipliers for an approximate solution of~\eqref{MOCP} to solve~\eqref{CP}: 
Theorem~\ref{thm_MOCPsolCP} provides multiplier $\overline{\kappa}$ for the \cite{LRU14}-type solution and Proposition~\ref{lemma_A2AL} provides multiplier $\| Q \|$ for the \cite{DLSW21}-type solution. Lemma~\ref{lemma_two_infiizers} implies that $\overline{\kappa} \cdot k$ is a feasible multiplier for the \cite{DLSW21}-type solution and $\| Q \|$ is a feasible multiplier for the \cite{LRU14}-type solution, where $k$ is given in \eqref{eqA04}. 
} 
\begin{tabular}{c c c c}
\hline \hline
									& Theorem~\ref{thm_MOCPsolCP}						&		& Proposition~\ref{lemma_A2AL} \\ \hline
\cite{LRU14}-type solution	& $\left( \frac{m^p + m - 1 }{m+1} \right)^{\frac{1}{p}}$		& $<$	& $\left( m^{p-1} + 1 \right)^{\frac{1}{p}}$ \\
\cite{DLSW21}-type solution		& $\left( m^p + m - 1 \right)^{\frac{1}{p}}$		& $>$	& $\left( m^{p-1} + 1 \right)^{\frac{1}{p}}$ \\ \hline \hline
\end{tabular}
\end{center}
\end{table}

Finally, let us consider a bounded problem~\eqref{CVOP} and the associated projection~\eqref{Pv} and deduce the results that correspond to Theorem~\ref{thm_sol_CVOP1}~(3.) and Theorem~\ref{thm_sol_CVOP2}~(3.) but for the solution concept from~\cite{DLSW21}.

\begin{proposition}
\begin{enumerate}

\item Let the problem~\eqref{CVOP} be bounded. If $\bar{X} \subseteq \mathcal{X}$ is a \cite{DLSW21}-finite $\epsilon$-infimizer of~\eqref{CVOP}, then $\bar{S} := \{ (x, \Gamma (x)) \vert x \in \bar{X} \}$ is a finite $\epsilon$-solution of the convex projection~\eqref{Pv}.

\item Let the associated convex projection~\eqref{Pv} be self-bounded with $(\cl Y_a)_{\infty} = \cl C$. If $\bar{S}$ is  a finite $\epsilon$-solution of~\eqref{Pv}, then $\bar{X} := \proj_x [\bar{S}]$ is a \cite{DLSW21}-finite $\xi$-infimizer of~\eqref{CVOP} for all $\xi > \epsilon$.

\end{enumerate}
\end{proposition}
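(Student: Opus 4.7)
Both parts rest on the set-level identifications from Lemma~\ref{lemmaYa}, namely $Y_a = \Gamma[\mathcal{X}] + C$ and $\mathcal{G} = \cl Y_a$, combined with the hypothesis-dependent relations $\proj_y[\bar{S}] = \Gamma[\bar{X}]$ in part (1) and $\proj_y[\bar{S}] \subseteq \Gamma[\bar{X}] + C$ in part (2). Note that under the hypothesis of part (2), Theorem~\ref{thm_sol_CVOP2}~(2) forces~\eqref{CVOP} to be bounded, matching the setting of part (1).

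For part (1), because $\conv\Gamma[\bar{X}] + C \subseteq \mathcal{G}$, the \cite{DLSW21}-finite $\epsilon$-infimizer property is equivalent to the set inclusion $\mathcal{G} \subseteq \conv\Gamma[\bar{X}] + C + B_{\epsilon}$. The construction $\bar{S} = \{(x,\Gamma(x)) : x \in \bar{X}\}$ gives $\proj_y[\bar{S}] = \Gamma[\bar{X}]$, and Theorem~\ref{thm_sol_CVOP1}~(2) yields $(\cl Y_a)_\infty = \cl C \supseteq C$. Combining these with $Y_a \subseteq \mathcal{G}$ produces
\[
Y_a \subseteq \conv\proj_y[\bar{S}] + (\cl Y_a)_\infty + B_\epsilon,
\]
which is precisely the defining condition~\eqref{eq13} of a finite $\epsilon$-solution of the self-bounded~\eqref{Pv}.

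For part (2), starting from the finite $\epsilon$-solution of the self-bounded~\eqref{Pv} and substituting $(\cl Y_a)_\infty = \cl C$ gives $Y_a \subseteq \conv\proj_y[\bar{S}] + \cl C + B_\epsilon$. Each $(x,y) \in \bar{S} \subseteq S_a$ satisfies $y \in \Gamma(x) + C$, so convexity of $C$ yields $\conv \proj_y[\bar{S}] \subseteq \conv\Gamma[\bar{X}] + C$, and together with $C + \cl C = \cl C$ one obtains $Y_a \subseteq \conv\Gamma[\bar{X}] + \cl C + B_\epsilon$. Since $\bar{X}$ is finite, $\conv\Gamma[\bar{X}] + B_\epsilon$ is compact, so the right-hand side (compact plus closed) is itself closed, and taking the closure gives $\mathcal{G} = \cl Y_a \subseteq \conv\Gamma[\bar{X}] + \cl C + B_\epsilon$. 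Finally, solidity of $C$ together with the normalized direction $c \in \interior C$ gives $\cl C \subseteq -\tilde{\delta} c + C$ for any $\tilde{\delta} > 0$ (as in the proof of Lemma~\ref{lemma_self-bd}); absorbing the shift into the ball yields
\[
\mathcal{G} \subseteq \conv\Gamma[\bar{X}] + C + B_{\epsilon + \tilde{\delta}}.
\]
Setting $\tilde{\delta} = \xi - \epsilon$ for any $\xi > \epsilon$ and using $\conv\Gamma[\bar{X}] + C \subseteq \mathcal{G}$ translates this into $d_H(\mathcal{G}, \conv\Gamma[\bar{X}] + C) \leq \xi$, as desired.

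The main obstacle is this final $\cl C$-to-$C$ bridge in part (2). Since $C$ is only required to be solid, pointed, and convex (and a priori not closed), the set $\conv\Gamma[\bar{X}] + C$ may fail to be closed, and this is precisely what forces the strict inequality $\xi > \epsilon$ in the conclusion rather than the cleaner $\xi = \epsilon$: every preceding step is closure-stable and, if $C$ were already closed, would yield $\xi = \epsilon$ directly.
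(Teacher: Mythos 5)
Your proposal is correct and follows essentially the same route as the paper: part (1) is the same direct chain of inclusions via $Y_a \subseteq \mathcal{G}$, $\proj_y[\bar{S}] = \Gamma[\bar{X}]$ and $C \subseteq (\cl Y_a)_\infty$, and part (2) uses the same ingredients ($Y_a = \Gamma[\mathcal{X}]+C$, $\proj_y[\bar{S}] \subseteq \Gamma[\bar{X}]+C$, and $\cl C \subseteq -\delta c + C$ from the proof of Lemma~\ref{lemma_self-bd}). The only difference is a minor reordering in part (2): you keep $\cl C$ until after closing $Y_a$, so that compact-plus-closed gives closedness of the right-hand side for free and a single slack $\tilde{\delta}$ suffices, whereas the paper trades $\cl C$ for $C$ first and therefore needs two infinitesimal increments $\delta_1, \delta_2$ — the same argument, slightly streamlined.
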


\begin{proof}
\begin{enumerate}

\item A \cite{DLSW21}-finite $\epsilon$-infimizer $\bar{X}$ satisfies
\begin{align*}
\mathcal{G} \subseteq \conv \Gamma [\bar{X}] + C + B_\epsilon.
\end{align*}
Since $Y_a \subseteq \mathcal{G}, C \subseteq (\cl Y_a)_{\infty}$ and $\proj_x [\bar{S}] = \Gamma [\bar{X}]$, it follows that
\begin{align*}
Y_a \subseteq \conv \proj_x [\bar{S}] + (\cl Y_a)_{\infty} + B_\epsilon.
\end{align*}

\item The assumption implies a bounded problem~\eqref{CVOP}, see Theorem~\ref{thm_sol_CVOP2}~(2). A finite $\epsilon$-solution $\bar{S}$ satisfies
\begin{align*}
Y_a \subseteq \conv \proj_y [\bar{S}] + (\cl Y_a)_{\infty} + B_\epsilon.
\end{align*}
Since $Y_a = \Gamma [\mathcal{X}] + C$ and $\proj_y[\bar{S}] \subseteq \Gamma [\bar{X}] + C$ we get 
\begin{align*}
\Gamma [\mathcal{X}] + C \subseteq \conv \Gamma [\bar{X}] + \cl C + B_\epsilon.
\end{align*}
For a solid cone $C$ it holds $\cl C \subseteq -\delta_1 c + C$ for  a (without loss of generality normalized) interior point $c \in \interior C$ and any $\delta_1 > 0$. This gives us $\Gamma [\mathcal{X}] + C \subseteq \conv \Gamma [\bar{X}] +  C - \delta_1 \{c\} + B_\epsilon \subseteq \conv \Gamma [\bar{X}] +  C  + B_{\epsilon+\delta_1}$ for an arbitrarily small $\delta_1 > 0$. Applying the closure, we obtain $\mathcal{G} \subseteq \cl \left( \conv \Gamma [\bar{X}] + C + B_{\epsilon+\delta_1} \right) $ and an additional arbitrarily infinitesimal increase by $\delta_2 > 0$ of the tolerance to $\xi = \epsilon + \delta_1 + \delta_2$ covers the closure. Since both $\delta_1$ and $\delta_2$ are arbitrarily small, any tolerance $\xi > \epsilon$ is achievable.

\end{enumerate}
\end{proof}

\section*{Acknowledgement}
We would like to thank Andreas L\"ohne for bringing our attention to the projection problem and for pointing out the connection between parametrized linear vector optimization problems and polyhedral projections during a research visit at the Institute for Statistics and Mathematics
at Vienna University of Economics and Business in Summer 2016. Furthermore, we would like to thank him for a very helpful hint concerning the proof of Lemma~\ref{lemma_recession}. 
The authors would like to thank two anonymous referees for useful comments and suggestions that helped improving the manuscript.

\bibliographystyle{alpha}
\bibliography{biblioCP}

\end{document}